\documentclass[11pt, notitlepage]{article}
\usepackage{amssymb,amsmath,comment}
\usepackage{amsthm}
\catcode`\@=11 \@addtoreset{equation}{section}
\def\thesection{\arabic{section}}

\def\theequation{\thesection.\arabic{equation}}
\catcode`\@=12
\usepackage{colortbl}
\usepackage{hyperref}
\usepackage[mathscr]{eucal}
\usepackage{epsf}
\usepackage{esint}
\usepackage{a4wide}
\def\R{\mathbb{R}}

\newcommand{\e}{\varepsilon}

\newcommand{\al} {\alpha}
\newcommand{\ba} {\beta}
\newcommand{\de} {\delta}

\newcommand{\Om} {\Omega}
\newcommand{\ra} {\rightarrow}

\newcommand{\De} {\Delta}
\newcommand{\la} {\lambda}
\newcommand{\La} {\Lambda}
\newcommand{\noi} {\noindent}
\newcommand{\na} {\nabla}

\newcommand{\mb} {\mathbb}
\newcommand{\mc} {\mathcal}

\newcommand{\ld} {\langle}
\newcommand{\rd} {\rangle}
\newcommand{\I}{\int\limits_}

\setcounter{page}{1}\pagestyle{myheadings}\markboth{\small } {\small Mixed local-nonlocal singular problem}
\usepackage[all]{xy}
\catcode`\@=11
\def\theequation{\@arabic{\c@section}.\@arabic{\c@equation}}
\catcode`\@=12

\newtheorem{Theorem}{Theorem}[section]
\newtheorem{Lemma}[Theorem]{Lemma}
\newtheorem{prop}[Theorem]{Proposition}

\newtheorem{Remark}[Theorem]{Remark}
\newtheorem{Definition}[Theorem]{Definition}

\begin{document}

{\vspace{0.01in}}

\title{Symmetry, existence and regularity results for a class of mixed local-nonlocal semilinear singular elliptic problem via variational characterization}

\author{ {\bf Gurdev Chand Anthal\footnote{	 Indian Institute of Science Education and Research Berhampur, Berhampur, Ganjam, Odisha, India-760010. email: Gurdevanthal92@gmail.com}\; and Prashanta Garain\footnote{Indian Institute of Science Education and Research Berhampur, Berhampur, Ganjam, Odisha, India-760010.
					email: pgarain92@gmail.com}  }}

\maketitle

\begin{abstract}\noindent
In this article, we present the symmetry of weak solutions to a mixed local-nonlocal singular problem. We also establish results related to the existence, nonexistence, and regularity of weak solutions to a mixed local-nonlocal singular jumping problem. A crucial element in proving our main results is the variational characterization of the solutions, which also reveals the decomposition property. This decomposition property, together with comparison principles and the moving plane method, yields the symmetry result. Additionally, we utilize nonsmooth critical point theory alongside the variational characterization to analyze the jumping problem.

\end{abstract}

\maketitle

\noi {Keywords: Mixed local-nonlocal singular problem, variational characterization, decomposition, comparison principles, moving plane method, symmetry, jumping problem, existence, regularity.}

\noi{\textit{2020 Mathematics Subject Classification: 35M10, 35J75, 35B06, 35B25, 35B65, 35R11.}

\bigskip

\tableofcontents

\section{Introduction}\label{Intro}
We consider the following class of mixed local-nonlocal semilinear elliptic equation with singular nonlinearity
$$
(P_{\gamma,w}): \quad \quad \quad \quad \mathcal{M}u:=-\Delta u+(-\Delta)^s u=u^{-\gamma}+w\text{ in }\Om,\quad u>0\text{ in }\Om,\quad u=0\text{ in }\mb R^n\setminus\Om, 
$$
where $\Om\subset\mb R^n$, $n\geq 3$ is a a bounded $C^{1,1}$ domain and $\gamma>0$. Here $\Delta$ is the classical Laplace operator and $(-\Delta)^s$ is the fractional Laplace operator defined by
$$
(-\Delta)^s u(x)=\text{P.V.}\int_{\mb R^n}\frac{(u(x)-u(y))}{|x-y|^{n+2s}}\,dy,\quad 0<s<1
$$
where P.V. denotes the principal value. The operator $\mathcal{M}:=-\Delta+(-\Delta)^s$ is referred to as the mixed local-nonlocal operator, see \cite{Valap} for its physical applications.\\
In the first part of this article, we establish symmetry of weak solutions of the problem $(P_{\gamma,w})$, where $w$ takes the form $(\mc H_1)$ given by
\begin{itemize}
\item[$(\mc H_1)$] $w=\wp(u)$, where $\wp$ satisfies the hypothesis $(\mc A)$ below:
	 	\item[$(\mc A)$] $\wp(\cdot)$~\text{is locally Lipschitz continuous, non-decreasing},\,$\wp(t) >0~\text{for}~t>0~\text{and}~\wp(0) \geq 0$.
	\end{itemize}
The second part of this article is devoted to study the existence, non-existence and regularity of weak solutions of the problem $(P_{\gamma,w})$, where $w$ takes the form $(\mc H_2)$ given by
     \begin{itemize}
\item[$(\mc H_2)$] $w=h(x,u)-\la e_1$, where $\la\in\R$ and $h:\Om\times\mb R\to\mb R$ is a Carath\'eodory function satisfying the conditions $(h_1)$ and $(h_2)$ below:
	 			\item [$(h_1)$] there exists a constant $C>0$ such that
	 			\begin{align*}
	 				|h(x,s)| \leq C(1+|s|)\quad ~\text{for}~x \in \Om~\text{and every}~s \in \mb R,
	 			\end{align*}
	 			\item [$(h_2)$] there exists $\alpha \in \mb R$ such that
	 			\begin{align*}
	 				\lim_{s \ra +\infty} \frac{h(x,s)}{s} =\alpha\quad ~\text{for}~x\in \Om.
	 			\end{align*}
	 		\end{itemize}
             Here $e_1\in H_0^{1}(\Omega)\cap L^\infty(\Omega)\,(e_1>0\text{ in }\Omega)$ is the first eigenfunction of $\mathcal{M}$ in $\Omega$, with the associated first eigenvalue $\lambda_1$ (refer to \cite[Theorem B.1]{SVWZ}, \cite[Theorems 2.3 and 2.4]{GUjms}), which satisfies
             \begin{equation}\label{mevp}
\mc M e_1=\la_1 e_1\text{ in }\Om,\quad e_1>0\text{ in }\Om,\quad e_1=0\text{ in }\mb R^n\setminus\Om.
             \end{equation}
             The singularity of the problem $(P_{\gamma,w})$ is captured by the positivity of the singular exponent $\gamma>0$, which leads to the blow-up behavior of the nonlinearity on the right-hand side of $(P_{\gamma,w})$. Singular elliptic problems have been thoroughly investigated over the past three decades, in both the local case \cite{Arcoyana, BocOr, CRT, Radu, LazerMc, SMathZ} and the nonlocal case \cite{Adijde, Peralopen, CaninoSc, GMuk, YGcvpde}, along with the references mentioned therein.
             
            Regarding symmetry results, in the local case, we highlight the work \cite{CGS}, where the authors established the symmetry of positive classical solutions to the following singular Laplace equation:
             \begin{equation}\label{Canjde}
                 -\Delta u=u^{-\gamma}+\wp(u)\text{ in }\Om,\quad u>0\text{ in }\Om,\quad u=0\text{ on }\partial\Om,
             \end{equation}
             where $\Om\subset\mb R^n$ is a bounded smooth, strictly convex, symmetric domain and $\wp$ satisfies the hypothesis $(\mc A)$ mentioned earlier. Symmetry results for a more general version of equation \eqref{Canjde} are explored in \cite{BalJac, Caninona, Sjfa, Tans} and the references therein. Furthermore, in the nonlocal case, the authors in \cite{Jacsym} proved symmetry result for the following singular fractional Laplace equation:
             \begin{equation}\label{Jacetal}
                 (-\Delta)^s u=u^{-\gamma}+\wp(u)\text{ in }B_r(0),\quad u>0\text{ in }B_r(0),\quad u=0\text{ in }\mb R^n\setminus B_r(0),
             \end{equation}
             where $B_r(0)\subset\mb R^n$ is the ball of radius $r$ centered at the origin $0=(0,0,\ldots,0)\in\mb R^n$.

             Related to the jumping problem, in the local case, the singular Laplace equation
              \begin{equation}\label{Lapjum}
             -\Delta u=f(x)u^{-\gamma}+h(x,u)-t\phi_1\text{ in }\Om,\quad u>0\text{ in }\Om,\quad u=0\text{ on }\partial\Om,
             \end{equation}
             where $\phi_1$ is the first eigenfunction of $-\Delta$ in a bounded $C^{1,1}$ domain $\Om\subset\mb R^n$ with the Dirichlet boundary condition is studied in \cite{Caninojumping} where $f\equiv 1$ in $\Om$, and $h:\Om\times\mb R\to\mb R$ is a Careth\'eodory function satisfying $(h_1)$ and $(h_2)$. Furthermore, equation \eqref{Lapjum} is studied for general $f$ and $h$ in \cite{Canino14}. In addition, the following nonlocal jumping problem
              \begin{equation}\label{fLapjum}
             (-\Delta)^s u=u^{-\gamma}+g(x,u)-t\psi_1\text{ in }\Om,\quad u>0\text{ in }\Om,\quad u=0\text{ on }\mb R^n\setminus\Om,
             \end{equation}
             where $0<s<1,\,n>2s,\,t\in\mb R,\,\gamma>0$, and $\psi_1$ is the first eigenfunction of $(-\Delta)^s$ in a bounded smooth domain $\Om\subset\mb R^n$ with the Dirichlet boundary condition, is studied in \cite{Caninodcds}. Here $g:\Om\times\mb R\to\mb R$ is some Carath\'eodory function satisfying certain growth conditions.

In the mixed local-nonlocal case, non-singular problems have been investigated in \cite{Chen, Biagicpde, Min, Foondun,  GK,  GLcvpde, Var23}, as well as in the references therein. Recently, the study of mixed local-nonlocal singular problems has garnered significant attention. In this regard, we refer to \cite{AroRad}, where the authors explored the following purely singular mixed local-nonlocal problem, focusing on existence and regularity results:
             \begin{align}\label{pseqn}
             \begin{cases}
					\mc M u = f(x){u^{-\gamma}}~&\text{in}~\Om,\\
                    u>0~&\text{in}~\Om,\\
					u =0~&\text{in}~\mb R^n \setminus \Om,
				\end{cases}
			\end{align}
            where $\Om\subset\mathbb{R}^n$ is a bounded $C^{1,1}$ domain. Here $\gamma>0$ and $f:\Om\to\mathbb{R}^+$ either belong to $L^r(\Om)\setminus\{0\}$ for some $r\geq 1$, or exhibits growth corresponding to negative powers of the distance function near the boundary. Additionally, in \cite{GUna}, the authors examined the quasilinear version of the problem \eqref{pseqn}, proving existence, uniqueness, and symmetry results for any $\gamma>0$, under the assumption that $f\in L^r(\Omega)\setminus\{0\}$ is a non-negative function for some $r\geq 1$. We also refer to \cite{Gjms, LzMc} for studies of purely singular mixed local-nonlocal problems. Furthermore, purely singular mixed local-nonlocal problems with variable singular exponents have been investigated in \cite{Biroud, GKK} and the references therein.
            
            In the purturbed singular mixed local-nonlocal case, consider the following mixed local-nonlocal problem: 
            \begin{equation}\label{mspp}
            \mc Mu=\la u^{-\gamma}+u^q\text{ in }\Om,\quad u>0\text{ in }\Om,\quad u=0\text{ in }\mb R^n\setminus\Om,
            \end{equation}
            where $\Om\subset\mb R^n$ is a bounded smooth domain, $\la>0$, $\gamma\in(0,1)$, and $q\in (1,2^*-1]$ with $2^*=\frac{2n}{n-2}$ if $n>2$. Multiplicity results for a certain range of $\la>0$ are obtained in the subcritical case $q\in(1,2^*)$ in \cite{Garainjga}, and in the critical case $q=2^*$ in \cite{BV} respectively. We remark that, the case of any $\gamma>0$ is addressed in \cite{BalDas}, where the authors proved multiplicity results for the problem \eqref{mspp} for a certain range of $\la>0$, assuming $\Om$ to be a strictly convex bounded domain in $\mb R^n$ with $s\in(0,\frac{1}{2})$ and $q\in(1,2^*-1)$. Moreover, when $\gamma\in(0,1)$, multiplicity results for the associated quasilinear problem of \eqref{mspp} are established for some range of $\la>0$ in the subcritical case in \cite{BalDas}, for any bounded $C^1$ domain in $\mb R^n$. Recently, mixed local-nonlocal singular problems with measure data have also been studied, with results in \cite{BG24} for a constant singular exponent and in \cite{BGccm} for a variable singular exponent.
            
            To the best of our knowledge, symmetry results for perturbed singular mixed local-nonlocal problems are not yet known, and the singular jumping problem in the mixed local-nonlocal case has not been studied. The primary goal of this article is to address these gaps.

Due to the singularity, one of the main challenges we encounter is that, in general, solutions to mixed local-nonlocal singular problems belong to the local Sobolev space, as discussed in \cite{AroRad} and the references therein. As a result, the standard variational method cannot be directly applied in our setting. We address this difficulty by establishing a variational characterization of the solutions to the problem $(P_{\gamma,w})$ assuming $w\in H^{-1}(\Om)$ (see Theorem \ref{t1.1}). To achieve this, we adapt the approaches from \cite{CD, CMST} to the mixed local-nonlocal singular problem. Furthermore, we demonstrate that the solutions to this variational inequality are minimizers of a suitable functional (see Theorem \ref{t3.2}). Recently, minimax principles for hemivariational-variational inequalities have been studied in \cite{Rep24, Rep22} and the references therein. Additionally, we mention the recent works \cite{Lei1, Lei2} on singular elliptic problems, where variational characterization in terms of lower critical points \cite{Marco83, Marcoampa} is used. 

To establish the symmetry result, we primarily employ the moving plane technique \cite{BVDDS}. However, it is important to note that because the nonlinearity $u^{-\gamma}+\wp(u)$ is not Lipschitz at the origin, this technique cannot be applied directly. To overcome this, we adopt the approach introduced in \cite{CGS}, which combines decomposition and the moving plane method.  Specifically, by Theorem \ref{t1.2} (which follows from Theorem \ref{t1.1}), every weak solution $u$ of $(P_{\gamma,w})$ with $\gamma>0$ and $w$ of the form $(\mc H_1)$ can be decomposed as $u=u_0+z$, where $u_0$ is in the local Sobolev space, which is a solution of a purely singular mixed local-nonlocal problem and $z$ is a Sobolev function taking zero boundary value. Thus, to prove the symmetry of $u$, it suffices to establish the symmetry of $u_0$ and of $z$. This will primarily be achieved using the moving plane technique. However, in order to establish the symmetry of $z$, we first prove some comparison principles for $z$, which will allow us to apply the moving plane technique.

             To study the jumping problem $(P_{\gamma,w})$ when $w$ takes the form $(\mc H_2)$, we mainly apply the nonsmooth critical point theory as developed in \cite{S} and combine the approaches from \cite{Caninojumping, Caninodcds} to address the mixed local-nonlocal setting. We demonstrate that these critical points are indeed the weak solutions by showing that they satisfy a specific variational inequality. To this end, the variational characterization results in Theorem \ref{t3.2} and \ref{t1.1} are crucial.
\subsection{Notations}
We will use the following notations throughout the remainder of the article, unless stated otherwise.
	\begin{enumerate}
		\item $d\nu =|x-y|^{-n-2s}\,dxdy$,\quad $\mathcal{B}(u)(x,y)=u(x)-u(y)$.
        
        \item $\gamma$ will denote a positive constant and $0<s<1$. 

  \item $\Omega$ will denote a bounded $C^{1,1}$ domain in $\mathbb{R}^n$, $n\geq 3$.

  \item For a given space $W$ and a given subset $S$ of $\mb R^n$, we denote by $W_c(S)$ to mean the set of functions in $W(S)$ that have compact support within $S$.

  \item For bounded subsets $U, V$ of $\mathbb{R}^n$, we denote $V\Subset U$, to mean that $V\subset\overline{V}\subset U$.

  \item For $u\in H_0^{1}(\Om)$, we use the notation $\|u\|$ to mean
  $$
  \|u\|:=\|u\|_{H_0^{1}(\Om)}=\left(\I\Om |\na u|^2 dx +\iint_{\mb R^{2n}}{|\mc B u(x,y)|^2}\,d\nu\right)^\frac12.
  $$

  \item We denote $(\mc P_{\gamma,0})$ to mean the problem $(\mc P_{\gamma,w})$ after putting $w=0$.
  
  \item For a given real-valued function $f$ defined on a set $S$ of $\mb R^n$ and given constants $c,d$, we write $c\leq f\leq d$ on $S$ to mean that $c\leq f\leq d$ on $S$ almost everywhere in $S$.
       \end{enumerate}

	\section{Functional setting, auxiliary results and main results}
	\subsection{Functional setting}
In this subsection, we outline the functional setting.
     {The Sobolev space $H^1(\Omega)$ consists of functions $u:\Omega\to\mathbb{R}$ that belong to $L^2(\Omega)$, for which the partial derivatives $\frac{\partial u}{\partial x_i}$ (for $1\leq i\leq n$) exist in the weak sense and are elements of $L^2(\Omega)$. The space $H^1(\Omega)$ is a Banach space (see \cite{Evans}) with the norm defined as:
\begin{align*}
\|u\|_{H^1(\Om)} =\Big(\|u\|_{L^2(\Om)}^2 +\|\na u\|_{L^2(\Om)}^2\Big)^\frac{1}{2}
\end{align*}
where $\na u=\Big(\frac{\partial u}{\partial x_1},\ldots,\frac{\partial u}{\partial x_n}\Big)$. The space $H^1_{\mathrm{loc}}(\Omega)$ is defined as:
  \begin{align*}
	 	H^1_{\text{loc}}(\Om)=\{u: \Om \ra \mb R: u \in H^1(K)~\text{for all}~ K \Subset \Om\}.
	 \end{align*}
  The space $H^1(\mathbb{R}^n)$ is defined analogously. {Occasionally, we may also require the higher-order Sobolev space $W^{2,p}(\Om)$ with $1\leq p<\infty$ as well, which is the standard Sobolev space. For a detailed definition and more information, we refer to \cite{Evans}.}
  To address mixed problems, we define the space
  $$
  H_0^1(\Om)=\{u\in H^1(\mathbb{R}^n):u=0\text{ in }\mathbb{R}^n\setminus\Omega\}.
  $$
	 Next, we recall the concept of fractional order Sobolev spaces from \cite{DPV}. For $s \in (0,1)$, the fractional Sobolev space $H^s(\Om)$ is defined as
	 	\begin{align*}
	 		H^s(\Om) =\left\{u \in L^2 (\Om): \frac{|u(x)-u(y)|}{|x-y|^{\frac{n}{2}+s}} \in L^2(\Om \times \Om)\right\}
	 	\end{align*}
	 	and it is endowed with the norm
	 	\begin{align*}
	 		\|u\|_{H^s(\Om)} =\Big(\|u\|_{L^2(\Om)}^2 +[u]_{s,\Om}^2\Big)^\frac{1}{2},
	 	\end{align*}
	 	where
	 	\begin{equation*}
	 		[u]_{s,\Om} =\left( \I{\Om}\I{\Om}{|\mc B(u)(x,y)|^2}\,d\nu\right)^\frac12.
	 	\end{equation*}
        Similarly, we define
        \begin{equation*}
	 		[u]_{s,\mb R^n} =\left( \iint_{\mb R^{2n}}{|\mc B(u)(x,y)|^2}\,d\nu\right)^\frac12.
	 	\end{equation*}
   The space $H^s(\mathbb{R}^n)$ is defined in a similar manner. The following result demonstrates that the Sobolev space $H^1(\Om)$ is continuously embedded within the fractional Sobolev space, as shown in \cite[Proposition 2.2]{DPV}.
	 \begin{Lemma}\label{l2.2}
	 	There exists a constant $C =C(n,p,s)>0$ such that
	 	\begin{align*}
	 		\|u \|_{H^s(\Om)} \leq C \|u\|_{H^1(\Om)},\quad \forall u \in H^1(\Om).
	 	\end{align*}
	 \end{Lemma}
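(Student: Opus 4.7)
The plan is to control the two pieces of $\|u\|_{H^s(\Om)}^2 = \|u\|_{L^2(\Om)}^2 + [u]_{s,\Om}^2$ by $\|u\|_{H^1(\Om)}^2$. The $L^2$-part is already built into the right-hand side, so everything reduces to bounding the Gagliardo seminorm $[u]_{s,\Om}^2 = \iint_{\Om\times\Om}|u(x)-u(y)|^2 |x-y|^{-n-2s}\,dx\,dy$. I split the integration region based on whether $|x-y|\geq 1$ or $|x-y|<1$, since the obstruction to integrability of the kernel sits in different places on these two regions.

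First I would handle the \emph{long-range} part $I_1$ where $|x-y|\geq 1$. Here the kernel is integrable in each variable separately: using $|u(x)-u(y)|^2\leq 2(|u(x)|^2+|u(y)|^2)$ and Fubini,
\begin{equation*}
I_1 \leq 4\int_\Om |u(x)|^2 \left(\int_{\{|z|\geq 1\}}\frac{dz}{|z|^{n+2s}}\right) dx = C(n,s)\|u\|_{L^2(\Om)}^2,
\end{equation*}
which is already controlled by $\|u\|_{H^1(\Om)}^2$. The outer integral in $z$ converges since $n+2s>n$.

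The main obstacle is the \emph{short-range} part $I_2$ with $|x-y|<1$, because here the kernel $|x-y|^{-n-2s}$ is not locally integrable and one must extract a $|h|^2$ from the difference $u(x+h)-u(x)$ to compensate. The cleanest way is to first extend $u$ to a function $\tilde u\in H^1(\mb R^n)$ with $\|\tilde u\|_{H^1(\mb R^n)}\leq C(n,\Om)\|u\|_{H^1(\Om)}$; such an extension exists because $\Om$ is a bounded $C^{1,1}$ (hence Lipschitz) domain. Then, by a density argument starting from $C^1_c(\mb R^n)$ functions and the identity $\tilde u(x+h)-\tilde u(x)=\int_0^1 \nabla\tilde u(x+th)\cdot h\,dt$, Cauchy--Schwarz and Fubini give
\begin{equation*}
\int_{\mb R^n}|\tilde u(x+h)-\tilde u(x)|^2\,dx \leq |h|^2 \,\|\nabla \tilde u\|_{L^2(\mb R^n)}^2 \quad\text{for all }h\in\mb R^n.
\end{equation*}
Multiplying by $|h|^{-n-2s}$ and integrating over $\{|h|<1\}$ yields, after the change of variables $y=x+h$,
\begin{equation*}
I_2 \leq \int_{\{|h|<1\}}|h|^{2-n-2s}\,dh \cdot \|\nabla\tilde u\|_{L^2(\mb R^n)}^2,
\end{equation*}
where the $h$-integral is finite precisely because $2-2s>0$ (this is the point at which $s<1$ is used).

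Combining the two estimates gives $[u]_{s,\Om}^2\leq C(n,s,\Om)\|u\|_{H^1(\Om)}^2$, and hence $\|u\|_{H^s(\Om)}\leq C\|u\|_{H^1(\Om)}$. The delicate step is the extension $u\mapsto \tilde u$ and the short-range estimate; the rest is a routine splitting of the Gagliardo integral. If one prefers to avoid extension, one can instead cover $\Om$ by Lipschitz charts and carry out the translation argument locally, at the cost of more bookkeeping.
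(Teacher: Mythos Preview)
Your proof is correct. The paper does not supply its own argument for this lemma; it simply cites \cite[Proposition~2.2]{DPV}, and your proof---splitting the Gagliardo integral into short- and long-range pieces, handling the long-range part by $|u(x)-u(y)|^2\le 2(|u(x)|^2+|u(y)|^2)$ and integrability of $|z|^{-n-2s}$ at infinity, and controlling the short-range part via a Stein extension together with the translation estimate $\|\tilde u(\cdot+h)-\tilde u\|_{L^2}\le |h|\,\|\nabla\tilde u\|_{L^2}$---is exactly the standard argument given there. One minor remark: because you route through an extension operator, the constant you obtain is $C(n,s,\Om)$ rather than the $C(n,p,s)$ stated in the lemma (the ``$p$'' in the statement is itself a vestige of the general $W^{s,p}$ formulation in \cite{DPV}); this $\Om$-dependence is intrinsic to the method and is harmless for every use of the lemma in the paper.
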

	 Next, we present the following result from \cite[Lemma 2.1]{BSM}.
	 \begin{Lemma} \label{l2.3}
	 	There exists a constant $C= C(n,p,s,\Om)$ such that
	 	\begin{align}\label{e2.1}
	 		\iint_{\mb R^{2n}}{| \mc B(u)(x,y)|^2}\,d\nu \leq C \I\Om|\nabla u(x)|^2dx,\quad \forall u \in H_0^1(\Om).
	 	\end{align}
	 \end{Lemma}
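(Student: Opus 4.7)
The plan is to split $\mathbb{R}^{2n}$ into the four natural pieces $\Omega\times\Omega$, $\Omega\times(\mathbb{R}^n\setminus\Omega)$, $(\mathbb{R}^n\setminus\Omega)\times\Omega$, and $(\mathbb{R}^n\setminus\Omega)\times(\mathbb{R}^n\setminus\Omega)$. By the definition of $H_0^1(\Om)$ adopted in the paper, any $u\in H_0^1(\Om)$ vanishes on $\mathbb{R}^n\setminus\Om$, so the last piece contributes zero, and the symmetry $(x,y)\leftrightarrow(y,x)$ of the kernel reduces the two mixed pieces to a single computation. The interior piece $\Omega\times\Omega$ is handled directly by Lemma \ref{l2.2}, which gives $[u]_{s,\Om}^2 \leq C\|u\|_{H^1(\Om)}^2$.

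For the mixed piece, using $u(y)=0$ for $y\in\mathbb{R}^n\setminus\Om$, I would write
\begin{equation*}
\iint_{\Om\times(\mathbb{R}^n\setminus\Om)}\frac{|u(x)|^2}{|x-y|^{n+2s}}\,dx\,dy = \I{\Om} |u(x)|^2\left(\int_{\mathbb{R}^n\setminus\Om}\frac{dy}{|x-y|^{n+2s}}\right)dx
\end{equation*}
and use the elementary geometric fact $B(x,d(x))\subset\Om$, with $d(x):=\Dist(x,\partial\Om)$, to bound the inner integral by $C(n,s)\,d(x)^{-2s}$ via polar coordinates. It then remains to prove the boundary-weighted bound
\begin{equation*}
\I{\Om} \frac{|u(x)|^2}{d(x)^{2s}}\,dx \leq C(n,s,\Om)\,\|u\|_{H^1(\Om)}^2,
\end{equation*}
which is the fractional Hardy inequality on the $C^{1,1}$ domain $\Om$, valid for every $s\in(0,1)$ and every $u\in H_0^1(\Om)$. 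Assembling the interior and mixed contributions and closing with the Poincar\'e inequality $\|u\|_{L^2(\Om)}\leq C(\Om)\|\na u\|_{L^2(\Om)}$ yields \eqref{e2.1} with a constant depending on $n$, $s$, and $\Om$.

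The main obstacle, if one proves the estimate from scratch, is the Hardy-type control near $\partial\Om$: the kernel $|x-y|^{-n-2s}$ is genuinely singular in the mixed region, so an ad hoc bound by an unweighted $L^2$ norm of $u$ is not available, and one must exploit in a nontrivial way that $u$ vanishes at the boundary in the trace sense. Once that is in place (or quoted, as is done via \cite{BSM}), the remaining steps are routine.
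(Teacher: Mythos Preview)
Your argument is correct, but there is nothing to compare it against: the paper does not prove Lemma~\ref{l2.3} at all, it merely quotes it from \cite[Lemma~2.1]{BSM}. Your four-piece decomposition, together with Lemma~\ref{l2.2} for the interior term and the Hardy-type bound for the mixed terms, is the standard self-contained route to this inequality. One small remark that would tighten the write-up: since $s\in(0,1)$ and $\Om$ is bounded, the ``fractional'' Hardy inequality $\int_\Om |u|^2 d(x)^{-2s}\,dx \leq C\|\na u\|_{L^2(\Om)}^2$ follows immediately from the classical Hardy inequality $\int_\Om |u|^2 d(x)^{-2}\,dx \leq C\|\na u\|_{L^2(\Om)}^2$ on a $C^{1,1}$ domain via $d(x)^{-2s}\leq (\mathrm{diam}\,\Om)^{2-2s} d(x)^{-2}$, so no separate fractional Hardy result needs to be invoked.
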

	 \begin{Remark}\label{rm2.4}
	 	By combining \eqref{e2.1} with the Poincaré inequality, we can observe that the following norms on the space $H_0^1(\Om)$, defined for $u\in H_0^{1}(\Om)$, are equivalent:
	 	\begin{equation*}\label{norm}
	 		\|u\|: =\left(\I\Om |\na u|^2 dx +\iint_{\mb R^{2n}}{|\mc B u(x,y)|^2}\,d\nu\right)^\frac12,
	 	\end{equation*}
        and 
	 	\begin{align*}
	 		\|u\|_2: = \|\na u\|_{L^2(\Om)}.
	 	\end{align*}
	 \end{Remark}
For more information on the space $H_0^{1}(\Omega)$, refer to \cite{Vecchihong, VecchiBO, Vecchihenon} and the references therein. The dual space of $H^{1}_{0}(\Om)$ is denoted by $H^{-1}(\Om)$.

As noted in \cite{AroRad, GUna}, solutions to singular problems are generally not elements of $H_0^{1}(\Omega)$ for large $\gamma>0$. Therefore, boundary values are understood in the following sense:
	 \begin{Definition}\label{d2.5}
	 	We say that $u \leq 0$ on $\partial \Om$ if $u =0$ in $\mb R^n \setminus \Om$ and for every $\e >0$, we have 
	 	\begin{align*}
	 		(u-\e)^+ \in H_0^1(\Om).
	 	\end{align*}
	 	We will say $u =0$ on $\partial \Om$ if $u$ in non-negative and $u \leq 0$ on $\partial \Om$.
	 \end{Definition}
	 Before presenting the weak formulation of $(\mc P_{\gamma,w})$, we state the following proposition, the proof of which follows similarly to that of \cite[Proposition 2.3]{CMST} by using Lemmas \ref{l2.2} and \ref{l2.3}.
	 \begin{prop}\label{p2.6}
	 	Let $u \in H^1_{\text{loc}}(\Om) \cap L^1(\Om)$ and $u =0$ for  $x \in \mb R^n \setminus \Om$. Then for every $ \varphi \in C_c^\infty (\Om)$, we have
	 	\begin{align*}
	 	\iint_{\mb R^{2n}} \mathcal{B}(u)(x,y)\mathcal{B}(\varphi)(x,y)\,d\nu < \infty.
	 	\end{align*} 	
	 \end{prop}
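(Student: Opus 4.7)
\textbf{Proof plan for Proposition \ref{p2.6}.} The plan is to split the integration domain $\mb R^{2n}$ according to a compactly supported neighborhood of $\supp \varphi$ and estimate each piece separately. I would fix open sets $K_1,K_2$ with $\supp\varphi\subset K_1\Subset K_2\Subset\Om$, and decompose $\mb R^{2n}$ as the disjoint union of the four pieces
$$K_2\times K_2,\qquad K_2\times(\mb R^n\setminus K_2),\qquad (\mb R^n\setminus K_2)\times K_2,\qquad (\mb R^n\setminus K_2)\times(\mb R^n\setminus K_2).$$
On the last region, both $\varphi(x)$ and $\varphi(y)$ vanish since $\supp\varphi\subset K_1\subset K_2$, so $\mathcal{B}(\varphi)\equiv 0$ and this piece contributes $0$.

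On $K_2\times K_2$, I would apply the Cauchy--Schwarz inequality to the measure $d\nu$, bounding the integral by $[u]_{s,K_2}\,[\varphi]_{s,K_2}$. Since $u\in H^1_{\text{loc}}(\Om)$ and $K_2\Subset\Om$, we have $u\in H^1(K_2)$, and Lemma \ref{l2.2} yields $[u]_{s,K_2}\leq C\|u\|_{H^1(K_2)}<\infty$; the same embedding bound applies to $\varphi\in C_c^\infty(\Om)\subset H^1(K_2)$, so this piece is finite.

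For the two symmetric mixed regions, the crucial geometric observation is that $d:=\Dist(K_1,\mb R^n\setminus K_2)>0$. On $K_2\times(\mb R^n\setminus K_2)$, the identity $\mathcal{B}(\varphi)(x,y)=\varphi(x)$ forces $x\in\supp\varphi\subset K_1$, so it suffices to control
$$\|\varphi\|_\infty \iint_{K_1\times(\mb R^n\setminus K_2)}\frac{|u(x)|+|u(y)|}{|x-y|^{n+2s}}\,dx\,dy.$$
For the $|u(x)|$-contribution, for each fixed $x\in K_1$ I integrate first in $y$ over $\{|y-x|\geq d\}$ using spherical coordinates, obtaining the constant $Cd^{-2s}$; what remains is $\int_{K_1}|u|\,dx$, finite because $u\in L^1(\Om)$. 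For the $|u(y)|$-contribution, the hypothesis $u=0$ outside $\Om$ restricts the inner $y$-integral to $\Om\setminus K_2$, on which $|x-y|\geq d$; this gives an estimate of the form $d^{-n-2s}|K_1|\,\|u\|_{L^1(\Om)}$, also finite. The third piece is identical by symmetry.

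The main obstacle, such as it is, lies in this last step: the $|u(y)|$-contribution over the mixed region genuinely requires both hypotheses of the proposition simultaneously, namely $u\in L^1(\Om)$ and $u\equiv 0$ on $\mb R^n\setminus\Om$, since without the second we would face an unbounded $y$-integral and without the first the singular kernel would not be tamed by the (uniform but finite) bound $d^{-n-2s}$. Once the decomposition and these two observations are in place, the remaining bookkeeping is routine, following the approach of \cite[Proposition 2.3]{CMST} adapted to the present mixed local--nonlocal norm via Lemmas \ref{l2.2} and \ref{l2.3}.
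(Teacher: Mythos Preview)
Your proposal is correct and follows essentially the same approach the paper indicates: the paper does not write out a proof but refers to \cite[Proposition 2.3]{CMST} together with Lemmas \ref{l2.2} and \ref{l2.3}, which is precisely the decomposition-plus-embedding argument you outline. The only minor remark is that you invoke Lemma \ref{l2.2} for the diagonal block $K_2\times K_2$ but never actually use Lemma \ref{l2.3}; this is fine, since your direct tail estimates via $u\in L^1(\Om)$ and $u\equiv 0$ off $\Om$ make Lemma \ref{l2.3} unnecessary here.
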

	 In view of Proposition \ref{p2.6}, we introduce the following definition of a weak solution to the problem $(\mc P_{\gamma,w})$:
	 \begin{Definition}\label{dws}
	 	Let $w\in H^{-1}(\Om)$. A function $u \in H^1_{\text{loc}}(\Om) \cap L^1(\Om)$ is said to be a weak solution to the problem $(\mc P_{\gamma,w})$ if:
	 		\begin{enumerate}
	 		\item $u>0$ in $\Om$, $ u=0$ on $\partial \Om$ in the sense of Definition \ref{d2.5} and $u^{-\gamma} \in L^1_{\text{loc}}(\Om)$.
	 		\item For every $\varphi \in C_c^\infty(\Om)$, we have
	 		\begin{align*}
	 			\I{\Om} \na u\na \varphi dx + \iint_{\mb R^{2n}}  \mathcal{B}(u)(x,y)\mathcal{B}(\varphi)(x,y)\,d\nu =\I{\Om} u^{-\gamma} \varphi dx +\ld w,\varphi\rd. 
	 		\end{align*}
	 	\end{enumerate}
	 	\end{Definition}
	 	Next we define the notion of weak subsolutions and weak supersolutions of the problem $(\mc P_{\gamma,w})$.
	 	\begin{Definition} (Weak supersolution)
	 		Let $w\in H^{-1}(\Om)$. A function $v \in H^1_{\text{loc}}(\Om) \cap L^1(\Om)$ is said to be a weak supersolution to $(\mc P_{\gamma,w})$, if
	 		\begin{enumerate}
	 			\item $v>0$ in $\Om$, $ v=0$ in $\mb R^n \setminus \Om$ and $v^{-\gamma} \in L^1_{\text{loc}}(\Om)$. 
	 		
	 		\item  For every $\varphi \in C_c^\infty(\Om)$ with $\varphi \geq 0$, we have
	 		\begin{align*}
	 				\I{\Om} \na v\na \varphi dx + \iint_{\mb R^{2n}} \mathcal{B}(v)(x,y)\mathcal{B}(\varphi)(x,y)\,d\nu \geq \I{\Om} v^{-\gamma} \varphi dx +\ld w,\varphi\rd.
	 		\end{align*}
	 		\end{enumerate}
	 		\end{Definition}
	 		\begin{Definition} (Weak subsolution)
	 			Let $w\in H^{-1}(\Om)$. A function $v \in H^1_{\text{loc}}(\Om) \cap L^1(\Om)$ is said to be a weak subsolution to $(\mc P_{\gamma,w})$, if
	 			\begin{enumerate}
	 				\item $v>0$ in $\Om$, $ v=0$ in $\mb R^n \setminus \Om$ and $v^{-\gamma} \in L^1_{\text{loc}}(\Om)$. 
	 	\item For every $\varphi \in C_c^\infty(\Om)$ with $0 \leq \varphi \leq v$, we have
	 		\begin{align*}
	 			\I{\Om} \na v\na \varphi dx + \iint_{\mb R^{2n}} \mathcal{B}(v)(x,y)\mathcal{B}(\varphi)(x,y)\,d\nu \leq  \I{\Om} v^{-\gamma} \varphi dx +\ld w,\varphi\rd.
	 		\end{align*}
	 	\end{enumerate}
	 	\end{Definition}

        \subsection{Auxiliary results} 
        {In this subsection, we present some auxiliary results that are essential for deriving our main results. The first of these is a comparison result, which can be obtained by reasoning in a manner similar to the proof of \cite[Lemma 4.5]{GUna}.}
	 	\begin{Lemma}\label{tcp}
	 		Let $\gamma >0$ and $w \in H^{-1}(\Om)$. Suppose $v\in H^{1}_{\mathrm{loc}}(\Om)\cap L^1(\Om)$ is a weak subsolution to $(\mc P_{\gamma,w})$ such that $v \leq 0$ on $\partial \Om$, and let $z\in H^{1}_{\mathrm{loc}}(\Om)\cap L^1(\Om)$ be a weak supersolution to $(\mc P_{\gamma,w})$. Then, it follows that $v \leq z$ in $\Om$.
	 	\end{Lemma}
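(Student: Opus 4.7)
The plan is to subtract the weak subsolution and supersolution inequalities, tested against a function that localizes to the set $\{v>z\}$, and exploit the strict monotonicity of $t\mapsto -t^{-\gamma}$ for $t>0$ together with the non-negativity of both the local and nonlocal quadratic contributions.

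First, fix $\e>0$ and consider the candidate test function $\varphi_\e:=(v-z-\e)^+$. The first step is to verify that $\varphi_\e$ is an admissible test function for both sub- and supersolution inequalities. Since $v\leq 0$ on $\pa\Om$ in the sense of Definition \ref{d2.5}, we have $(v-\e)^+\in H_0^1(\Om)$; as $z>0$ in $\Om$, the pointwise bound $0\le \varphi_\e\le (v-\e)^+$ together with $v,z\in H^1_{\loc}(\Om)$ yields $\varphi_\e\in H_0^1(\Om)$. Moreover, on the set $\{\varphi_\e>0\}$ one has $v>z+\e>0$, so $0\le\varphi_\e\le v$. A standard truncation plus mollification argument then produces an approximating sequence in $C_c^\infty(\Om)$ respecting the bounds $0\le\varphi\le v$, justifying the substitution of $\varphi_\e$ into both weak formulations.

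Using $\varphi_\e$ in the subsolution inequality for $v$ and the supersolution inequality for $z$, and subtracting, I obtain
\begin{equation*}
\I{\Om}\na(v-z)\cdot\na\varphi_\e\,dx+\iint_{\mb R^{2n}}\mc B(v-z)(x,y)\,\mc B(\varphi_\e)(x,y)\,d\nu\;\leq\;\I{\Om}(v^{-\gamma}-z^{-\gamma})\,\varphi_\e\,dx.
\end{equation*}
On $\{\varphi_\e>0\}$ we have $v>z$, and monotonicity of $t\mapsto -t^{-\gamma}$ on $(0,\infty)$ forces the right-hand side to be non-positive. The local term equals $\int_\Om|\na\varphi_\e|^2\,dx\geq 0$, and since $r\mapsto(r-\e)^+$ is a non-expansive monotone truncation, the pointwise inequality $\mc B(v-z)(x,y)\,\mc B(\varphi_\e)(x,y)\geq|\mc B(\varphi_\e)(x,y)|^2$ holds, so the nonlocal term is also non-negative. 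Combining yields $\|\na\varphi_\e\|_{L^2(\Om)}^2\leq 0$; Poincaré's inequality then gives $\varphi_\e\equiv 0$, i.e.\ $v\leq z+\e$ in $\Om$. Letting $\e\to 0^+$ concludes $v\leq z$ in $\Om$.

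The main technical obstacle is verifying admissibility of $\varphi_\e$ in the subsolution inequality, whose test functions are required to belong to $C_c^\infty(\Om)$ and satisfy $0\leq\varphi\leq v$. The pointwise bound is immediate from $z>0$, but the density step must preserve both the $H_0^1$-membership and the upper bound by $v$; this is handled by truncating $\varphi_\e$ from above by $v\wedge M$ for large $M$, mollifying, and passing to the limit, exactly as in \cite[Lemma 4.5]{GUna}. Once the test function is legitimate, the remainder is a short monotonicity argument; the singularity $u^{-\gamma}$ actually helps rather than hinders, since it contributes to the sign of the reaction term.
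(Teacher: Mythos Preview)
Your proof is correct and follows essentially the same approach as the paper, which does not spell out the argument but refers to \cite[Lemma~4.5]{GUna}; your choice of test function $(v-z-\e)^+$, the subtraction of the sub- and supersolution inequalities, the monotonicity of $t\mapsto -t^{-\gamma}$, and the sign of the local and nonlocal quadratic forms are exactly the ingredients of that reference. Your explicit acknowledgment of the admissibility issue for $\varphi_\e$ and its resolution via truncation/mollification as in \cite{GUna} matches the paper's intent.
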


Next, we present the following result from \cite[Lemma 4]{CGS}.
	\begin{Lemma}\label{l4.1}
		Let $\gamma>0$ and consider the function $\Re_\gamma:U\to\mathbb{R}$ defined by
		\begin{align*}
\Re_\gamma(x,y,j,m):=x^\gamma(x+y)^\gamma(j+m)^\gamma+ x^\gamma j^\gamma(j+m)^\gamma-j^\gamma(x+y)^\gamma(j+m)^\gamma-x^\gamma j^\gamma(x+y)^\gamma,
		\end{align*}
		where the domain $U \subset \mb R^4$ is defined by
		\begin{align*}
			U:=\{(x,y,j,m): 0 \leq x \leq j,\, 0\leq m \leq y\}.
		\end{align*}
		Then it follows that $\Re_\gamma \leq 0$ in $U$. 
	\end{Lemma}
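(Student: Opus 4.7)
The plan is to regroup the four terms of $\Re_\gamma$ so that the inequality reduces to a monotonicity property of a one-parameter family of functions. First, I would collect the first with the fourth summand and the second with the third summand, obtaining
\begin{equation*}
\Re_\gamma(x,y,j,m)=x^\gamma(x+y)^\gamma\bigl[(j+m)^\gamma-j^\gamma\bigr]-j^\gamma(j+m)^\gamma\bigl[(x+y)^\gamma-x^\gamma\bigr].
\end{equation*}
Since $m\geq 0$ and $y\geq 0$, both bracketed quantities are non-negative, so the desired inequality $\Re_\gamma\leq 0$ is equivalent to
\begin{equation*}
\frac{(j+m)^\gamma-j^\gamma}{j^\gamma(j+m)^\gamma}\;\leq\;\frac{(x+y)^\gamma-x^\gamma}{x^\gamma(x+y)^\gamma},
\end{equation*}
which, after clearing denominators, reads
\begin{equation*}
\frac{1}{j^\gamma}-\frac{1}{(j+m)^\gamma}\;\leq\;\frac{1}{x^\gamma}-\frac{1}{(x+y)^\gamma}.
\end{equation*}

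Second, I would introduce the auxiliary function $f(a,b):=a^{-\gamma}-(a+b)^{-\gamma}$ defined for $a>0$, $b\geq 0$, and prove that $f$ is non-increasing in $a$ and non-decreasing in $b$. A direct differentiation yields $\partial_a f(a,b)=-\gamma a^{-\gamma-1}+\gamma(a+b)^{-\gamma-1}\leq 0$ because $a\leq a+b$, and $\partial_b f(a,b)=\gamma(a+b)^{-\gamma-1}\geq 0$. Given that $x\leq j$ and $m\leq y$, this monotonicity immediately gives
\begin{equation*}
f(j,m)\;\leq\;f(x,m)\;\leq\;f(x,y),
\end{equation*}
which is exactly the inequality required.

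Finally, I would deal with the degenerate cases. If $x=0$ then the first and second summands of $\Re_\gamma$ vanish and only the non-positive term $-j^\gamma y^\gamma(j+m)^\gamma$ survives; if additionally $j=0$ then $x=j=0$ forces $\Re_\gamma\equiv 0$. In all other cases $x,j>0$ and the argument above applies. I do not anticipate any serious obstacle here: the essential content is the algebraic regrouping that isolates the divided-difference quantity $a^{-\gamma}-(a+b)^{-\gamma}$, after which the monotonicity is elementary. The only care needed is verifying that the regrouping genuinely produces two terms of opposite sign (rather than unrelated sign patterns), which is guaranteed precisely by the domain restrictions $0\leq x\leq j$ and $0\leq m\leq y$.
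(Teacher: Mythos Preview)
Your argument is correct. The regrouping
\[
\Re_\gamma = x^\gamma(x+y)^\gamma\bigl[(j+m)^\gamma-j^\gamma\bigr]-j^\gamma(j+m)^\gamma\bigl[(x+y)^\gamma-x^\gamma\bigr]
\]
is valid, the reduction to $f(j,m)\leq f(x,y)$ with $f(a,b)=a^{-\gamma}-(a+b)^{-\gamma}$ is clean, and the monotonicity of $f$ in each variable is exactly what the domain constraints $x\leq j$, $m\leq y$ exploit. The degenerate case $x=0$ is handled correctly (in fact the first, second, and fourth summands all vanish, not just the first two, but your conclusion that only $-j^\gamma y^\gamma(j+m)^\gamma\leq 0$ survives is right).

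As for comparison with the paper: the paper does not prove this lemma at all---it simply quotes the result from \cite[Lemma 4]{CGS}. Your self-contained proof is therefore more informative than what the paper offers, and it is essentially the natural argument one would give: the observation that $\Re_\gamma\leq 0$ is equivalent (after clearing the common positive factors) to the monotonicity of $a^{-\gamma}-(a+b)^{-\gamma}$ is the whole content of the lemma, and this is precisely the fact the paper uses later (see, e.g., the function $h(s)$ in the proof of Lemma \ref{l4.4}).
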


	 	We also recall an extension of the celebrated Mountain pass theorem (see \cite{S}) of Ambrosetti and Rabinowitz {as stated in Theorem \ref{TMPT} below,} which will be used to study the jumping problem.
	 	\begin{Definition}\label{dcp}
	 		Let $V$ be a real Banach space, and suppose $ \mc J =\mc F +\mc K$, where $\mc F: V \ra (-\infty,+\infty]$ is convex, proper (i.e. $\mc J \not \equiv +\infty$) and lower semicontinuous functional, and $\mc K: V \ra \mb R$ is a functional of class $C^1$. We say that $u \in V$ is a critical point of $\mc J$, if
	 		\begin{align*}
	 			\mc F(v) \geq \mc F(u)-\ld \mc K^\prime(u),v-u\rd, ~\forall v \in V.
	 		\end{align*}
	 	\end{Definition}
	 	\begin{Definition}
	 		As in Definition \ref{dcp}, we say that $\mc J$ satisfies the Palais-Smale (PS) condition if, for every sequence $\{u_k\}_{k \in \mb N}$ in $V$ and $\{\omega_k\}_{k \in \mb N}$ in $V^\ast$ such that $\sup_{k \in \mb N} |\mc J(u_k)| <+\infty$, $\omega_k \ra 0$, and
	 		\begin{align*}
	 			\mc F(v) \geq \mc F(u_k)-\ld \mc K^\prime(u_k),v-u_k\rd +\ld \omega_k,v-u_k\rd,~\forall v \in V,
	 		\end{align*}
	 		the sequence $\{u_k\}_{k \in \in \mb N}$ has a convergent subsequence in $V$.
	 	\end{Definition}
	 	\begin{Theorem}\label{TMPT}
	 		As in Definition \ref{dcp}, assume that $\mc J$ satisfies the $(PS)$ condition, and that there exist $r >0$ and $\sigma > \mc J(0)$ such that 
	 		\begin{align*}
	 			&\mc J(u) \geq \sigma,~\forall u \in V~\text{with}~\|u\|=r,\text{ and }\\
	 			&	\mc J(u_1) \leq \mc J(0),~\text{for some}~u_1 \in V~\text{with}~\|u_1\| >r.
	 		\end{align*}
	 		Then there exists a critical point $v$ of $\mc J$ with $\mc J(v) \geq \sigma$.
	 	\end{Theorem}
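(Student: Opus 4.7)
The plan is to follow the nonsmooth mountain pass scheme of Szulkin. First, I would define the class of admissible paths
$$\Gamma = \{\gamma \in C([0,1];V) : \gamma(0) = 0,\ \gamma(1) = u_1\}$$
and the minimax level
$$c = \inf_{\gamma \in \Gamma} \sup_{t \in [0,1]} \mathcal{J}(\gamma(t)).$$
From the hypothesis $\|u_1\| > r$ and the intermediate value theorem applied to $t \mapsto \|\gamma(t)\|$, every $\gamma \in \Gamma$ crosses the sphere $\|u\| = r$, whence $\sup_t \mathcal{J}(\gamma(t)) \geq \sigma$ and therefore $c \geq \sigma > \mathcal{J}(0) \geq \mathcal{J}(u_1)$. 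The goal then reduces to showing that $c$ is a critical value.

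Second, I would argue by contradiction and construct a Szulkin-type deformation. Assuming no critical point exists at level $c$, Definition \ref{dcp} yields, at every $u$ which is not critical, some $v_u \in V$ with $\mathcal{F}(v_u) - \mathcal{F}(u) + \langle \mathcal{K}'(u), v_u - u\rangle < 0$. By the convexity and lower semicontinuity of $\mathcal{F}$ and the continuity of $\mathcal{K}'$, the same displacement $w_u := v_u - u$ satisfies a strict decrease inequality on a small neighborhood of $u$. A locally finite partition of unity subordinate to such neighborhoods glues these constant displacements into a locally Lipschitz vector field $\mathcal{W}$ on the set of non-critical points; this plays the role of a pseudo-gradient adapted to the convex subdifferential structure.

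Third, I would use the Palais–Smale condition to obtain a uniform quantitative bound: there exist $\bar{\varepsilon} > 0$ and $\delta > 0$ such that on the strip $\{c - \bar{\varepsilon} \leq \mathcal{J} \leq c + \bar{\varepsilon}\}$ the field $\mathcal{W}$ decreases $\mathcal{J}$ at rate at least $\delta$; otherwise, one would extract a Palais–Smale sequence at level $c$ whose limit would be a critical point at level $c$, contrary to the standing assumption. Integrating the flow of $-\mathcal{W}$ for a suitable time produces a continuous map $\eta : V \to V$ with $\eta = \mathrm{id}$ outside $\{|\mathcal{J} - c| \leq \bar{\varepsilon}\}$ and $\eta(\{\mathcal{J} \leq c + \varepsilon\}) \subset \{\mathcal{J} \leq c - \varepsilon\}$ for some $0 < \varepsilon < \bar{\varepsilon}$ small enough. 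Finally, choosing $\gamma \in \Gamma$ with $\sup_t \mathcal{J}(\gamma(t)) \leq c + \varepsilon$ and observing that the endpoints lie outside the deformation region, we see $\eta \circ \gamma \in \Gamma$ and $\sup_t \mathcal{J}(\eta \circ \gamma(t)) \leq c - \varepsilon$, contradicting the definition of $c$.

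The main obstacle is the construction of the deformation, because $\mathcal{F}$ is only convex and lower semicontinuous, so the classical pseudo-gradient lemma for $C^1$ functionals is unavailable. The trick is to work with constant displacements $v_u - u$ on each neighborhood and to ensure that the resulting flow stays in the effective domain of $\mathcal{F}$; here both the convexity and the lower semicontinuity of $\mathcal{F}$ are essential. Once this deformation is in hand, the remaining steps are routine minimax arguments.
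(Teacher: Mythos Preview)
The paper does not give its own proof of Theorem \ref{TMPT}; it is stated as an auxiliary result and attributed to Szulkin \cite{S} (see the sentence preceding Definition \ref{dcp}: ``We also recall an extension of the celebrated Mountain pass theorem (see \cite{S}) of Ambrosetti and Rabinowitz''). Your sketch is precisely the strategy of Szulkin's original argument --- minimax level over paths, Ekeland/deformation based on local constant displacements $v_u-u$ glued via a partition of unity, and the (PS) condition to get uniform decrease --- so in spirit it matches the reference the paper invokes. There is nothing to compare against in the paper itself beyond that citation.

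One technical remark on your outline: in Szulkin's setting the ``flow'' is not obtained by integrating a vector field in the ODE sense (this would require more regularity than locally Lipschitz directions into the effective domain of $\mathcal F$); rather, the deformation is built by moving along convex combinations $u\mapsto (1-t)u+t\,v(u)$ with $v(u)$ the glued displacement, which keeps points in $\mathrm{dom}\,\mathcal F$ by convexity and makes $\mathcal J$ decrease via the inequality $\mathcal F((1-t)u+tv)\le(1-t)\mathcal F(u)+t\mathcal F(v)$ combined with the $C^1$ expansion of $\mathcal K$. If you intend to write out the proof, this is the point where your phrase ``integrating the flow of $-\mathcal W$'' should be made precise.
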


	 \subsection{Main results}
     We are now ready to present our main results, which are stated as follows: The first major result establishes the symmetry of weak solutions to the problem $(\mc P_{\gamma,w})$ when $w$ is of the form $(\mc H_1)$.
	 \begin{Theorem}\label{t1.3}(\textbf{Symmetry})
	 	Let $\gamma >0$ and $w$ be of the form $(\mc H_1)$ specifically $w=\wp(u)$, where $\wp(u)$ satisfies the hypothesis $(\mc A)$. Assume that $\Om \subset \mb R^n$ is a bounded smooth domain. Furthermore, {suppose $\Om$ is {strictly convex} with respect to the $x_1$-direction and symmetric with respect to the hyperplane $\{x_1=0\}$.} Then every weak solution $u \in H^1_{\mathrm{loc}}(\Om)\cap L^1(\Om)\cap C(\overline{\Om}) $ of $(\mc P_{\gamma,w})$ is symmetric with respect to $\{x_1 =0\}$. Moreover, if $\Om$ is a ball, then $u$ is radially symmetric.
	 \end{Theorem}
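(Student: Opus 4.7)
The plan is to exploit the decomposition provided by Theorem \ref{t1.2}: every weak solution $u$ of $(P_{\gamma,w})$ with $w = \wp(u)$ admits a splitting $u = u_0 + z$, where $u_0 \in H^1_{\mathrm{loc}}(\Omega) \cap C(\overline{\Omega})$ is a weak solution of the purely singular problem $(\mathcal P_{\gamma,0})$ and $z \in H^1_0(\Omega)$ takes zero boundary values in the Sobolev sense. Since the sum of two functions symmetric with respect to $\{x_1 = 0\}$ is again symmetric, it suffices to prove the symmetry of $u_0$ and $z$ separately; the radial case then follows by repeating the argument along every coordinate direction.

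For $u_0$, the comparison principle of Lemma \ref{tcp} yields uniqueness of the weak solution to $(\mathcal P_{\gamma,0})$. Since $\Omega$ is symmetric with respect to $\{x_1 = 0\}$ and $\mathcal M$ commutes with isometries of $\mathbb R^n$, the reflection $\tilde u_0(x) := u_0(-x_1, x_2, \ldots, x_n)$ is also a weak solution of $(\mathcal P_{\gamma,0})$, so uniqueness forces $u_0 = \tilde u_0$.

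For $z$, I would deploy the moving plane technique in the $x_1$-direction. Set $-a := \inf_{x \in \Omega} x_1$, and for $\lambda \in (-a, 0]$ let $T_\lambda = \{x_1 = \lambda\}$, $\Omega_\lambda = \Omega \cap \{x_1 < \lambda\}$, $x_\lambda = (2\lambda - x_1, x_2, \ldots, x_n)$, and $v_\lambda(x) := v(x_\lambda)$. The strict convexity of $\Omega$ in the $x_1$-direction guarantees that the reflection of $\Omega_\lambda$ is contained in $\Omega$. Starting from the weak equation $\mathcal M z = u^{-\gamma} - u_0^{-\gamma} + \wp(u)$ inherited from the decomposition, and using the symmetry of $u_0$ just established so that $(u_0)_\lambda = u_0$, I would subtract the equations for $z$ and $z_\lambda$ and test the resulting inequality against $(z - z_\lambda)^+$, which belongs to $H^1_0(\Omega_\lambda)$ thanks to the zero boundary values of $z$ and the geometric inclusion. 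The monotonicity of $\wp$, together with the pointwise algebraic inequality in Lemma \ref{l4.1} that compensates for the non-Lipschitz behaviour of $t \mapsto t^{-\gamma}$ at $0$, supplies the favourable sign of the right-hand side. A weak-type maximum principle for the mixed operator $\mathcal M$ then forces $(z - z_\lambda)^+ \equiv 0$ in $\Omega_\lambda$ for every $\lambda \in (-a, 0]$. Passing to $\lambda = 0$ and repeating the procedure in the opposite $x_1$-direction yields the desired symmetry.

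The principal technical obstacle lies in the nonlocal part $(-\Delta)^s$ of $\mathcal M$: unlike the Laplacian, the double integral $\iint \mathcal B(z)(x,y)\mathcal B(\varphi)(x,y)\,d\nu$ does not split cleanly across $T_\lambda$, and one must decompose the integration over the four regions obtained by partitioning $\mathbb R^n$ into $\{x_1 < \lambda\}$ and $\{x_1 \geq \lambda\}$ in both variables. The desired sign is then recovered via the reflection identity $|x - y_\lambda| = |x_\lambda - y|$ and the invariance of the kernel $|x-y|^{-n-2s}$, combined with the vanishing of $z$ outside $\Omega$. This nonlocal bookkeeping, together with the careful use of Lemma \ref{l4.1} to absorb the singular contribution, is where the bulk of the work will go.
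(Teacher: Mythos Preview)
Your decomposition strategy and the overall scheme match the paper's, but there is a genuine gap in the step for $z$. You write that ``using the symmetry of $u_0$ just established so that $(u_0)_\lambda = u_0$''; this is false for $\lambda \neq 0$. Your uniqueness argument only yields $u_0(x) = u_0(-x_1,x_2,\dots,x_n)$, i.e.\ $(u_0)_0 = u_0$, whereas for $\lambda \in (-a,0)$ the reflection $(u_0)_\lambda(x) = u_0(2\lambda - x_1,x_2,\dots,x_n)$ has no reason to equal $u_0(x)$ on $\Omega_\lambda$.

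What the moving-plane argument for $z$ actually needs, at every intermediate $\lambda$, is the \emph{monotonicity} $u_0 \leq (u_0)_\lambda$ on $\Omega_\lambda$. This is precisely the hypothesis $x \leq j$ in Lemma \ref{l4.1} (with $x = u_0$, $j = (u_0)_\lambda$), without which the singular combination $(u_0)_\lambda^{-\gamma} - u_0^{-\gamma} + (u_0+z)^{-\gamma} - ((u_0)_\lambda + z_\lambda)^{-\gamma}$ has no definite sign on the support of $(z-z_\lambda)^+$, and the maximum-principle step collapses. The paper supplies this monotonicity separately (Proposition \ref{pspu_0}) by running the moving plane on the regularized approximations $u_k$ of $u_0$ and passing to the limit; your uniqueness shortcut, while it does give symmetry of $u_0$, does not produce the intermediate monotonicity and therefore cannot feed the argument for $z$.

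A secondary point: your anticipated four-region decomposition for the nonlocal term is more elaborate than what the paper uses. In Proposition \ref{p4.3} the paper simply exploits the pointwise inequality $\mathcal B(v)\,\mathcal B(v^+) \geq |\mathcal B(v^+)|^2$ (cf.\ \eqref{e3.25}) to bound the nonlocal quadratic form from below by $[(z-z_\lambda)^+]_{s,\mathbb R^n}^2 \geq 0$, and then discards it, closing the estimate with Poincar\'e on the local gradient alone. No reflection identity for the kernel is invoked.
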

     Our next two main results, presented below, address the jumping problem $(\mc P_{\gamma,w})$, which occurs when $w$ takes the form $(\mc H_2)$.
	 		\begin{Theorem}\label{t1.4}(\textbf{Multiplicity and regularity for large $\lambda$})
	 			Let $\gamma>0$, $\alpha >\la_1$ and $w$ be of the form $(\mc H_2)$. {Assume that $\Om \subset \mb R^n$ is a bounded $C^{1,1}$ domain.} Then there exists $\overline{\la} \in \mb R$ such that for every $\la >\overline{\la}$, the problem $(\mc P_{\gamma,w})$ has at least two distinct weak solutions in $H^1_{\mathrm{loc}}(\Om)\cap L^1(\Om)$. Moreover, these weak solutions belong to $C(\overline{\Om})\cap\left(\cap_{1\leq p<\infty}W^{2,p}_{\text{loc}}(\Om)\right)$ if $s \in (1,1/2]$, and to $C(\overline{\Om})\cap\left(\cap_{1\leq p<n/(2s-1)}W^{2,p}_{\text{loc}}(\Om)\right)$ if $s \in (1/2,1)$.
	 		\end{Theorem}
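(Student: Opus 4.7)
The plan is to combine the variational characterization of weak solutions (Theorems \ref{t1.1} and \ref{t3.2}) with the nonsmooth Mountain Pass Theorem \ref{TMPT} to produce two distinct weak solutions for $\la$ sufficiently large, mimicking the strategy of \cite{Caninojumping, Caninodcds} and adapting it to the mixed local-nonlocal setting. I would set up on $H_0^1(\Om)$ the splitting $\mc J_\la=\mc F+\mc K_\la$, where
$$
\mc F(u)=\frac12\|u\|^2-\I\Om G_\ga(u^+)\,dx,\qquad \mc K_\la(u)=-\I\Om H(x,u)\,dx+\la\I\Om e_1 u\,dx,
$$
with $G_\ga(t)=\frac{t^{1-\ga}}{1-\ga}$ (suitably modified when $\ga\geq 1$), extended by $+\infty$ on $\{u\leq 0\}$, and $H(x,s)=\int_0^s h(x,t)\,dt$. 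A direct check (using $G_\ga''\leq 0$) shows that $\mc F$ is convex, proper and lower semicontinuous on $H_0^1(\Om)$, while $\mc K_\la$ is of class $C^1$ thanks to $(h_1)$. By Theorems \ref{t1.1} and \ref{t3.2}, critical points of $\mc J_\la$ in the sense of Definition \ref{dcp} correspond exactly to weak solutions of $(\mc P_{\gamma,w})$.

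To produce the first solution, I would construct a weak subsolution $\underline{u}$ of $(\mc P_{\gamma,w})$, for instance the unique weak solution of the purely singular problem $(\mc P_{\ga,0})$, which remains a subsolution for $\la$ large since $h(x,\underline{u})$ is bounded by $(h_1)$ and $-\la e_1$ only helps, together with a weak supersolution $\overline{u}$ constructed from a multiple of $e_1$ or a standard barrier. Minimizing $\mc J_\la$ over the order interval $[\underline{u},\overline{u}]$, and combining the comparison Lemma \ref{tcp} with the weak lower semicontinuity of $\mc J_\la$, yields a first weak solution $u_\la^{(1)}$ that is in particular a local minimum of $\mc J_\la$ in $H_0^1(\Om)$.

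For the second solution, I would translate the origin to $u_\la^{(1)}$ and apply Theorem \ref{TMPT} to $\widetilde{\mc J}_\la(v):=\mc J_\la(u_\la^{(1)}+v)-\mc J_\la(u_\la^{(1)})$. The mountain-pass geometry is supplied by the jumping assumption $\alpha>\la_1$: by $(h_2)$ together with the Rayleigh characterization of $\la_1$, a direct computation with $v=te_1$ yields $\widetilde{\mc J}_\la(te_1)\to-\infty$ as $t\to+\infty$, providing the point beyond the mountain; the strict local minimality of $u_\la^{(1)}$ (which holds for $\la$ large by a contradiction argument) gives the positive ridge $\sigma>0$ at distance $r$ from $0$. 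Verifying the Palais-Smale condition in the sense of Theorem \ref{TMPT} uses the growth in $(h_1)$, the jumping condition $\alpha>\la_1$, the compactness of the embedding $H_0^1(\Om)\hookrightarrow L^2(\Om)$, and monotonicity arguments on the convex singular part. The two critical points $u_\la^{(1)}$ and $u_\la^{(1)}+v_*$ are distinct since $\mc J_\la(u_\la^{(1)}+v_*)\geq \mc J_\la(u_\la^{(1)})+\sigma>\mc J_\la(u_\la^{(1)})$.

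For regularity, once a weak solution $u$ is obtained, $h(x,u)-\la e_1\in L^\infty(\Om)$ by $(h_1)$ together with Stampacchia-type bounds on $u$, and $u^{-\ga}$ is bounded on every $K\Subset\Om$ by positivity and continuity of $u$. Writing $-\De u = u^{-\ga}+h(x,u)-\la e_1-(-\De)^s u$ and iterating interior Calder\'on--Zygmund estimates together with the mapping properties of $(-\De)^s$ yields the stated $W^{2,p}_{\mathrm{loc}}$ regularity; the threshold $p<n/(2s-1)$ when $s>1/2$ reflects the loss of derivatives caused by the nonlocal term. Continuity up to $\partial\Om$ follows from standard barrier constructions combined with the bound $u\leq Ce_1$. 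The main obstacle will be the verification of the Palais-Smale condition in the nonsmooth sense of Definition \ref{dcp}: the interplay between the convex non-differentiable singular term and the asymptotically linear perturbation must be controlled, and this is precisely where the jumping condition $\alpha>\la_1$ together with the choice of $\la$ large is used decisively; the variational characterization of Theorem \ref{t1.1} becomes indispensable to translate the abstract critical point condition into the actual weak formulation of $(\mc P_{\gamma,w})$.
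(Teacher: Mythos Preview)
Your proposal has two genuine gaps.

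\textbf{First, the functional is set up on the wrong space.} You define $\mc J_\la$ directly on $H_0^1(\Om)$ with the singular part $-\int_\Om G_\gamma(u^+)\,dx$. But for $\gamma>1$ weak solutions of $(\mc P_{\gamma,w})$ are in general only in $H^1_{\mathrm{loc}}(\Om)$, not in $H_0^1(\Om)$; in particular the candidate minimizers and mountain-pass points you seek need not lie in the domain of your functional at all, and $\|u_0\|$ itself is undefined. The paper's essential device is precisely to avoid this: it works with the translated variable $z=u-u_0\in H_0^1(\Om)$ and the functional $\Psi_\la(z)=K(z)+\hat H_\la(z)$, where the convex part $K(z)=\frac12\|z\|^2+\int_\Om J_0(x,z)\,dx$ uses
\[
J_0(x,t)=P(u_0(x)+t)-P(u_0(x))+t\,u_0(x)^{-\gamma},
\]
a carefully renormalized primitive that satisfies $J_0(x,0)=0$, $J_0\ge 0$, and is finite for all $t\ge -u_0(x)$. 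This cancellation is what makes the energy finite and the nonsmooth critical point theory applicable for every $\gamma>0$. Theorems \ref{t1.1} and \ref{t3.2} are formulated exactly in this translated framework; they do not identify critical points of your $\mc J_\la$ with weak solutions.

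\textbf{Second, the sub/supersolution step is wrong in sign.} You claim that the purely singular solution $u_0$ remains a subsolution for large $\la$ because ``$-\la e_1$ only helps''. But $u_0$ is a subsolution of $\mc M u=u^{-\gamma}+h(x,u)-\la e_1$ iff $0\le h(x,u_0)-\la e_1$, i.e. $\la e_1\le h(x,u_0)$; since $|h(x,u_0)|\le C(1+u_0)\le C'$, this fails once $\la$ is large. The paper does not obtain the first solution by sub/supersolutions and minimization over an order interval: it shows (Lemma \ref{t5.3}) that $\Psi_\la$ has a uniform ridge $\Psi_\la\ge \mu\la^2$ on the sphere $\|z\|=\la r$ while $\Psi_\la(0)=0$, then takes $u_2$ as the minimizer of $\Psi_\la$ on the closed ball (a local minimum, hence a critical point), and obtains $u_1$ at a strictly higher level via Theorem \ref{TMPT}. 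The Palais--Smale verification (Lemma \ref{t5.2}) and the passage from variational inequality to equation (Theorem \ref{t1.1}) then complete the argument. Your regularity sketch is in the right spirit, but note that $u\in L^\infty(\Om)$ is obtained in the paper via a convex truncation and \eqref{efli}, after which $C(\overline\Om)$ follows from the two-sided comparison $m_w u_0\le u\le M_w u_0$ rather than a separate barrier construction.
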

	 		\begin{Theorem}\label{t1.5}(\textbf{Nonexistence for small $\lambda$})
	 			Let $\gamma>0$, $\alpha>\la_1$, and $w$ be of the form $(\mc H_2)$. {Assume that $\Om \subset \mb R^n$ is a bounded $C^{1,1}$ domain.} Then there exists $\underline{\la} \in \mb R$ such that for every $\la <\underline{\la}$, the problem $(\mc P_{\gamma,w})$ has no weak solution in $H^1_{\text{loc}}(\Om)\cap L^1(\Om)$.
	 		\end{Theorem}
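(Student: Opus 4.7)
My plan is to argue by contradiction via a first-eigenfunction test. Assume $u\in H^1_{\mathrm{loc}}(\Om)\cap L^1(\Om)$ is a weak solution of $(\mc P_{\gamma,w})$ and derive a universal lower bound on $\la$ that rules out existence when $\la$ is very negative. The fundamental idea is to pair the equation for $u$ with the eigenvalue equation \eqref{mevp} for $e_1$ and exploit the self-adjointness of $\mc M$.

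Since $e_1\in H_0^1(\Om)\cap L^\infty(\Om)$ is not compactly supported in $\Om$, it cannot be substituted directly into Definition \ref{dws}. I will therefore invoke the variational characterization of Theorems \ref{t1.1} and \ref{t3.2}, which extends the admissible class of test functions well beyond $C_c^\infty(\Om)$; concretely, $e_1$ is approximated by $\varphi_k\in C_c^\infty(\Om)$ with $\varphi_k\to e_1$ in $H_0^1(\Om)$. The symmetric step, testing \eqref{mevp} against $u$, is performed by inserting the truncations $T_k(u)\in H_0^1(\Om)$ (whose membership follows because $u$ vanishes on $\pa\Om$ in the sense of Definition \ref{d2.5}) and sending $k\to\infty$, with monotone control on $\int_\Om u^{-\gamma}e_1\,dx$. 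Combining the two testings yields
\begin{equation*}
\la\I{\Om}e_1^2\,dx=\I{\Om}u^{-\gamma}e_1\,dx+\I{\Om}\bigl(h(x,u)-\la_1 u\bigr)e_1\,dx.
\end{equation*}

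Next I exploit the hypothesis $\alpha>\la_1$. Choose $\e>0$ with $\alpha-\e>\la_1$; by $(h_2)$ there exists $M>0$ such that $h(x,s)\geq(\alpha-\e)s$ for $s\geq M$, whence $h(x,s)-\la_1 s\geq 0$ whenever $s\geq M$. On $[0,M]$ the growth bound $(h_1)$ gives $|h(x,s)|\leq C(1+M)$, so a constant $K>0$ independent of $u$ satisfies $h(x,s)-\la_1 s\geq -K$ for all $s\geq 0$ and a.e.\ $x\in\Om$. Since $u>0$ and $e_1>0$ in $\Om$, the singular integrand $u^{-\gamma}e_1$ is non-negative, and the identity above forces
\begin{equation*}
\la\geq -K\,\frac{\int_\Om e_1\,dx}{\int_\Om e_1^2\,dx}=:\underline{\la},
\end{equation*}
contradicting the choice $\la<\underline{\la}$. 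The main obstacle lies in the first step: weak solutions are typically not in $H_0^1(\Om)$ when $\gamma$ is large, so neither the testing of $(\mc P_{\gamma,w})$ against $e_1$ nor the symmetric testing of \eqref{mevp} against $u$ is immediate. Both rest on the variational framework provided by Theorems \ref{t1.1} and \ref{t3.2} and on a careful treatment of the nonlocal double integral $\iint_{\mb R^{2n}}\mc B(u)\mc B(e_1)\,d\nu$, with a monotone-limit argument controlling the possible non-integrability of $u^{-\gamma}e_1$ near $\pa\Om$ (any such failure only reinforces the desired contradiction).
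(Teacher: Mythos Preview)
Your route is genuinely different from the paper's. The paper argues by contradiction through a \emph{blow-up/normalization} scheme: given a sequence $\la_k\to-\infty$ with solutions $u_k$, it sets $z_k=(u_k-u_0)/\|u_k-u_0\|$ (or $(u_0-u_k)/\la_k$, according to whether $\la_k/\|u_k-u_0\|$ stays bounded away from $0$ or not), passes to a weak limit $\hat z\ge 0$ in $H_0^1(\Om)$, uses \cite[Lemma 3.3]{C} to identify $\lim_k h_1(x,r_kz_k)/r_k=\alpha\hat z$ in $H^{-1}(\Om)$, and then tests the limit relation with $e_1$ to reach $\la_1\ge\alpha$. Your eigenfunction-pairing argument is shorter and more elementary when it applies, and it yields an explicit $\underline{\la}$. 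The paper's approach is heavier but robust with respect to the merely pointwise hypothesis $(h_2)$.

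That last point is precisely where your argument has a gap. You write ``by $(h_2)$ there exists $M>0$ such that $h(x,s)\ge(\alpha-\e)s$ for $s\ge M$'', but $(h_2)$ is stated only pointwise in $x$; the threshold is $M=M(x)$ and need not be uniformly bounded. One can build Carath\'eodory functions obeying $(h_1)$--$(h_2)$ for which $\inf_{x\in\Om,\,s\ge 0}\bigl(h(x,s)-\la_1 s\bigr)=-\infty$, so the key inequality $h(x,s)-\la_1 s\ge -K$ fails, and your lower bound on $\la$ collapses to a $u$-dependent quantity (through $\int_\Om u\,e_1\,dx$). The paper circumvents this by never needing a pointwise lower bound: the normalization forces $\|z_k\|=1$, and the pointwise limit in $(h_2)$ is upgraded to $H^{-1}$-convergence via the cited lemma. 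If you want to keep your strategy, you would need either to assume uniform convergence in $(h_2)$, or to couple the singular term $\int u^{-\gamma}e_1$ with $\int u\,e_1$ (e.g.\ via Young's inequality) to absorb the $u$-dependence---neither of which is in your current write-up.

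A second, more technical point: your claim that $T_k(u)\in H_0^1(\Om)$ does not follow from Definition~\ref{d2.5}, which only gives $(u-\e)^+\in H_0^1(\Om)$ for $\e>0$; for $\gamma>1$ one has $u\notin H^1(\Om)$ in general, so $\min(u,k)$ need not lie in $H_0^1(\Om)$. The correct object to test \eqref{mevp} against is $u-u_0\in H_0^1(\Om)$, available from Theorems~\ref{t3.2} and~\ref{t1.1}; this introduces harmless $u_0$-dependent constants into your identity but avoids the unjustified truncation step.
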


    \textbf{Organization of the Paper:} Sections 3 and 4 are dedicated to the the proof of symmetry and the proof of the main results related to jumping problems, respectively. Lastly, in the Appendix Section 5, we present the variational characterization and the decomposition result.
    
\section{Symmetry result}\label{s4}
This section is dedicated to proving the symmetry result in Theorem \ref{t1.3}, following the approach from \cite{CGS}. Throughout this section, unless stated otherwise, we assume that $w$ takes the form $(\mc H_1)$, i.e., $w=\wp(u)$, where $\wp(u)$ satisfies the hypothesis $(\mc A)$, and $\Om$ represents a bounded smooth domain in $\mb R^n$.

By Theorem \ref{t1.2}, every weak solution $u$ of the problem $(\mc P_{\gamma,w})$ can be decomposed as:
\begin{equation}\label{deco}
u=u_0+z,\quad z\in H_0^{1}(\Om),
\end{equation}
where $\Om\subset\mb R^n$ is a bounded $C^{1,1}$ domain. Here $u_0\in H^1_{\mathrm{loc}}(\Om)\cap L^1(\Om)\cap C(\overline{\Om})$ is the unique weak solution of the problem $(\mc P_{\gamma,0})$ as given by Proposition \ref{p3.1} below. Therefore, to establish the symmetry result of $u$, it suffices to demonstrate the symmetry of $u_0$ and $z$. This will be accomplished in the following steps.

To apply the moving plane technique, we fix some notations that will be used throughout the rest of this section. We denote by $x=(x_1,x_2,\ldots,x_n)$. Without loss of generality, we may assume that
\begin{align*}
	\inf_{x\in \Om} x_1 =-1.
\end{align*}
    For $\la \in (-1,1)$, we define:
    $$
    \mc G_\la:= \{x =(x_1, x_2, \cdots,x_n) \in \mb R^n: x_1 =\la\},
    $$
    and 
	\begin{align*}
		\Sigma_\la := \begin{cases}
			\{x\ \in \mb R^n : x_1 < \la\}~&\text{if}~\la \leq 0,\\
			\{x\ \in \mb R^n : x_1 > \la\}~&\text{if}~\la > 0.
		\end{cases}
	\end{align*}
	Further, for $\la \in (-1,1)$, we define $\Om _\la = \Om \cap \Sigma_\la$, $R_\la(x)=x_\la:= \{2\la -x_1, x_2,\cdots, x_n\}$ be the reflection of the point $x$ about $\mc G_\la$ and $u_\la (x) =u(x_\la)$. Note that $R_\la(\Om_\la)$ may not be contained in $\Om$. So when $\la >-1$, since $\Om_\la$ is nonempty, we set
	\begin{align*}
		\Lambda^\ast =\{\la: R_{\tilde\la}(\Om_{\tilde\la})\subset \Om~\text{for any} -1<\tilde{\la}\leq \la\},
	\end{align*}
	and we define
	\begin{align*}
		\la^\ast =\sup \Lambda^\ast.
	\end{align*}
    \subsection{Properties of $u_0$}
    \subsubsection{Existence and regularity of $u_0$}
   First, we establish the existence and regularity results for weak solutions of the purely singular problem $(\mc P_{\gamma,0})$ for any $\gamma>0$.
		\begin{prop}\label{p3.1}
			Let $\gamma>0$ and $\Om\subset\mb R^n$ be a bounded $C^{1,1}$ domain. Then the purely singular problem $(\mc P_{\gamma,0})$ has a unique weak solution $u_0 \in H^1_{\mathrm{loc}}(\Om)\cap L^1(\Om)\cap C(\Om)$ such that
			\begin{enumerate}
				\item [(i)] $u_0 \in H_0^1(\Om)\cap L^\infty(\Om)$ if $ 0 < \gamma \leq 1$, with $\inf_K u_0 >0$ for any $ K \Subset \Om$.

				\item[(ii)] $u_0 \in H^1_{\text{loc}}(\Om) \cap L^\infty(\Om) $ such that $u_0^\frac{\gamma+1}{2} \in H^1_0(\Om)$ if $\gamma >1$, with $\inf_K u_0 >0$ for any $ K \Subset \Om$.
			\end{enumerate}
			Moreover, we have 
			\begin{align}\label{e3.1}
		\|u_1\|_{L^\infty(\Om)}^{\frac{-\gamma}{\gamma+1}} u_1 \leq u_0 \leq ((\gamma +1)u_1)^\frac{1}{\gamma+1},
			\end{align}
			where $u_1\in L^\infty (\Om)\cap C^{1,\ba}(\bar{\Om})~\text{for every}~ \ba \in (0,1)$ is the unique solution to $\mc M u =1$ and $u =0$ in $\mb R^n \setminus \Om$. In particular, $u_0 \in C(\overline{\Om})$. 
		\end{prop}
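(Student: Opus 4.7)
The plan is to prove Proposition \ref{p3.1} by the classical approximation-and-monotonicity scheme, adapted to the mixed operator $\mc M$, with the solution $u_1$ of $\mc M u=1$ playing the role of a two-sided barrier. For each $k\in\N$, I would first produce a positive weak solution $u_k\in H_0^1(\Om)\cap L^\infty(\Om)$ of the regularised problem
$$
\mc M u_k = \bigl(u_k+\tfrac{1}{k}\bigr)^{-\gamma}\text{ in }\Om,\qquad u_k=0\text{ in }\mb R^n\setminus\Om,
$$
via a Schauder-fixed-point argument applied to $T(v)=\mc M^{-1}((v^++1/k)^{-\gamma})$, which is well defined since the truncated nonlinearity is bounded and continuous; $L^\infty$-control comes from standard Moser iteration for $\mc M$. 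Positivity of $u_k$ follows from positivity of the right-hand side and the weak maximum principle for $\mc M$, while $u_k\leq u_{k+1}$ is obtained by a comparison on the set $\{u_{k+1}<u_k\}$, exploiting that $s\mapsto(s+1/k)^{-\gamma}$ is decreasing in $s$ and pointwise increasing in $k$.

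\textbf{Two-sided bounds via the torsion function.} The upper bound is obtained by building a supersolution out of $u_1$. Set $\Phi(t):=((\gamma+1)t)^{1/(\gamma+1)}$ and $W:=\Phi(u_1)$. The ordinary chain rule gives
$$
-\Delta W=\Phi'(u_1)(-\Delta u_1)+\gamma\bigl((\gamma+1)u_1\bigr)^{-\frac{2\gamma+1}{\gamma+1}}|\nabla u_1|^2,
$$
while the concavity of $\Phi$ together with the pointwise integral expression for $(-\Delta)^s$ yields $(-\Delta)^s W(x)\geq\Phi'(u_1(x))(-\Delta)^s u_1(x)$. Summing and invoking $\mc M u_1=1$ produces $\mc M W\geq\Phi'(u_1)=W^{-\gamma}\geq(W+1/k)^{-\gamma}$, so $W$ is a weak supersolution of the regularised problem; Lemma \ref{tcp} then yields $u_k\leq W$. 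For the lower bound, with $c:=\|u_1\|_\infty^{-\gamma/(\gamma+1)}$, the function $cu_1$ satisfies $\mc M(cu_1)=c\leq(cu_1)^{-\gamma}$ and hence is a subsolution of the limit problem $(\mc P_{\gamma,0})$. After passing to the monotone limit $u_0:=\lim_k u_k$, Lemma \ref{tcp} delivers $cu_1\leq u_0\leq W$, which is exactly \eqref{e3.1}; the strictly positive interior lower bound $\inf_K u_0\geq c\inf_K u_1>0$ on any $K\Subset\Om$ follows from positivity and continuity of $u_1$.

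\textbf{Uniform energy estimates and passage to the limit.} For $0<\gamma\leq 1$, testing the regularised equation with $u_k\in H_0^1(\Om)$ gives
$$
\|u_k\|^2=\I\Om(u_k+\tfrac{1}{k})^{-\gamma}u_k\,dx\leq\I\Om u_k^{1-\gamma}\,dx\leq\|W\|_\infty^{1-\gamma}|\Om|,
$$
so $\{u_k\}$ is bounded in $H_0^1(\Om)$. For $\gamma>1$, I would test instead with $u_k^\gamma$ (admissible thanks to the uniform positive lower bound and uniform $L^\infty$-control): the local part equals $\frac{4\gamma}{(\gamma+1)^2}\int_\Om|\nabla u_k^{(\gamma+1)/2}|^2\,dx$, and the nonlocal part dominates $\frac{4\gamma}{(\gamma+1)^2}\iint_{\mb R^{2n}}|\mc B(u_k^{(\gamma+1)/2})|^2\,d\nu$ via the pointwise algebraic inequality $(a-b)(a^\gamma-b^\gamma)\geq\tfrac{4\gamma}{(\gamma+1)^2}(a^{(\gamma+1)/2}-b^{(\gamma+1)/2})^2$ for $a,b\geq 0$. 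Since the right-hand side of the tested equation is bounded by $|\Om|$, $\{u_k^{(\gamma+1)/2}\}$ is bounded in $H_0^1(\Om)$. Monotone convergence then produces $u_0$ with the claimed Sobolev regularity; the interior positive lower bound lets me pass to the limit in the weak formulation against $\varphi\in C_c^\infty(\Om)$ by dominated convergence on the right-hand side and by Sobolev compactness on the left. Uniqueness is immediate from Lemma \ref{tcp} applied to any two solutions (each is a mutual sub- and supersolution of the other), while $u_0\in C(\overline\Om)$ comes from the sandwich $cu_1\leq u_0\leq W$ combined with interior regularity for the equation $\mc M u_0=u_0^{-\gamma}\in L^\infty_{\loc}(\Om)$.

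\textbf{Main obstacle.} The most delicate step is verifying that $W=\Phi(u_1)$ is a supersolution at the integrated (weak) level. The concavity inequality for $(-\Delta)^s\Phi(u_1)$ holds only pointwise, so it must be paired against nonnegative test functions; this uses the $C^{1,\beta}(\overline\Om)$-regularity of $u_1$ to guarantee that $(-\Delta)^s u_1$ is meaningful pointwise, and the distance-like decay of $u_1$ near $\partial\Om$ to ensure convergence of the nonlocal tail integrals. A secondary technical point, for $\gamma>1$, is the admissibility of $u_k^\gamma$ as a test function in Definition \ref{dws} (which formally admits only $C_c^\infty(\Om)$ test functions); this is handled by density together with the uniform positive lower bound $cu_1$ so that $u_k^\gamma$ can be approximated in $H_0^1(\Om)$ by compactly supported smooth functions.
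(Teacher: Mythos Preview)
Your proposal is correct and essentially self-contained, but it differs from the paper's proof in two notable respects.

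First, the paper does not rebuild the existence theory: items (i)--(ii), uniqueness, and the interior positivity of $u_0$ are simply quoted from \cite{GUna}, and the work in the paper's proof is confined to establishing the two-sided barrier estimate \eqref{e3.1}. Your approximation scheme (regularised problems $\mc M u_k=(u_k+1/k)^{-\gamma}$, monotonicity, energy estimates by testing with $u_k$ or $u_k^\gamma$) is exactly how those cited results are obtained, so you are reproving what the paper imports.

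Second, and more interestingly, the supersolution verification for $W=((\gamma+1)u_1)^{1/(\gamma+1)}$ is handled differently. You argue pointwise: chain rule for $-\Delta W$ plus the concavity inequality $(-\Delta)^s\Phi(u_1)\geq\Phi'(u_1)(-\Delta)^s u_1$, then integrate against $\varphi\geq 0$. The paper instead works entirely at the weak level: for the local part it writes $\nabla W\cdot\nabla\varphi=V^{-\gamma}\nabla u_1\cdot\nabla\varphi\geq\nabla u_1\cdot\nabla(V^{-\gamma}\varphi)$, and for the nonlocal part it splits $\mb R^{2n}$ into $\{\varphi(x)\geq\varphi(y)\}$ and $\{\varphi(x)<\varphi(y)\}$, using the concavity of $t\mapsto t^{1/(\gamma+1)}$ together with the sign of $\mc B(u_1)\mc B(V^{-\gamma})$ to bound each piece by the corresponding piece of $\iint\mc B(u_1)\mc B(V^{-\gamma}\varphi)\,d\nu$; summing and testing $\mc M u_1=1$ with $V^{-\gamma}\varphi$ closes the argument. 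Your route is shorter and more transparent, but it relies on enough regularity of $u_1$ (here $C^{1,\beta}(\overline\Om)$ for all $\beta<1$) to give pointwise meaning to $(-\Delta)^s u_1$ and $-\Delta u_1$; the paper's integrated argument needs only the weak equation for $u_1$ and is therefore more robust. You correctly flagged this pointwise-to-weak passage as the main obstacle, and with the stated regularity of $u_1$ it does go through.
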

		\begin{proof}
			The existence, uniqueness and summability properties of the weak solution $u_0$ are established in \cite[Theorems 2.13, 2.14, 2.15, and 2.16]{GUna}. Next, we proceed to prove equation \eqref{e3.1}. From \cite[Lemma 3.1]{AroRad}, we know that the problem $\mc Mu =1$ in $\Om$, and $u =0$ in $\mb R^n \setminus \Om$, has the unique solution 
			\begin{align}\label{eu1}
				u_1 \in L^\infty (\Om)\cap C^{1,\ba}(\bar{\Om})~\text{for every}~ \ba \in (0,1)~\text {such that}~ u_1 >0~\text {in}~\Om.
			\end{align} 
			Next, we define
			\begin{align}\label{e3.2}
				v =	\|u_1\|_{L^\infty(\Om)}^{\frac{-\gamma}{\gamma+1}}u_1~\text{and}~V = ((\gamma +1)u_1)^\frac{1}{\gamma+1}.
			\end{align}                                   
			We observe that $v \leq V$ in $\Om$. Now, we first show that $v$ is a weak subsolution to problem $(\mc P_{\gamma,0})$. For this let $\varphi \in C_c^\infty(\Om)$ with $\varphi \geq 0$. Then we have
			\begin{align*}
				\I\Om& \na v \na \varphi + \iint_{\mb R^{2n}}  \mathcal{B}(v)(x,y)\mathcal{B}(\varphi)(x,y)\,d\nu\\
				&=\|u_1\|_{L^\infty(\Om)}^{\frac{-\gamma}{\gamma+1}}	\left(\I\Om \na u_1 \na \varphi + \iint_{\mb R^{2n}} \mathcal{B}(u_1)(x,y)\mathcal{B}(\varphi)(x,y)\,d\nu \right)\\
                &=\|u_1\|_{L^\infty(\Om)}^{\frac{-\gamma}{\gamma+1}} \I\Om \varphi dx\leq \I\Om {\varphi}{v^{-\gamma}}dx.
			\end{align*}
			Next, we show that $V$ is a weak supersolution to $(\mc P_{\gamma,0})$. Again for $\varphi \in C_c^\infty(\Om)$ with $\varphi \geq 0$, we have
			\begin{align}\label{e3.3}
				\I\Om \na V \na \varphi dx &= \I \Om  \na u_1 V^{-\gamma} \na \varphi dx \geq  \I{\Om}\na u_1 (V^{-\gamma}\nabla\varphi + \varphi \na V^{-\gamma})dx=\I\Om \na u_1 \na (V^{-\gamma}\varphi)dx.
			\end{align}
			We also have:
			\begin{align}\nonumber \label{e3.4}
				\iint_{\mb R^{2n}} \mathcal{B}(V)(x,y)\mathcal{B}(\varphi)(x,y)\,d\nu =&\iint_{\mb R^{2n} \cap \{\varphi(x)\geq\varphi(y)\}}\mathcal{B}(V)(x,y)\mathcal{B}(\varphi)(x,y)\,d\nu\\
				&+ \iint_{\mb R^{2n} \cap \{\varphi(x)<\varphi(y)\}}\mathcal{B}(V)(x,y)\mathcal{B}(\varphi)(x,y)\,d\nu.
			\end{align}
			We now estimate first term on the R.H.S. of \eqref{e3.4}. Since the map $t \ra t^{\frac{1}{\gamma+1}}$ for $t >0$ and $\gamma >0$ is concave, we deduce that
			\begin{align}\label{e3.5}\nonumber
				\iint_{\mb R^{2n} \cap \{\varphi(x)\geq\varphi(y)\}}\mathcal{B}(V)(x,y)&\mathcal{B}(\varphi)(x,y)\,d\nu\\ \nonumber
				\geq&	\iint_{\mb R^{2n} \cap \{\varphi(x)\geq\varphi(y)\}}V^{-\gamma}(x)\mathcal{B}(u_1)(x,y)\mathcal{B}(\varphi)(x,y)\,d\nu\\ \nonumber
				\geq& \iint_{\mb R^{2n} \cap \{\varphi(x)\geq\varphi(y)\}}\mc B(u_1)(x,y)\mc B(V^{-\gamma}\varphi)(x,y)d\nu \\ \nonumber
				&- \iint_{\mb R^{2n} \cap \{\varphi(x)\geq\varphi(y)\}}\mc B(u_1)(x,y)(V^{-\gamma}(x)-V^{-\gamma}(y))\varphi(y)d\nu\\ 
				\geq& \iint_{\mb R^{2n} \cap \{\varphi(x)\geq\varphi(y)\}}\mc B(u_1)(x,y)\mc B(V^{-\gamma}\varphi)(x,y)d\nu. 
			\end{align}
			By symmetry, using the same argument, we obtain:
			\begin{align} \label{e3.6}
				\iint_{\mb R^{2n} \cap \{\varphi(x)<\varphi(y)\}}&\mathcal{B}(V)(x,y)\mathcal{B}(\varphi)(x,y)\,d\nu
				\geq & \iint_{\mb R^{2n} \cap \{\varphi(x)<\varphi(y)\}}\mc B(u_1)(x,y)\mc B(V^{-\gamma}\varphi)(x,y)d\nu.
			\end{align}
			Using \eqref{e3.5} and \eqref{e3.6} in \eqref{e3.4} and combining it with \eqref{e3.3}, we obtain that $V$ is a weak supersolution to the problem $(\mc P_{\gamma,0})$, as claimed. Now using the definitions in \eqref{e3.2} and Lemma \ref{tcp}, we obtain \eqref{e3.1}.\\
			Finally, based on equation \eqref{e3.1}, we conclude that $u_0 \in C (\overline{\Om})$ iff $u_1 \in C(\overline{\Om})$ and $u_1 =0$ in $\mb R^n \setminus \Om$, which is indeed the case, as shown in equation \eqref{eu1}.
		\end{proof}
\subsubsection{Symmetry of $u_0$}
We have the following result in this direction.
\begin{prop}\label{pspu_0}
	Let $u_0 \in C(\overline{\Om})$ be the unique weak solution of $(\mc P_{\gamma,0})$ given by Proposition \ref{p3.1}. Then, for any $-1<\la<\la^\ast$, we have 
	\begin{align}\label{4.2}
		u_0(x) \leq u_{0_\la}(x),~\forall x \in \Om_\la.
	\end{align}
\end{prop}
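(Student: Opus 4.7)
The plan is to adapt the moving plane approach of \cite{CGS} to the mixed local-nonlocal singular framework. For $\la\in(-1,\la^\ast)$, the definition of $\la^\ast$ ensures $R_\la(\Om_\la)\subset\Om$, so the reflected function $u_{0_\la}(x):=u_0(R_\la x)$ is well-defined, continuous, and strictly positive on $\overline{\Om_\la}$. Because $|R_\la x-R_\la y|=|x-y|$, both $-\Delta$ and $(-\Delta)^s$ commute with $R_\la$, hence $u_{0_\la}$ satisfies the singular equation $\mc M u_{0_\la}=u_{0_\la}^{-\gamma}$ weakly on $\Om_\la$. The desired boundary ordering $u_0\leq u_{0_\la}$ on $\partial\Om_\la$ holds pointwise: with equality on $\mc G_\la\cap\overline{\Om_\la}$ (from the reflection) and with $0=u_0<u_{0_\la}$ on $\partial\Om\cap\overline{\Om_\la}$, since $R_\la$ maps this portion strictly into $\Om$ where $u_0>0$ by Proposition \ref{p3.1}.

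Next I would test the difference of the weak formulations for $u_0$ and $u_{0_\la}$ against a truncated function $\varphi_\e:=\big((u_0-u_{0_\la}-\e)^+\big)\chi_{\Om_\la}$, approximated by functions in $C_c^\infty(\Om_\la)$ as needed. The subtraction of $\e>0$ is essential: when $\gamma>1$, only $u_0^{(\gamma+1)/2}\in H_0^1(\Om)$ is known from Proposition \ref{p3.1}(ii), whereas $\varphi_\e$ is supported compactly inside $\Om_\la$ (thanks to the strict positivity of $u_{0_\la}$ on $\overline{\Om_\la}$) and therefore lies in $H_0^1(\Om_\la)$. Monotonicity of $t\mapsto t^{-\gamma}$ supplies the favorable sign on the support of $\varphi_\e$, yielding $\int_{\Om_\la}(u_0^{-\gamma}-u_{0_\la}^{-\gamma})\varphi_\e\,dx\leq 0$, while the local gradient term contributes $\int_{\Om_\la}|\na\varphi_\e|^2\,dx$.

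The principal technical step is handling the nonlocal contribution. Following the symmetrization scheme of \cite{BVDDS}, I would split $\iint_{\mb R^{2n}}\mc B(u_0-u_{0_\la})(x,y)\mc B(\varphi_\e)(x,y)\,d\nu$ over the four regions $\Sigma_\la\times\Sigma_\la$, $\Sigma_\la\times\Sigma_\la^c$, $\Sigma_\la^c\times\Sigma_\la$, and $\Sigma_\la^c\times\Sigma_\la^c$, apply the change of variables $(x,y)\mapsto(R_\la x,R_\la y)$ to reduce the cross-terms to integrals over $\Sigma_\la$, and exploit the kernel invariance $|x-y|^{-n-2s}=|R_\la x-R_\la y|^{-n-2s}$ to obtain a nonnegative remainder on the set $\{u_0>u_{0_\la}+\e\}$. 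Combined with the local gradient term and the sign of the singular contribution, this forces $\na\varphi_\e\equiv 0$, hence $\varphi_\e\equiv 0$ for every $\e>0$. Letting $\e\to 0^+$ and invoking the continuity $u_0\in C(\overline\Om)$ yields $(u_0-u_{0_\la})^+\equiv 0$ in $\Om_\la$, which is precisely \eqref{4.2}.

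The main obstacle, as I expect, will be the rigorous justification of the reflection/symmetrization in the nonlocal term together with the choice of an admissible test function under the limited Sobolev regularity of $u_0$ for $\gamma>1$. The strict positivity of $u_{0_\la}$ on $\overline{\Om_\la}$, obtained because $R_\la$ sends this closure into the interior of $\Om$ where $\inf_K u_0>0$ on compact sets (Proposition \ref{p3.1}), is what enables the truncation by $\e$ to remove all singular boundary behavior; this, together with the dominated convergence theorem, justifies the limit $\e\to 0^+$.
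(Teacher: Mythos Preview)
Your approach is a reasonable direct attack, but it differs substantially from the paper's proof, which is much shorter. The paper does \emph{not} run the moving plane directly on $u_0$. Instead it invokes the regularized approximations $u_k\in H_0^1(\Om)\cap C^2(\overline\Om)$ solving $\mc M u_k=(u_k+1/k)^{-\gamma}$ in $\Om$ (from \cite[Lemma 3.2]{AroRad}); since the right-hand side is now locally Lipschitz and $u_k$ is smooth, the moving plane result of \cite{BVDDS} applies verbatim to each $u_k$, yielding $u_k\le (u_k)_\la$ in $\Om_\la$. One then passes to the limit using $u_k\to u_0$ in $\Om$. This sidesteps entirely the two difficulties you flag (the limited $H^1_{\text{loc}}$ regularity of $u_0$ when $\gamma>1$, and the justification of the nonlocal symmetrization with a singular right-hand side).

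Your route is in principle viable, but note two points where your sketch is optimistic. First, \cite{BVDDS} works with classical solutions and pointwise maximum-principle arguments rather than a weak ``four-region'' symmetrization scheme, so you would need to import that machinery from the purely nonlocal literature and adapt it to the mixed operator. Second, to conclude $\na\varphi_\e\equiv 0$ you need the nonlocal contribution to be nonnegative after symmetrization; this relies on the antisymmetric extension of $u_0-u_{0_\la}$ across $\mc G_\la$ and on knowing the sign of $u_0-u_{0_\la}$ in $\Sigma_\la^c$, which you have not addressed. The approximation argument in the paper trades all of this for a one-line limit passage.
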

\begin{proof}
	By \cite[Lemma 3.2]{AroRad}, let $u_k \in H_0^1(\Om) \cap C^{2}(\overline{\Om})$ be a weak solution to
	\begin{align}\label{4.3}
		\begin{cases}
			\mc M u_k ={\left(u_k+\frac{1}{k}\right)^{-\gamma}}~&{in\,\,} \Om,\\
			u_k >0~&{in\,\,} \Om,\\
			u_k=0~&{in\,\,} \mb R^n \setminus\Om.
		\end{cases}
	\end{align}
	Therefore, we can apply the moving plane method in the same way as in \cite{BVDDS} to conclude that \eqref{4.2} holds for each $u_k$. From \cite{AroRad}, we know that $u_k\to u_0$ in $\Om$ as $k\to\infty$, and thus, the result also holds for $u_0$.
	\end{proof}
\subsection{Properties of $z$}
\subsubsection{Comparison Principles}
To apply the moving plane method and establish the symmetry properties of $z$, we first prove some comparison results.
	\begin{prop}\label{p4.2}
		Let $\gamma>0$ and $u \in H^1_{\mathrm{loc}}(\Om)\cap L^1(\Om)\cap C(\overline{\Om})$ be a weak solution to the problem $(\mc P_{\gamma,w})$.
        Let $z$ be defined by \eqref{deco}. Then it follows that
		\begin{align*}
			z>0~\text{in}~\Om.
		\end{align*}
	\end{prop}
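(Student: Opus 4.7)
The plan is to establish $z > 0$ in two stages: first deduce $z \geq 0$ by viewing $u$ itself as a supersolution of the purely singular problem $(\mathcal{P}_{\gamma,0})$ and invoking the comparison Lemma \ref{tcp}, then upgrade to strict positivity through a strong maximum principle applied to a linearized equation for $z$. The key observation is that the perturbation $\wp(u)$ is nonnegative by hypothesis $(\mathcal{A})$, which makes the comparison step essentially immediate.

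For the first stage, since $u > 0$ in $\Omega$ gives $\wp(u) \geq 0$, testing the weak formulation for $u$ against any nonnegative $\varphi \in C_c^\infty(\Omega)$ yields
$$
\int_\Omega \nabla u \nabla \varphi\,dx + \iint_{\mathbb{R}^{2n}} \mathcal{B}(u)(x,y)\mathcal{B}(\varphi)(x,y)\,d\nu \;=\; \int_\Omega u^{-\gamma}\varphi\,dx + \int_\Omega \wp(u)\varphi\,dx \;\geq\; \int_\Omega u^{-\gamma}\varphi\,dx,
$$
so $u$ is a weak supersolution of $(\mathcal{P}_{\gamma,0})$. Since $u_0$ is the weak solution of $(\mathcal{P}_{\gamma,0})$ with $u_0 = 0$ on $\partial\Omega$ by Proposition \ref{p3.1}, Lemma \ref{tcp} applied with the subsolution $u_0$ and the supersolution $u$ gives $u_0 \leq u$ in $\Omega$, i.e.\ $z \geq 0$. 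For the second stage, subtracting the weak equations satisfied by $u$ and $u_0$, the function $z \in H_0^1(\Omega)$ weakly satisfies $\mathcal{M} z = (u^{-\gamma} - u_0^{-\gamma}) + \wp(u)$ in $\Omega$. A pointwise mean value argument using $0 < u_0 \leq u$ furnishes $\xi(x) \in [u_0(x), u(x)]$ with $u_0^{-\gamma} - u^{-\gamma} = \gamma \xi^{-\gamma-1} z$, whence the weak equation for $z$ can be recast on every compact $K \Subset \Omega$ as
$$
\mathcal{M} z + c(x)\, z = \wp(u), \qquad c(x) := \gamma\, \xi(x)^{-\gamma-1} \geq 0,
$$
where $c \in L^\infty(K)$ because $\xi \geq u_0 \geq \inf_K u_0 > 0$ by Proposition \ref{p3.1}, and the right-hand side is strictly positive in $\Omega$. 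Moreover $z \not\equiv 0$, for otherwise $u \equiv u_0$ would force $\wp(u_0) \equiv 0$ in $\Omega$, contradicting $u_0 > 0$ and $(\mathcal{A})$. The strong maximum principle for the mixed operator $\mathcal{M}$ perturbed by a locally bounded nonnegative lower-order term (available in the mixed local-nonlocal framework, cf.\ \cite{Biagicpde, GK}) then yields $z > 0$ throughout $\Omega$.

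The main obstacle is justifying the linearization rigorously and applying the strong maximum principle in the mixed setting. The coefficient $c(x)$ may blow up near $\partial\Omega$ when $\gamma$ is large, but this causes no difficulty because one only needs local boundedness inside $\Omega$ to run the SMP on arbitrary compact subsets. An alternative route avoiding linearization is to compare $z$ directly with the unique $\psi \in H_0^1(\Omega)$ solving $\mathcal{M}\psi = \wp(u)$ in $\Omega$ with zero exterior data (which is strictly positive by the standard SMP for $\mathcal{M}$), and to prove $z \geq \psi$ by a subsolution/supersolution argument exploiting that $u^{-\gamma} - u_0^{-\gamma} \leq 0$ contributes favorably to the left-hand side; this yields $z \geq \psi > 0$ without invoking any Schauder-type theory for the perturbed operator $\mathcal{M} + c$.
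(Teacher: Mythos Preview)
Your main argument is correct and follows essentially the same route as the paper: both obtain $z\geq 0$ by viewing $u$ as a supersolution of $(\mathcal{P}_{\gamma,0})$ and invoking Lemma~\ref{tcp}, then linearize the singular term via the mean value theorem to write $\mathcal{M}z + c(x)z \geq 0$ on compact subsets with $c\in L^\infty_{\mathrm{loc}}$, and conclude by a strong maximum principle. The paper phrases the last step as a contradiction argument (assume $z(x_0)=0$, apply \cite[Proposition~3.3]{BV} on a ball to force $z\equiv 0$ there, then cover $\Omega$, arriving at $\wp\equiv 0$), whereas you state the SMP conclusion directly and cite \cite{Biagicpde, GK}; these are the same idea, and the paper's reference \cite{BV} is the one tailored to the precise weak-supersolution framework used here.

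Your alternative route, however, contains a sign error. Since $u\geq u_0>0$ gives $u^{-\gamma}-u_0^{-\gamma}\leq 0$, subtracting $\mathcal{M}\psi=\wp(u)$ from $\mathcal{M}z=(u^{-\gamma}-u_0^{-\gamma})+\wp(u)$ yields $\mathcal{M}(z-\psi)=u^{-\gamma}-u_0^{-\gamma}\leq 0$, so the weak maximum principle delivers $z\leq\psi$, not $z\geq\psi$. Thus this comparison does not produce a positive lower barrier for $z$, and the alternative should be discarded.
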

\begin{proof}
	Since $u \in C(\overline{\Om})$ and by Proposition \ref{p3.1}, we have $u_0 \in C(\overline{\Om})$, therefore, we obtain $z \in H_0^1(\Om) \cap C(\overline{\Om})$. Also by the hypothesis on $w$, it follows that $u$ is a weak supersolution of the equation 
	\begin{align*}
		\mc M v =v^{-\gamma}\text { in }\Om.
	\end{align*}
	Therefore by Lemma \ref{tcp}, it follows that
	\begin{align*}
		u \geq u_0~\text{in}~\Om~\text{and therefore}~z \geq 0~\text{in}~\Om.
	\end{align*}
	Next we show that $z>0$ in $\Om$. Assume there exists $x_0 \in \Om$ such that $z(x_0) =0$. We claim that
	\begin{align}\label{eclaim}
\text{	there exists}~ r>0~ \text{such that}~ z \equiv 0~\text{ on}~ B_r(x_0).
	\end{align} 
	 For this choose $R>0$ such that $B_R(x_0) \Subset \Om$. Now we show that $z$ is a weak supersolution to 
	\begin{align}\label{esupsol}
		\mc Mv + \Lambda v =0~\text{in}~B_R(x_0),
	\end{align}
	for some $\La >0$ in the sense of \cite[Definition 3.1]{BV}. For this let $ \mc O \Subset B_R(x_0)$ and $\varphi \in \chi_+^{1,2}(\mc O)$ (see \cite[page 6]{BV} for its definition). This means $ \varphi \geq 0$ on $\mc O$, $\varphi =0$ on $\mb R^n \setminus \mc O$ and $\varphi_{\vert_{\mc O}} \in H^1_0(\mc O)$. Again, since $\mc O \Subset B_R(x_0) \Subset \Om$ and $\varphi =0$ in $\mb R^n \setminus \mc O$, we conclude that $ \varphi \in H_c^1(\Om)$. Now
	\begin{equation}\label{e4.3}
    \begin{split}
		&\I{\mc O}\na z \na \varphi dx + \iint_{\mb R^{2n}}\mathcal{B}(z)(x,y)\mathcal{B}(\varphi)(x,y)\,d\nu\\
&		 =	\I{\Om} \na z \na \varphi dx + \iint_{\mb R^{2n}}\mathcal{B}(z)(x,y)\mathcal{B}(\varphi)(x,y)\,d\nu\\
		  &= \I\Om\left({(u_0+z)^{-\gamma}}+\wp(u)-{u_0^{-\gamma}}\right)\varphi dx\\
          &=\I{\mc O}\left({(u_0+z)^{-\gamma}}+\wp(u)-{u_0^{-\gamma}}\right)\varphi dx\geq  \I{\mc O}\left({(u_0+z)^{-\gamma}}-{u_0^{-\gamma}}\right)\varphi dx.
		 	\end{split}
            \end{equation}
	Using Proposition \ref{p3.1}, there exists a constant $C_{B_R(x_0)}>0$ such that
	\begin{align}\label{e4.4}
	u_0 \geq C_{B_R(x_0)}>0~\text{on}~B_R(x_0).
	\end{align}
	Now using Mean value theorem and \eqref{e4.4}, we infer that
	\begin{align*}
	{(u_0+z)^{-\gamma}}-{u_0^{-\gamma}} =a(x)z,
	\end{align*}
	for some bounded coefficient $a(x)$ which depends only on $B_R(x_0)$. Thus we can find $\La >0$ independent of $\mc O$ such that
	\begin{align}\label{e4.5}
		{(u_0+z)^{-\gamma}}-{u_0^{-\gamma}} +\La z \geq 0.
		\end{align}
	Combining \eqref{e4.3} and \eqref{e4.5} we obtain
	\begin{align*}
		\mc  Mz +\La z \geq 0,~ \forall \varphi \in \chi_+^{1,2}(\mc O)~\text{and}~\forall \mc O \Subset B_R(x_0).
	\end{align*}
	This proves that $z$ is a weak supersolution of \eqref{esupsol}. Furthermore since $z \geq 0$ on $\Om$, we are entitled to apply \cite[Proposition 3.3]{BV} and hence we have the required claim \eqref{eclaim}.\\
	Finally by a covering argument we infer that $z\equiv 0$ on $\Om$ which implies $\wp (\cdot) =0$ and we get a contradiction.
	\end{proof}
	Next we give a weak comparison principle for the narrow domains.
	\begin{prop}\label{p4.3}
		Let $\gamma>0$, $\la \in (-1,\la^\ast)$ and $\tilde{\Om} \subset \Om_\la$. Assume that $u \in H^1_{\mathrm{loc}}(\Om)\cap L^1(\Om)\cap C(\overline{\Om})$ is a weak solution of $(\mc P_{\gamma,w})$. Let $z$ be given by \eqref{deco} and suppose that
		\begin{align*}
			z \leq z_\la~\text{on}~\partial \tilde{\Om}.
		\end{align*}
		Then there exists a positive constant $\de =\de(u,\wp)$ such that, if $|\tilde{\Om}| \leq \delta$, then
		\begin{align*}
			z \leq z_\la~\text{in}~\tilde{\Om}.
		\end{align*}
	\end{prop}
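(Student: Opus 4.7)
The strategy is to test the equation satisfied by $z-z_\la$ on $\tilde{\Om}$ with the truncation $\phi=(z-z_\la)^+$, extended by zero outside $\tilde{\Om}$, and then to exploit the smallness of $|\tilde{\Om}|$ via a Sobolev-type inequality. Since $z=u-u_0\in H_0^1(\Om)\cap C(\overline{\Om})$ (by Theorem \ref{t1.2} and Proposition \ref{p3.1}), and $z\leq z_\la$ on $\partial\tilde{\Om}$ by hypothesis, the function $\phi$ belongs to $H^1_0(\tilde{\Om})$ and is thus an admissible test function.

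I would first derive the weak equations for $z$ and $z_\la$ separately. Subtracting the equations for $u$ and $u_0$ yields, for every $\varphi\in C_c^\infty(\Om)$,
\begin{equation*}
\I{\Om}\na z\cdot\na\varphi\,dx+\iint_{\mb R^{2n}}\mc B(z)\mc B(\varphi)\,d\nu=\I{\Om}\bigl[u^{-\gamma}-u_0^{-\gamma}+\wp(u)\bigr]\varphi\,dx.
\end{equation*}
Since $\la<\la^\ast$ ensures $R_\la(\Om_\la)\subset\Om$, the substitution $\varphi\mapsto\varphi\circ R_\la$ turns a $C_c^\infty(\Om_\la)$ test function into a legitimate one for the above. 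Using reflection invariance of $\Delta$ together with the isometry property of the kernel $|x-y|^{-n-2s}$, an isometric change of variable on $\Om$ and on $\mb R^{2n}$ then produces the analogous identity for $z_\la$ on $\Om_\la$, with right-hand side $u_\la^{-\gamma}-u_{0,\la}^{-\gamma}+\wp(u_\la)$.

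Subtracting these two identities and inserting $\phi$, the local part of the left-hand side becomes $\int_{\tilde{\Om}}|\na\phi|^2\,dx$, and a standard splitting (using that $\phi$ and $(z-z_\la)^-$ have disjoint supports) yields that the nonlocal contribution is bounded below by $\iint|\mc B(\phi)|^2\,d\nu\geq 0$. On the right-hand side, the singular combination $u^{-\gamma}-u_0^{-\gamma}-u_\la^{-\gamma}+u_{0,\la}^{-\gamma}$ is nonpositive on $\{\phi>0\}=\{z>z_\la\}$: by Proposition \ref{pspu_0} one has $0\leq u_0\leq u_{0,\la}$ there, and Lemma \ref{l4.1} applied with $x=u_0$, $j=u_{0,\la}$, $y=z$, $m=z_\la$ gives exactly
\begin{equation*}
(u_{0,\la}+z_\la)^{-\gamma}-u_{0,\la}^{-\gamma}\geq (u_0+z)^{-\gamma}-u_0^{-\gamma}.
\end{equation*}
For the perturbation, on $\{u>u_\la\}\cap\{\phi>0\}$ one has $0<u-u_\la=\phi-(u_{0,\la}-u_0)\leq\phi$, while $\wp(u)-\wp(u_\la)\leq 0$ on the remainder of $\{\phi>0\}$ by monotonicity of $\wp$; since $u\in L^\infty(\Om)$ and $\wp$ is locally Lipschitz, this gives $(\wp(u)-\wp(u_\la))\phi\leq L\phi^2$ pointwise for a constant $L=L(u,\wp)$.

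Combining the estimates yields $\int_{\tilde{\Om}}|\na\phi|^2\,dx\leq L\int_{\tilde{\Om}}\phi^2\,dx$, and Sobolev embedding followed by H\"older's inequality gives $\|\phi\|_{L^2(\tilde{\Om})}^2\leq C(n)|\tilde{\Om}|^{2/n}\|\na\phi\|_{L^2(\tilde{\Om})}^2$. Choosing $\de>0$ so that $LC(n)\de^{2/n}<1$ forces $\phi\equiv 0$, which proves $z\leq z_\la$ in $\tilde{\Om}$. The main technical obstacle will be the careful derivation of the weak equation for the reflected function $z_\la$: one must extend $z_\la$ suitably to all of $\mb R^n$ so that the nonlocal bilinear form is well defined, and use $R_\la(\Om_\la)\subset\Om$ combined with an isometric change of variable on $\mb R^{2n}$ to ensure that the reflected test function $\varphi\circ R_\la$ lands in the admissible class for $z$.
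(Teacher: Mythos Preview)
Your proposal is correct and follows essentially the same route as the paper: derive the equation for $z-z_\la$, test with $(z-z_\la)^+$, drop the nonlocal cross-term by sign, handle the singular part via Lemma~\ref{l4.1} together with $u_0\leq u_{0,\la}$ from Proposition~\ref{pspu_0}, bound the perturbation using the local Lipschitz continuity of $\wp$, and conclude with a Poincar\'e/Sobolev-type inequality exploiting the smallness of $|\tilde{\Om}|$. The only cosmetic differences are that the paper approximates $(z-z_\la)^+$ by $C_c^\infty$ functions before passing to the limit, bounds the $\wp$-term via the intermediate comparison $\wp(u_0+z)-\wp(u_{0_\la}+z_\la)\leq \wp(u_{0_\la}+z)-\wp(u_{0_\la}+z_\la)\leq C|z-z_\la|$ rather than your case-split on $\{u>u_\la\}$, and invokes the Poincar\'e inequality directly rather than Sobolev followed by H\"older.
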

	\begin{proof}
		We have
		\begin{align}\label{e4.8}
			\mc M(u_0 +z) ={(u_0+z)^{-\gamma}}+\wp(u_0 +z)~\text{in}~\Om,
		\end{align}
		and
			\begin{align}\label{e4.9}
			\mc M(u_{0_\la} +z_\la) ={(u_{0_\la}+z_\la)^{-\gamma}}+\wp(u_{0_\la} +z_\la)~\text{in}~\Om.
		\end{align}
	From the given condition, since $z \leq z_\la~\text{on}~\partial \tilde{\Om}$, we have $(z-z_\la)^+ \in H_0^1(\tilde{\Om})$ and so we can consider a sequence of positive functions $\{\varphi_k\}_{k \in \mb N}$ such that
		\begin{align*}
			\varphi_k \in C_c^\infty(\tilde{\Om})~\text{and}~\varphi_k \ra (z-z_\la)^+~\text{in}~H_0^1(\tilde{\Om}).
		\end{align*}
		We can also assume that supp\,$\varphi_k \subset$ supp\,$(z-z_\la)^+$. Test \eqref{e4.8} and \eqref{e4.9} with $\varphi_k$ and subtracting, we get
		\begin{align}\nonumber\label{e4.10}
		\I{\tilde{\Om}}\na(u_0 +z)-\na(u_{0_\la}+z_\la)dx +\iint_{\mb R^{2n}} (\mc B(u_0+z)-\mc B(u_{0_\la}+z_\la)(x,y))\mc B(\varphi_k)(x,y)d\nu\\
		=\I{\tilde{\Om}}\left({(u_0+z)^{-\gamma}}+\wp(u_0 +z)-{(u_{0_\la}+z_\la)^{-\gamma}}-\wp(u_{0_\la} +z_\la)\right)\varphi_k dx.
			\end{align}
	Since $u_0$ and $u_{0_\la}$ solve $(\mc P_{\gamma,0})$, we deduce from \eqref{e4.10} that
	\begin{align}\label{e4.11}\nonumber
	&\I{\tilde{\Om}} \na(z-z_\la)\na \varphi_k dx +\iint_{\mb R^{2n}} \mc B(z-z_\la)(x,y)\mc B (\varphi)(x,y)d \nu\\		 &=\I{\tilde{\Om}}\left({u_{0_\la}^{-\gamma}}-{u_{0}^{-\gamma}}+{(u_0+z)^{-\gamma}}-{(u_{0_\la}+z_\la)^{-\gamma}}\right) \varphi_k dx +\I{\tilde{\Om}}\left(\wp(u_0 +z)-\wp(u_{0_\la} +z_\la)\right)\varphi_k dx.
	\end{align}
	Since $u_0 \leq u_{0_\la}$ in $\Om_\la$ (see \eqref{4.2}) and $z \geq z_\la$ on the support of $\varphi_k$, by applying Lemma \ref{l4.1} with $x =u_0$, $y=z$, $j =u_{0_\la}$ and $m =z_\la$ we get
	\begin{align*}
	u_0^\gamma(u_0+z)^\gamma(u_{0_\la}+z_\la)^\gamma + u_0^\gamma u_{0_\la}^\gamma(u_{0_\la}+z_\la)^\gamma-	u_{0_\la}^\gamma(u_0+w)^\gamma(u_{0_\la}+z_\la)^\gamma -u_0^\gamma u_{0_\la}^\gamma(u_{0}+z)^\gamma \leq  0,
	\end{align*}
	and so 
	\begin{align}\label{4.14}
	{u_{0_\la}^{-\gamma}}-{u_{0}^{-\gamma}}+{(u_0+z)^{-\gamma}}-{(u_{0_\la}+z_\la)^{-\gamma}} \leq 0.
	\end{align}
	Therefore, by using the assumption $(\mc A)$ and \eqref{4.14} in \eqref{e4.11}, we can find a constant $C>0$ such that
	\begin{align*}
	&\I{\tilde{\Om}} \na(z-z_\la)\na \varphi_k dx +\iint_{\mb R^{2n}} \mc B(z-z_\la)(x,y)\mc B (\varphi)(x,y)d \nu\\
	 &\leq 	\I{\tilde{\Om}}\left(\wp(u_0 +z)-\wp(u_{0_\la} +z_\la)\right)\varphi_k dx\\
&	 \leq \I{\tilde{\Om}}\left(\wp(u_{0_\la} +z)-\wp(u_{0_\la} +z_\la)\right)\varphi_k dx\leq C \I{\tilde{\Om}} |z-z_\la|\varphi_k\,dx.
	\end{align*}
	We now pass to the limit $k \ra \infty$ in above inequality to get
	\begin{align*}
	\I{\tilde{\Om}} | \na(z-z_\la)^+|^2dx \leq 	\I{\tilde{\Om}} | \na(z-z_\la)^+|^2dx +[(z-z_\la)^+]^2_{s,\mb R^n} \leq C\I{\tilde{\Om}}|(z-z_\la)^+|^2dx
	\end{align*}
	and finally by the Poincar\'{e} inequality, we get
	\begin{align*}
	\I{\tilde{\Om}} | \na(z-z_\la)^+|^2dx \leq C C'(\tilde{\Om}) \I{\tilde{\Om}} | \na(z-z_\la)^+|^2dx,
	\end{align*}
where $C^\prime(\tilde{\Om}) \ra 0$ as $|\tilde{\Om}| \ra 0$. Thus there exists  $\delta$ small such that $|\tilde{\Om}|<\delta$ implies $CC'(\tilde{\Om}) <1$ and so $(z-z_\la)^+ =0$ in $\tilde{\Om}$. This completes the proof.
\end{proof}
	In the following lemma, we give a proof of a Strong Comparison Principle.
	\begin{Lemma}\label{l4.4}
		Let $u \in H^1_{\mathrm{loc}}(\Om)\cap L^1(\Om)\cap C(\overline{\Om})$ be a weak solution to problem $(\mc P_{\gamma,w})$ with $\wp(\cdot)$ satisfying $(\mc A)$. Let $z$ be given by \eqref{deco} and assume that for some $\la \in (-1,\la^\ast)$, we have
		\begin{align*}
			z \leq z_\la~\text{in}~\Om_\la.
			\end{align*}
			Then $z<z_\la$ in $\Om_\la$ unless $z \equiv z_\la$ in $\Om_\la$.
	\end{Lemma}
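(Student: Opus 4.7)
The plan is to derive a linear weak differential inequality for $\omega := z_\la - z$ on $\Om_\la$ and then invoke a strong minimum principle, broadly following the strategy of \cite{CGS}. Since $u_0$ solves $(\mc P_{\gamma,0})$ and $u = u_0 + z$ solves $(\mc P_{\gamma,w})$ with $w=\wp(u)$, subtracting the two weak equations yields $\mc M z = (u_0+z)^{-\gamma} - u_0^{-\gamma} + \wp(u_0+z)$ in $\Om$. Because $-\Delta$ and the kernel $|x-y|^{-n-2s}$ are invariant under the reflection $R_\la$, and because $R_\la(\Om_\la) \subset \Om$ for $\la<\la^\ast$, the reflected function $z_\la$ satisfies the analogous identity on $\Om_\la$, and subtracting gives
\[
\mc M \omega = \bigl[(u_{0_\la}+z_\la)^{-\gamma} - (u_0+z)^{-\gamma}\bigr] + \bigl[u_0^{-\gamma} - u_{0_\la}^{-\gamma}\bigr] + \bigl[\wp(u_{0_\la}+z_\la) - \wp(u_0+z)\bigr]\quad\text{in }\Om_\la,
\]
in the weak sense against test functions in $C_c^\infty(\Om_\la)$.

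Next, I exploit Lemma \ref{l4.1} to control the singular differences. Since $u_0 \leq u_{0_\la}$ on $\Om_\la$ by Proposition \ref{pspu_0} and $z \leq z_\la$ by hypothesis, the quadruple $(u_0, z_\la, u_{0_\la}, z)$ lies in the domain $U$, so that $\Re_\gamma(u_0, z_\la, u_{0_\la}, z)\leq 0$. Dividing by $u_0^\gamma u_{0_\la}^\gamma (u_0+z_\la)^\gamma (u_{0_\la}+z)^\gamma$ one obtains the key lower bound
\[
u_0^{-\gamma} - u_{0_\la}^{-\gamma} + (u_{0_\la}+z_\la)^{-\gamma} - (u_0+z)^{-\gamma} \geq \bigl[(u_0+z_\la)^{-\gamma} - (u_0+z)^{-\gamma}\bigr] + \bigl[(u_{0_\la}+z_\la)^{-\gamma} - (u_{0_\la}+z)^{-\gamma}\bigr].
\]
By the Mean Value Theorem the right-hand side equals $-\gamma(\tau_1^{-\gamma-1}+\tau_2^{-\gamma-1})\omega$ for intermediate values $\tau_i\geq u_0+z$. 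Proposition \ref{p3.1} guarantees $u_0\geq c_K>0$ on every $K\Subset \Om_\la$, so $\tau_i^{-\gamma-1}\in L^\infty_{\mathrm{loc}}(\Om_\la)$; the $\wp$-term in the expression for $\mc M\omega$ is non-negative by monotonicity. Combining, there exists $c\in L^\infty_{\mathrm{loc}}(\Om_\la)$ such that
\[
\mc M\omega + c(x)\,\omega \geq 0 \quad\text{weakly in }\Om_\la.
\]

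For the final step, note that $\omega$ is antisymmetric with respect to $\mc G_\la$, i.e.\ $\omega\circ R_\la = -\omega$, and in fact $\omega \geq 0$ throughout the full half-space $\Sigma_\la$: for $x\in\Sigma_\la\setminus\Om$ one has $z(x)=0$ and $z_\la(x)=z(R_\la x)\geq 0$. Assume for contradiction that $\omega\not\equiv 0$ on $\Om_\la$ and $\omega(x_0)=0$ at some $x_0\in\Om_\la$, and fix a ball $B\Subset \Om_\la$ centred at $x_0$. For antisymmetric $\omega$ one has the representation
\[
(-\Delta)^s\omega(x) \;=\; \omega(x)\!\int_{\Sigma_\la}\!\bigl(|x-y|^{-n-2s}+|x-R_\la y|^{-n-2s}\bigr)\,dy \;-\; \int_{\Sigma_\la}\!\omega(y)\,\bigl(|x-y|^{-n-2s}-|x-R_\la y|^{-n-2s}\bigr)\,dy,
\]
where the reflected kernel $|x-y|^{-n-2s}-|x-R_\la y|^{-n-2s}$ is strictly positive on $\Sigma_\la\times\Sigma_\la$. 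Feeding this identity into the inequality $\mc M\omega+c\omega\geq 0$, the standard strong minimum principle for $-\Delta$ on $B$ first forces $\omega\equiv 0$ on a neighbourhood of $x_0$; the strict positivity of the reflected kernel then propagates this to $\omega\equiv 0$ in $\Sigma_\la$, hence in $\Om_\la$, contradicting the assumption.

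The principal obstacle is carrying out the strong minimum principle in the one-sided region $\Om_\la$: since $\omega$ is not globally non-negative on $\mb R^n$, \cite[Proposition 3.3]{BV} cannot be invoked as in Proposition \ref{p4.2}. It is precisely the antisymmetric extension of $\omega$, together with the elementary inequality $|x-y|\leq |x-R_\la y|$ on $\Sigma_\la\times\Sigma_\la$, that restores the positivity structure needed to treat the nonlocal part, yielding the strong comparison dichotomy $z<z_\la$ or $z\equiv z_\la$ on $\Om_\la$.
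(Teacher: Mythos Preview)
Your approach differs from the paper's in two places. For the singular terms, the paper does not use Lemma~\ref{l4.1}; it decomposes the difference as
\[
\bigl[u_0^{-\gamma}-u_{0_\la}^{-\gamma}+(u_{0_\la}+z)^{-\gamma}-(u_0+z)^{-\gamma}\bigr]+\bigl[(u_{0_\la}+z_\la)^{-\gamma}-(u_{0_\la}+z)^{-\gamma}\bigr],
\]
notes that the first bracket is non-negative because $s\mapsto a^{-\gamma}-b^{-\gamma}+(b+s)^{-\gamma}-(a+s)^{-\gamma}$ is increasing on $[0,\infty)$ whenever $0<a\leq b$, and bounds only the second bracket below by $-\La\,\omega_\la$ via the Mean Value Theorem. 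This is more elementary than your route through $\Re_\gamma$ but lands on the same inequality $\mc M\omega_\la+\La\,\omega_\la\geq 0$ on a ball $B_R(x_0)\Subset\Om_\la$. For the strong comparison, the paper then simply checks that $\omega_\la$ is a weak supersolution of $\mc Mv+\La v=0$ on $B_R(x_0)$ in the sense of \cite[Definition~3.1]{BV} and invokes \cite[Proposition~3.3]{BV} directly, concluding $\omega_\la\equiv 0$ on a small ball and hence on all of $\Om_\la$ by a covering argument.

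Your objection that $\omega_\la$ is not non-negative on all of $\mb R^n$, so that a generic nonlocal strong maximum principle may fail, is a fair point to raise. However, your proposed remedy has a genuine gap. The pointwise representation you write for $(-\Delta)^s\omega$ is formal: the integral $\omega(x)\int_{\Sigma_\la}\bigl(|x-y|^{-n-2s}+|x-R_\la y|^{-n-2s}\bigr)\,dy$ diverges, so the identity only makes sense after a principal-value regularisation you do not carry out, and evaluating it at $x_0$ requires regularity of $\omega$ that has not been established. More seriously, the claim that ``the standard strong minimum principle for $-\Delta$ on $B$ first forces $\omega\equiv 0$ on a neighbourhood of $x_0$'' does not follow from $\mc M\omega+c\,\omega\geq 0$: you cannot isolate the Laplacian, because the nonlocal contribution is neither a bounded potential times $\omega$ nor a source of fixed sign until $\omega$ is already known to vanish. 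To make your antisymmetric strategy rigorous you should stay in the weak formulation and appeal to (or reproduce) the antisymmetric strong maximum principle for the mixed operator $\mc M$ developed in \cite{BVDDS}, rather than the heuristic pointwise computation.
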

	\begin{proof}
		Let us assume that there exist $x_0 \in \Om_\la$ such that $z(x_0) =z_\la(x_0)$ and let $R =R(x_0)>0$ such that $B_R(x_0) \Subset \Om_\la$ and $B_R(x_0) \Subset \Om$. Letting $\omega_\la =z_\la -z$, we claim that
		\begin{align}\label{Ec}
			 \omega_\la ~\text{is a weak supersolution of \eqref{esupsol} in the sense of \cite[Definition 3.1]{BV}}.
			 \end{align}
			  For this let $\mc O \Subset B_R(x_0)$ and $\varphi \in \chi_+^{1,2}(\mc O)$ (where $\chi_+^{1,2}(\mc O)$ is as in Proposition \ref{p4.2}). Again as in Proposition \ref{p4.2} we have $\varphi \in H_c^1(\Om)$ with $\varphi \geq 0$ in $\Om$. Now
		\begin{align}\nonumber\label{e4.14}
			\I{\mc O}& \na \omega_\la \na \varphi dx + \iint_{\mb R^{2n}}\mathcal{B}(\omega_\la)(x,y)\mathcal{B}(\varphi)(x,y)\,d\nu
			=\I{\Om} \na \omega_\la \na \varphi dx + \iint_{\mb R^{2n}}\mathcal{B}(\omega_\la)(x,y)\mathcal{B}(\varphi)(x,y)\,d\nu\\ \nonumber
			=& \I{\mc O} \left({u_0^{-\gamma}}-{u_{0_\la}^{-\gamma}}+{(u_{0_\la}+z)^{-\gamma}}-{(u_0+z)^{-\gamma}}\right)\varphi dx + \I{\mc O} (\wp(u_{0_\la}+z_\la)-\wp(u_0+z)\varphi dx\\
			&+\I{\mc O} \left({(u_{0_\la}+z_\la)^{-\gamma}}-{(u_{0_\la}+z)^{-\gamma}}\right)\varphi dx.
		\end{align}
		Since for $0< a\leq b$ the function $h(s):=a^{-\gamma}-b^{-\gamma}+(b+s)^{-\gamma}-(a+s)^{-\gamma}$ is increasing in $[0,\infty)$, we have
		\begin{align}\label{e4.15}
			\left({u_0^{-\gamma}}-{u_{0_\la}^{-\gamma}}+{(u_{0_\la}+z)^{-\gamma}}-{(u_0+z)^{-\gamma}}\right) \geq 0.
		\end{align}
		Moreover since $\wp$ is nondecreasing, $u_0 \leq u_{0_\la}$ (see \eqref{4.2}) in $\Om_\la$ and $z\leq z_\la$, we have
		\begin{align}\label{e4.16}
			\wp(u_{0_\la}+z_\la)-\wp(u_0+z) \geq 0.
			\end{align}
			Using \eqref{e4.15} and \eqref{e4.16} in \eqref{e4.14} we obtain
			      \begin{equation}\label{e4.17}
                \begin{split}
					\I{\mc O} \na \omega_\la \na \varphi dx + \iint_{\mb R^{2n}}\mathcal{B}(\omega_\la)(x,y)\mathcal{B}(\varphi)(x,y)\,d\nu&\geq \I{\mc O} \left({(u_{0_\la}+z_\la)^{-\gamma}}-{(u_{0_\la}+z)^{-\gamma}}\right)\varphi dx.
                \end{split}
				\end{equation}
				Since $u_{0_\la} \geq u_0 \geq C_{B_R(x_0)}>0$ (see Proposition \ref{p3.1}), by arguing as in Proposition \ref{p4.2}, we find $\La >0$ independent of $\mc O$ such that
				\begin{align*}
				{(u_{0_\la}+z_\la)^{-\gamma}}-{(u_{0_\la}+z)^{-\gamma}} +\La \omega_\la \geq 0.
				\end{align*}
				This in combination with \eqref{e4.17} proves our claim \eqref{Ec}. Thus by \cite[Proposition 3.3]{BV} there exists $r>0$ such that $\omega_\la \equiv 0$ in $B_r(x_0)$, and by a covering argument $z_\la \equiv z$ in $\Om_\la$. This completes the proof.
		\end{proof}
		\subsubsection{Symmetry of $z$}
		\begin{prop}\label{p4.5}
			Let $\gamma>0$ and $u \in H^1_{\mathrm{loc}}(\Om)\cap L^1(\Om)\cap C(\overline{\Om})$ be a weak solution to $(\mc P_{\gamma,w})$. Assume that $z$ is given by \eqref{deco}. Then for any $\la \in (-1,\la^\ast)$ we have
			\begin{align*}
				z(x) < z_\la(x),~ \forall x \in \Om_\la.
			\end{align*}
		\end{prop}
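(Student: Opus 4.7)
The plan is to run a moving plane argument on $z$, using the comparison tools established above for the $z$-component of the decomposition. Define
\begin{align*}
\Lambda_0:=\{\lambda\in(-1,\lambda^\ast):\, z\leq z_{\tilde\lambda}\text{ in }\Omega_{\tilde\lambda}\text{ for every }-1<\tilde\lambda\leq\lambda\},
\end{align*}
and set $\lambda_0:=\sup\Lambda_0$ (with the convention $\lambda_0=-1$ if $\Lambda_0=\emptyset$). The proof then splits into: (i) a starting step showing $\Lambda_0\neq\emptyset$; (ii) showing $\lambda_0=\lambda^\ast$; and (iii) upgrading the weak inequality to a strict one.

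For (i), I would first verify the boundary condition $z\leq z_\lambda$ on $\partial\Omega_\lambda$. Writing $\partial\Omega_\lambda=(\partial\Omega\cap\overline{\Sigma_\lambda})\cup(\Omega\cap\mathcal{G}_\lambda)$, on the first piece $z=0$ while $z_\lambda(x)=z(x_\lambda)\geq 0$ (and $>0$ whenever $x_\lambda\in\Omega$, by Proposition \ref{p4.2}), and on the second piece $z=z_\lambda$ trivially. Since $|\Omega_\lambda|\to 0$ as $\lambda\to -1^+$, the narrow-domain comparison Proposition \ref{p4.3} applies and gives $z\leq z_\lambda$ in $\Omega_\lambda$ for all $\lambda$ close to $-1$; hence $\Lambda_0$ is nonempty.

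For (ii), I would argue by contradiction and assume $\lambda_0<\lambda^\ast$. By continuity of $z$ and of $\lambda\mapsto z_\lambda$, one first obtains $z\leq z_{\lambda_0}$ in $\Omega_{\lambda_0}$. The Strong Comparison Principle Lemma \ref{l4.4} then forces either $z<z_{\lambda_0}$ in $\Omega_{\lambda_0}$ or $z\equiv z_{\lambda_0}$ there; the second alternative is ruled out because at any point $x\in\partial\Omega\cap\overline{\Sigma_{\lambda_0}}$ whose reflection $x_{\lambda_0}$ lies inside $\Omega$ we would get $0=z(x)=z_{\lambda_0}(x)=z(x_{\lambda_0})>0$ by Proposition \ref{p4.2}, a contradiction. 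Given the strict inequality $z<z_{\lambda_0}$, I would then fix $\delta>0$ as in Proposition \ref{p4.3} and pick a compact set $K\Subset\Omega_{\lambda_0}$ with $|\Omega_{\lambda_0}\setminus K|<\delta/2$; on $K$ one has $z_{\lambda_0}-z\geq\varepsilon_0>0$, so by continuity there is $\eta>0$ such that for $\lambda\in[\lambda_0,\lambda_0+\eta)$ one still has $z_\lambda-z\geq\varepsilon_0/2$ on $K\cap\Omega_\lambda$ while $|\Omega_\lambda\setminus K|<\delta$. On the set $\tilde\Omega:=\Omega_\lambda\setminus K$ the boundary inequality $z\leq z_\lambda$ on $\partial\tilde\Omega$ holds (on $\partial K$ by the pointwise estimate and on $\partial\Omega_\lambda$ as in step (i)), so Proposition \ref{p4.3} gives $z\leq z_\lambda$ in $\tilde\Omega$, hence in all of $\Omega_\lambda$. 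This contradicts the maximality of $\lambda_0$, proving $\lambda_0=\lambda^\ast$.

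Finally, for (iii), for any fixed $\lambda\in(-1,\lambda^\ast)$ we now have $z\leq z_\lambda$ in $\Omega_\lambda$, and Lemma \ref{l4.4} again yields the dichotomy $z<z_\lambda$ in $\Omega_\lambda$ or $z\equiv z_\lambda$; the equality case is excluded by the same boundary argument as in (ii), completing the proof. The main technical obstacle I foresee is step (ii): one must handle the combination of the narrow-domain principle of Proposition \ref{p4.3}, which requires the boundary inequality $z\leq z_\lambda$ on $\partial\tilde\Omega$, together with the nonlocal character of the operator, for which it is crucial that the tool in Proposition \ref{p4.3} was formulated exactly in terms of comparing $z$ and $z_\lambda$ on subdomains $\tilde\Omega\subset\Omega_\lambda$.
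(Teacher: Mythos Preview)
Your proposal is correct and follows essentially the same moving-plane strategy as the paper: start the plane near $-1$ via Proposition \ref{p4.3}, define $\lambda_0=\sup\Lambda_0$, and if $\lambda_0<\lambda^\ast$ use Lemma \ref{l4.4} to get $z<z_{\lambda_0}$, then choose a compact $K$ so that $\Omega_{\lambda_0+\varepsilon}\setminus K$ is narrow and apply Proposition \ref{p4.3} there to contradict maximality. The only cosmetic difference is that you make the final upgrade to strict inequality explicit in a separate step (iii), whereas the paper ends immediately at $\lambda_0=\lambda^\ast$; your version is arguably cleaner on this point.
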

		\begin{proof}
			Let $\la >-1$. Since ${z} >0$ in $\Om$ by Proposition \ref{p4.2}, we have
			\begin{align*}
				z \leq z_\la~\text{on}~\partial \Om_\la.
			\end{align*}
			Therefore assuming $\la$ close to $-1$ we have $|\Om_\la|$ is sufficiently small, so we are entitled to apply Proposition \ref{p4.3} to get 
			\begin{align}\label{e4.22}
				z \leq z_\la~\text{in}~\Om_\la,~\text{with}~\la~\text{sufficiently close to}~-1,
				\end{align}
				and finally by the Strong Comparison Principle (Lemma \ref{l4.4}) we have $z<z_\la$ in $\Om_\la$, with $\la$ sufficiently close to $-1$.\\
				Let us define 
				\begin{align*}
					\La_0 = \{\la>-1:z \leq z_{\tilde{\la}}~\text{in}~\Om_{\tilde{\la}}~\text{for all}~\tilde{\la}\in(-1,\la)\},
				\end{align*}
				which is not empty thanks to \eqref{e4.22}. Let us set
				\begin{align*}
					\la_0 = \sup \La_0.
				\end{align*}
				Note that to prove our result we have to show that actually $\la_0 =\la^\ast$. On the contrary suppose that $\la_0 <\la^\ast$. Then by continuity, we obtain $z \leq z_{\la_0}$ in $\Om_{\la_0}$. Then in view of Lemma \ref{l4.4}, we have either $z < z_{\la_0}$ in $\Om_{\la_0}$ or $z=z_{\la_0}$ in $\Om_{\la_0}$. But $ z=z_{\la_0}$ is not possible because of the zero Dirichlet boundary condition and the fact that $z >0$ in $\Om$ from Proposition \ref{p4.2}. Thus $z<z_{\la_0} $ holds in $\Om_{\la_0}$.\\ 
				Now consider $\de$ given by Proposition \ref{p4.3}, so that the weak comparison principle holds true in any subdomain $\tilde{\Om}$ if $|\tilde{\Om} |<\delta$. Fix a compact set $ K \subset \Om_{\la_0}$ so that $|\Om_{\la_0}\setminus K| \leq \frac{\de}{2}$. By compactness we can find $\mu >0$ such that 
				\begin{align*}
					z_{\la_0} -z \geq 2\mu >0~\text{in}~K.
				\end{align*}
				Take now $\tilde{\e}>0$ sufficiently small so that $\la_0+\tilde{\e} <\la^\ast$ and for any $0<\e \leq \tilde{\e}$ we have
				\begin{enumerate}
					\item [$(i)$] $z_{\la_0+\e}-z \geq 0~\text{in}~K,$
					\item [$(ii)$] $|\Om_{\la_0+\e} \setminus K| \leq \de$.
				\end{enumerate}
				In view of $(i)$ above we infer that, for any $0<\e \leq \tilde{\e}$, $z \leq z_{\la_0+{{\e}}}$ on the boundary of $\Om_{\la_0+\e}\setminus K$. Consequently by $(ii)$, we can apply Lemma \ref{p4.3} and deduce that
				\begin{align*}
					z \leq z_{\la_0+\e}~\text{in}~\Om_{\la_0+\e} \setminus K.
				\end{align*}
				Thus $z \leq z_{\la_0+\e}$ in $\Om_{\la_0+\e}$ and applying Lemma \ref{l4.4} we have $z < z_{\la_0+\e}$ in $\Om_{\la_0+\e}$. This is a contradiction to the definition of $\la_0$ and we conclude that $\la_0=\la^\ast$. This completes the proof.
			\end{proof}

   \subsection{Proof of the symmetry result}
	\textbf{Proof of Theorem \ref{t1.3}:}
    {We observe that by assumption, $\la^\ast =0$. Therefore, by applying Proposition \ref{pspu_0} and \ref{p4.5} in the $x_1$-direction, we get
	\begin{align*}
		u_0(x)+z(x) \leq (u_0)_{\la^\ast}(x)+z_{\la^\ast}(x),\quad \forall x\in \Om_0,
	\end{align*}
	and in the $-x_1$-direction to get 
		\begin{align*}
		u_0(x)+z(x) \geq (u_0)_{\la^\ast}(x)+z_{\la^\ast}(x),\quad \forall x\in \Om_0.
	\end{align*}
	Thus $u(x)=u_{\la^\ast}(x)$ in $\Om$ and the proof is complete.} \qed
\section{Jumping problem}
First, we establish the existence result for a variational inequality in the following subsection.
\subsection{Existence for a class of singular variational inequalities}
Throughout this subsection, unless otherwise mentioned, we assume that $w\in H^{-1}(\Om)$ and $h:\Om\times\mb R\to\mb R$ is a Carath\'eodory function that satisfies conditions $(h_1^\prime)$ and $(h_2)$, where $(h_1^\prime)$ is stated as follows: 
\begin{enumerate}
	\item [$(h_1^\prime)$] there exist two functions $\theta:\Om\to\mb R$ and $\kappa:\Om\to\mb R$ such that
	\begin{align*}
		|h(x,t)| \leq |\theta(x)|+|\kappa(x)||t|,~\text{for}~x \in \Om~\text{and every}~t \in \mb R,
	\end{align*}
	where $\theta \in L^\frac{2n}{n+2}(\Om)$ and $\kappa \in L^\frac{n}{2}(\Om)$.
\end{enumerate}
Also, we recall the first eigenfunction $e_1$ of $\mc M$ defined by the equation \eqref{mevp} and its associated eigenfunction $\la_1$.
                In this subsection, we establish the existence result for the variational inequality given as follows:
					\begin{align}\label{e5.14}
					\begin{cases}	 u >0~\text{in}~\Om~\text{and}~u^{-\gamma} \in L^1_{\text{loc}}(\Om),\\
						\I\Om \na u \na (v-u)dx+\iint_{\mb R^{2n}}\mc B(u)(x,y)\mc B(v-u)(x,y)d\nu\\
						~~~~~~~\geq \I\Om \left(u^{-\gamma}+h(x,u)\right)(v-u)dx -\la\I\Om e_1(v-u)dx +\ld w,(v-u)\rd,\\
						~~~~~~~~~~~~~~~~~~~~~~~~~~~~~~~~~ \forall v \in u+(H_0^1(\Om)\cap L_c^\infty(\Om))~\text{with}~v \geq 0~\text{in}~ \Om, \\
					u \leq 0~\text{on}~\partial \Om.
					\end{cases}
				\end{align}
                More precisely, we establish the following result.
                \begin{Theorem}\label{t5.4}
					Assume that $\al > \la_1$. Then there exists $\overline{\la} \in \mb R$ such that for every $\la > \overline{\la}$, problem \eqref{e5.14} admits at least two distinct weak solutions in $H^1_{\text{loc}}(\Om)\cap L^1(\Om)$. 
				\end{Theorem}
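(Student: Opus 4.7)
The plan is to apply the non-smooth mountain pass Theorem \ref{TMPT} to a suitable functional $\mc J = \mc F + \mc K$ on $V = H_0^1(\Om)$, recovering solutions of \eqref{e5.14} as critical points in the sense of Definition \ref{dcp}. Guided by the decomposition \eqref{deco} and Proposition \ref{p3.1}, I would parametrize $u = u_0 + z$ with $z \in H_0^1(\Om)$, where $u_0$ is the unique weak solution of $(\mc P_{\gamma,0})$. Then I would let $\mc F(z)$ carry the quadratic form evaluated at $u_0+z$ together with a convex primitive encoding the singular term $u^{-\gamma}$ and the obstacle indicator enforcing $u_0+z \geq 0$, and set
\begin{align*}
\mc K(z) = -\I\Om H(x, u_0+z)\,dx + \la \I\Om e_1(u_0+z)\,dx - \ld w, u_0+z\rd,
\end{align*}
where $H(x,t) = \int_0^t h(x,\tau)\,d\tau$. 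By $(h_1^\prime)$ and the Sobolev embedding $H^1_0(\Om) \hookrightarrow L^\frac{2n}{n-2}(\Om)$, $\mc K$ is well defined and of class $C^1$ on $V$, while $\mc F$ is convex, proper, and lower semicontinuous. Combining the variational characterizations of Theorems \ref{t1.1} and \ref{t3.2} shows that $u = u_0+z$ solves \eqref{e5.14} exactly when $z$ is a critical point of $\mc J$ in the sense of Definition \ref{dcp}.

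For the first solution I would produce a local minimum of $\mc J$. Using $(h_1^\prime)$ to bound $H(x,u_0+z)$ subcritically and choosing $r > 0$ sufficiently small, I would show that there exist $r > 0$ and $\sigma > \mc J(0)$ with $\mc J(z) \geq \sigma$ whenever $\|z\| = r$. For $\la > \overline{\la}$ sufficiently large, the linear contribution $+\la \int_\Om e_1(u_0+z)\,dx$ together with the sub/supersolution pair constructed from multiples of $e_1$ produces a point in the interior of the ball $\{\|z\|<r\}$ at which $\mc J$ lies strictly below $\sigma$; minimizing $\mc J$ over this ball then yields a first critical point $z^{(1)}$. For the second critical point I would invoke the jumping hypothesis $\al > \la_1$: by $(h_2)$ one has, for $t \to +\infty$,
\begin{align*}
\mc J(t e_1) \leq \frac{\la_1 - \al}{2}\,t^2 \I\Om e_1^2\,dx + o(t^2) \longrightarrow -\infty,
\end{align*}
so some $u_1$ with $\|u_1\| > r$ satisfies $\mc J(u_1) \leq \mc J(0)$, providing the required mountain pass geometry.

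The main obstacle will be verifying the non-smooth Palais-Smale condition for $\mc J$. Following the strategy of \cite{Caninojumping, Caninodcds}, I would first show boundedness of a PS sequence $\{z_k\}$: if $t_k := \|z_k\| \to \infty$, normalizing $\phi_k := z_k/t_k$ and passing to a weak limit $\phi \geq 0$, the asymptotic variational inequality satisfied by $\{z_k\}$ combined with $(h_2)$ and the identity $\mc M e_1 = \la_1 e_1$ from \eqref{mevp} would force $\phi$ to be a non-trivial non-negative eigenfunction of $\mc M$ associated to $\al > \la_1$, contradicting the simplicity and positivity characterization of $\la_1$. Once boundedness is established, strong convergence is recovered by testing the asymptotic inequality against admissible variations of the form $z_k - (z_k - z_j)^+$ and invoking the variational characterization of Theorem \ref{t1.1} to control the singular contribution. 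The distinctness of the two solutions is automatic since the mountain pass level is strictly above $\mc J(z^{(1)})$; the threshold $\overline{\la}$ is then determined quantitatively by the requirement that both the sub/supersolution construction producing $z^{(1)}$ and the strict separation from the ring barrier hold simultaneously.
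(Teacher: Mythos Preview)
Your overall architecture---shifting by $u_0$, splitting the functional into a convex lower semicontinuous piece plus a $C^1$ piece, and invoking Theorem \ref{TMPT}---matches the paper, and the Palais--Smale argument you outline is essentially that of Lemma \ref{t5.2}. The real gap is in the mountain-pass geometry.

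You propose to obtain the ring estimate $\mc J(z)\geq\sigma>\mc J(0)$ on a sphere $\|z\|=r$ with $r$ fixed independently of $\la$, by bounding the $H$-term subcritically. This cannot work here: the functional contains the linear term $\la\int_\Om e_1 z\,dx$, and for any fixed $r>0$ one can take $z=-\epsilon\phi$ with $\phi\in C_c^\infty(\Om)$, $\phi\geq 0$, normalized so that $\|z\|=r$. Since $u_0$ is bounded below on $\mathrm{supp}\,\phi$, the obstacle $u_0+z\geq 0$ is respected and $J_0(x,z)$ stays finite, while $\int_\Om e_1 z<0$; hence $\mc J(z)\to-\infty$ as $\la\to+\infty$. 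Thus no fixed sphere provides a separating barrier for all large $\la$, and with it your local-minimum construction collapses. The paper (Lemma \ref{t5.3}(a)) resolves this by letting the radius \emph{scale with} $\la$: one studies $\tilde\Psi_\la(u)=\Psi_\la(\la u)/\la^2$, identifies a limiting functional $\tilde\Psi_\infty$ which is strictly positive on a fixed small sphere precisely because the constraint $u\geq 0$ (forced by $J_0$) makes $\int_\Om e_1 u\,dx\geq 0$, and then concludes $\Psi_\la(u)\geq\mu\la^2$ on $\|u\|=\la r$ for $\la>\overline\la$. The local minimum follows simply from $\Psi_\la(0)=0<\mu\la^2$, so the minimizer over the closed ball is interior; no sub/supersolution pair is needed. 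A minor additional point: for the direction in which $\mc J\to-\infty$ you should use $v\in H_0^1(\Om)\cap L_c^\infty(\Om)$ approximating $e_1$ rather than $e_1$ itself, since controlling $J_0(x,tv)$ as $t\to\infty$ requires $u_0$ to be bounded away from zero on $\mathrm{supp}\,v$.
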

                \subsubsection{Preliminaries}
Recalling $u_0$ as in Proposition \ref{p3.1}, we define $J_0:\Om \times \mb R \ra [0,+\infty]$ by
		\begin{align}\label{edj_0}
			J_0(x,t) = P(u_0(x)+t)-P(u_0(x)) +tu_0(x)^{-\gamma},
		\end{align}
        where 
		\begin{align}\label{edP}
			P(t) =\begin{cases}
				-\I1^t s^{-\gamma}ds~&\text{if}~s \geq0,  \\
				+\infty~&\text{if}~ s<0.
			\end{cases}
		\end{align}
		Note that $J_0(x,0) =0$ and $J_0(x,\cdot)$ is convex and lower semicontinuous for any $x \in \Om$. Also $J_0(x,\cdot)$ is $C^1$ on $(-u_0(x),+\infty)$ with
		\begin{align}\label{e3.10}
			D_t J_0(x,t)= u_0(x)^{-\gamma}-(u_0(x)+t)^{-\gamma}.
		\end{align}
Moreover, let $K : H_0^1(\Om) \ra \mb R \cup \{+\infty\}$ be the convex functional defined by
	\begin{align}\label{eDK}
		K(u)=\frac{1}{2 }\I\Om|\na u|^2dx + \frac{1}{2}\iint_{\mb R^{2n}}\frac{|u(x)-u(y)|^2}{|x-y|^{n+2s}}dxdy +\I\Om J_0(x,u)dx.
	\end{align}
    Now for any $\la \in \mb R$, let $\Psi_\la:H_0^1(\Om) \ra (-\infty,+\infty] $ be the functional defined as 
\begin{equation}\label{psi}
\Psi_\la =K +
\hat{H}_\la,
\end{equation}
with
\begin{align*}
\hat{H}_\la(u) =-\I\Om H_1(x,u)dx +\la \I\Om e_1 udx -\ld w,u\rd,
\end{align*}
where
\begin{equation}\label{hh}
H_1(x,t)=\int_{0}^{t}h_1(x,s)\,ds,\quad h_1(x,t)=h(x,u_0(x)+t),\quad \text{ for }x\in\Om,\,t\in\mb R. 
\end{equation}
Furthermore, we define the functional $\Phi_w:L^2(\Om)\to (-\infty,+\infty]$ by
		\begin{equation}\label{e3.11}
        \begin{split}
			\Phi_w(u) =\begin{cases}
            \frac12\I\Om |\na (u-u_0)|^2dx + \frac12 \iint_{\mb R^{2n}} |\mc  B(u-u_0)(x,y)|^2d\nu\\
			+\I\Om J_0(x,u-u_0)dx -\ld w,u-u_0 \rd,~&\text{if}~u \in u_0 + H_0^1(\Om), \\
             +\infty,~& \text{ otherwise}.
            \end{cases}
            \end{split}
            \end{equation}
        We note that $\Phi_w$ is strictly convex, lower semiconitnuous and coercive and that $\Phi_w(u_0)=0$. Also note that the domain of the functional $\Phi_w$ is given by
		\begin{align*}
			\{u \in u_0 + H_0^1(\Om):J_0(x,u-u_0)\in L^1(\Om)\}.
		\end{align*}
We next prove a lemma which will be crucial to show that $\Psi_\la$ satisfies the (PS) condition. 
\begin{Lemma}\label{l5.1} 
	Let $\{u_k\}_{k \in \mb N}$ be a sequence in $H_0^1(\Om)$ and $\{\omega_k\}_{k \in \mb N}$ be a sequence in $H^{-1}(\Om)$. Suppose that $\{\omega_k\}_{k \in \mb N}$ is strongly convergent in $H^{-1}(\Om)$ and that
	\begin{align}\nonumber\label{e5.1}
		\frac{1}{2}&\I\Om |\na v|^2dx+\frac12 \iint_{\mb R^{2n}}|\mc B(v)(x,y)|^2d\nu +\I\Om J_0(x,v)dx\\
		& \geq \frac{1}{2}\I\Om |\na u_k|^2dx+\frac12 \iint_{\mb R^{2n}}|\mc B(u_k)(x,y)|^2d\nu +\I\Om J_0(x,u_k)dx+\ld \omega_k,v-u_k \rd,~\forall v \in H_0^1(\Om).
	\end{align}
	Then $\{u_k\}_{k \in \mb N}$ is strongly convergent in $H_0^1(\Om)$.
\end{Lemma}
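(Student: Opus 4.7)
The hypothesis says exactly that, for each $k$, $u_k$ is the (unique) minimizer over $H_0^1(\Om)$ of the strictly convex functional $F_k(v):=K(v)-\ld\omega_k,v\rd$, where $K(v):=\tfrac12\|v\|^2+\I\Om J_0(x,v)\,dx$. The plan is to (i) obtain a uniform bound on $\{\|u_k\|\}$, (ii) extract a weak subsequential limit $u$, and (iii) upgrade weak convergence to strong convergence via the standard convex-analysis trick of testing the defining inequality with $v=u$ and then combining weak lower semicontinuity with the Hilbert space identity that weak convergence plus norm convergence implies strong convergence.

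For step (i) I would plug $v=0$ into the hypothesis. Since $J_0(x,0)=0$ and, by \eqref{e3.10} together with the convexity of $J_0(x,\cdot)$, one has $J_0\geq 0$ with minimum at $t=0$, this yields $K(u_k)\leq\ld\omega_k,u_k\rd\leq\|\omega_k\|_{H^{-1}}\|u_k\|$; combined with $K(u_k)\geq\tfrac12\|u_k\|^2$ and boundedness of $\{\omega_k\}$ (since it converges strongly in $H^{-1}(\Om)$) this gives $\sup_k\|u_k\|<\infty$. For step (ii) I extract a subsequence (not relabeled) with $u_k\rightharpoonup u$ in $H_0^1(\Om)$ and $u_k\to u$ a.e. in $\Om$; then Fatou applied to $J_0\geq 0$ yields $\I\Om J_0(x,u)\,dx\leq\liminf_k\I\Om J_0(x,u_k)\,dx<\infty$, so $K(u)<\infty$ and $u$ is admissible as a test function.

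The crucial step (iii) proceeds as follows. Inserting $v=u$ into the hypothesis gives $K(u)\geq K(u_k)+\ld\omega_k,u-u_k\rd$. Since $\omega_k\to\omega$ strongly in $H^{-1}(\Om)$ while $u-u_k\rightharpoonup 0$ in $H_0^1(\Om)$, the last term tends to $0$, so $\limsup_k K(u_k)\leq K(u)$; combined with weak lower semicontinuity of $K$ this forces $K(u_k)\to K(u)$. Because $\tfrac12\|\cdot\|^2$ and $\I\Om J_0(x,\cdot)\,dx$ are each weakly l.s.c.\ and nonnegative and their sum along $\{u_k\}$ converges to the sum of their limits, each summand must converge separately; in particular $\|u_k\|\to\|u\|$. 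Weak convergence plus norm convergence in the Hilbert space $(H_0^1(\Om),\|\cdot\|)$ then yields $u_k\to u$ strongly. Passing to the liminf in the original inequality finally characterizes $u$ as the unique minimizer of $K-\ld\omega,\cdot\rd$ (by strict convexity of $K$), so the full sequence—not merely a subsequence—converges to $u$. The main technical point I expect to have to watch is the legitimacy of the test $v=u$, i.e.\ that $K(u)<\infty$; this is settled by the Fatou argument above, which crucially exploits $J_0\geq 0$.
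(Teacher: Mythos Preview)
Your proposal is correct and follows essentially the same route as the paper's proof: test with $v=0$ for boundedness, test with $v=u$ to force $\limsup_k K(u_k)\le K(u)$, use Fatou on the nonnegative $J_0$-term together with weak lower semicontinuity to obtain $\|u_k\|\to\|u\|$ and hence strong convergence of a subsequence, and finally pass to the limit in \eqref{e5.1} to identify $u$ as the unique minimizer of the strictly convex functional $K-\ld\omega,\cdot\rd$, which upgrades convergence to the full sequence. The only cosmetic difference is that the paper deduces $\limsup_k\|u_k\|^2\le\|u\|^2$ directly from Fatou, whereas you phrase it via the ``two weakly l.s.c.\ summands whose sum converges must each converge'' argument; these are equivalent.
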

\begin{proof}
	Taking $v =0$ in \eqref{e5.1}, we get
	\begin{align*}
		\frac{1}{2}\I\Om |\na u_k|^2dx+\frac12 \iint_{\mb R^{2n}}|\mc B(u_k)(x,y)|^2d\nu +\I\Om J_0(x,u_k)dx \leq \ld \omega_k,u_k\rd,
	\end{align*}
	which implies that $\{u_k\}_{k \in \mb N}$ is a bounded sequence in $H_0^1(\Om)$. This further implies that up to a subsequence $u_k \rightharpoonup u$ weakly in $H_0^1(\Om)$ with $J_0(x,u)\in L^1(\Om)$.\\
	If we put $v =u$ in \eqref{e5.1}, we obtain
	\begin{align*}
		\limsup_{k \ra \infty} &\left(\frac{1}{2}\I\Om |\na u_k|^2dx+\frac12 \iint_{\mb R^{2n}}|\mc B(u_k)(x,y)|^2d\nu +\I\Om J_0(x,u_k)dx\right)\\
		& \leq \frac{1}{2}\I\Om |\na u|^2dx+\frac12 \iint_{\mb R^{2n}}|\mc B(u)(x,y)|^2d\nu +\I\Om J_0(x,u)dx.
	\end{align*}
	Since $J_0(x,t) \geq 0$ using Fatou's lemma, we infer that
	\begin{align*}
		\limsup_{k \ra \infty} &\left(\frac{1}{2}\I\Om |\na u_k|^2dx+\frac12 \iint_{\mb R^{2n}}|\mc B(u_k)(x,y)|^2d\nu\right) \leq \frac{1}{2}\I\Om |\na u|^2dx+\frac12 \iint_{\mb R^{2n}}|\mc B(u)(x,y)|^2d\nu.
		\end{align*}
		This in combination with Remark \ref{rm2.4} and the lower semicontinuity of the norm implies that $u_k \ra u$ strongly in $H_0^1(\Om)$ up to a subsequence. Actually all the sequence $\{u_k\}_{k\in \mb N}$ converges to $u$ in $H_0^1(\Om)$. Indeed, assuming $\omega_k \ra \omega$ and passing to the limit in \eqref{e5.1}, we get
		\begin{align*}
			\frac{1}{2}&\I\Om |\na v|^2dx+\frac12 \iint_{\mb R^{2n}}|\mc B(v)(x,y)|^2d\nu +\I\Om J_0(x,v)dx\\
			& \geq \frac{1}{2}\I\Om |\na u|^2dx+\frac12 \iint_{\mb R^{2n}}|\mc B(u)(x,y)|^2d\nu +\I\Om J_0(x,u)dx+\ld \omega,v-u \rd,~\forall v \in H_0^1(\Om),
		\end{align*}
		which means that $u$ is the minimum of a strictly convex functional $K -\omega$. Since the minimum of $K-\omega$ is unique, we conclude that the whole sequence $\{u_k\}_{k \in \mb N}$ converges to $u$ in $H_0^1(\Om)$.
	\end{proof}
	\begin{Lemma}\label{t5.2}
		Assume $\al >\la_1$. Then, for every $\la \in \mb R$, the functional $\Psi_\la$ satisfies the (PS) condition.
	\end{Lemma}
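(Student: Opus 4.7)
The proof splits naturally into two parts: (i) show that every (PS) sequence $\{u_k\} \subset H_0^1(\Om)$ is bounded in $H_0^1(\Om)$, and (ii) upgrade weak convergence to strong convergence via Lemma \ref{l5.1}. Part (ii) is comparatively routine: once boundedness is known, extract a subsequence with $u_k \rightharpoonup u$ weakly in $H_0^1(\Om)$; then the subcritical growth in $(h_1)$ together with the Rellich--Kondrachov compact embedding forces $h_1(\cdot, u_k) \ra h_1(\cdot, u)$ strongly in $L^2(\Om)$, so that $\hat{H}_\la'(u_k) = -h_1(\cdot, u_k) + \la e_1 - w$ converges strongly in $H^{-1}(\Om)$. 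The PS inequality rewrites as
\begin{equation*}
K(v) \geq K(u_k) + \ld \tilde{\omega}_k,\, v - u_k \rd, \qquad \tilde{\omega}_k := -\hat{H}_\la'(u_k) + \omega_k,
\end{equation*}
with $\tilde{\omega}_k$ strongly convergent in $H^{-1}(\Om)$, and Lemma \ref{l5.1} then directly yields $u_k \ra u$ strongly in $H_0^1(\Om)$.

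The core difficulty is part (i), for which I argue by contradiction. Assume $\|u_k\| \ra +\infty$ and set $v_k := u_k/\|u_k\|$. Two preliminary observations simplify matters. First, finiteness of $K(u_k)$ forces $u_0 + u_k \geq 0$ a.e. in $\Om$, whence $\|u_k^-\|_{L^\infty(\Om)} \leq \|u_0\|_{L^\infty(\Om)}$ uniformly, so the blow-up is driven entirely by $u_k^+$. Second, up to a subsequence, $v_k \rightharpoonup v$ weakly in $H_0^1(\Om)$ with $v_k \ra v$ strongly in $L^p(\Om)$ for $p < 2^*$ and pointwise a.e.; the uniform bound on $u_k^-$ yields $v \geq 0$. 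To extract a limit equation for $v$, I test the PS inequality with admissible directions $v = u_k \pm t\varphi$, $\varphi \in H_0^1(\Om) \cap L_c^\infty(\Om)$, and let $t \ra 0^+$, using the differentiability of $J_0(x, \cdot)$ on $(-u_0(x), +\infty)$ given by \eqref{e3.10}. This yields, distributionally in $\Om$,
\begin{equation*}
\mc M(u_0 + u_k) = (u_0 + u_k)^{-\gamma} + h(x, u_0 + u_k) - \la e_1 + w + \omega_k.
\end{equation*}

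Dividing by $\|u_k\|$ and passing to the limit (via $(h_1)$--$(h_2)$, dominated convergence, and the positive local lower bound on $u_0$ from Proposition \ref{p3.1} to handle the normalized singular term on compact subsets of $\Om$), the weak limit $v$ satisfies $\mc M v = \al v$ in $\Om$, $v \geq 0$, $v = 0$ in $\mb R^n \setminus \Om$. If $v \not\equiv 0$, then $\al$ is an eigenvalue of $\mc M$ with a nonnegative eigenfunction, forcing $\al = \la_1$, which contradicts $\al > \la_1$. To rule out $v \equiv 0$, I would apply the reasoning behind Lemma \ref{l5.1} to the rescaled inequality in order to promote $v_k \rightharpoonup v$ to strong convergence in $H_0^1(\Om)$, giving $\|v\| = 1$ and hence $v \not\equiv 0$. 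The principal obstacle is precisely this passage to the limit in the rescaled equation: the normalized singular term is not a priori controlled on the set $\{v = 0\}$, and the jumping asymptotics $(h_2)$ only describe the regime $u_0 + u_k \ra +\infty$; coordinating these regimes with the delicate convergence of the quadratic and nonlocal pieces of $K$ is where the variational characterization of Theorem \ref{t1.2} and the pointwise bounds of Proposition \ref{p3.1} will enter decisively.
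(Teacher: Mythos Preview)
Your Part (ii) is fine and matches the paper. In Part (i), the overall contradiction strategy is right, but two points need repair.

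First, deriving a full distributional \emph{equation} by testing with $u_k \pm t\varphi$ is not justified: for the minus direction you need $u_0 + u_k - t\varphi \geq 0$ a.e.\ for some $t > 0$, and you have no positive essential lower bound on $u_0 + u_k$ over $\supp\,\varphi$ (only $u_0 + u_k > 0$ a.e.). The paper instead invokes Theorem~\ref{t3.2} to obtain only the one-sided variational inequality \eqref{e5.3} and works exclusively with test directions $v \geq -(u_0+u_k)$. In the rescaled limit this yields merely
\[
\I\Om \na \hat z\,\na v\,dx + \iint_{\mb R^{2n}}\mc B(\hat z)(x,y)\mc B(v)(x,y)\,d\nu \geq \al\I\Om \hat z\,v\,dx
\]
for nonnegative $v$, but testing this against $v=e_1$ already gives $\la_1 \geq \al$, which is the desired contradiction once $\hat z \not\equiv 0$.

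The real gap is ruling out $\hat z \equiv 0$. Your plan to apply a rescaled Lemma~\ref{l5.1} runs into exactly the obstacle you flag: $K$ is not homogeneous (the $J_0$ piece does not scale), so the normalized singular term resists control. The paper bypasses this entirely with a single well-chosen test function: put $v = -u_k$ in \eqref{e5.3} (admissible since $-u_k \geq -u_0-u_k$, after approximation in $H_0^1\cap L_c^\infty$). The monotonicity of $t \mapsto t^{-\gamma}$ makes $\big(u_0^{-\gamma} - (u_0+u_k)^{-\gamma}\big)u_k \geq 0$ pointwise, so the singular contribution can be dropped with the correct sign, yielding
\[
\|u_k\|^2 \leq \I\Om \big(h_1(x,u_k) - \la e_1\big)u_k\,dx + \ld w+\omega_k,\,u_k\rd.
\]
Dividing by $\|u_k\|^2$ and passing to the limit via \eqref{e5.8}--\eqref{e5.9} gives $1 \leq \al \int_\Om \hat z^2\,dx$, so $\hat z \not\equiv 0$ immediately---no rescaled compactness argument is needed.
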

	\begin{proof}
		Let $\{u_k\}_{k\in \mb N}$ be a sequence in $H_0^1(\Om)$ and $\{\omega_k\}_{k\in \mb N}$ a sequence in $H^{-1}(\Om)$ with $$\sup_k|\Psi_\la(u_k)|<+\infty,\quad \omega_k \ra 0$$ and
        {$$
        K(v)\geq K(u_k)-\ld \hat{H}_\la^{'}(u_k),v-u_k\rd+\ld \omega_k,v-u_k\rd,\quad \forall v\in H_0^{1}(\Om),
        $$}
        that is
		\begin{align*}
				\frac{1}{2}&\I\Om |\na v|^2dx+\frac12 \iint_{\mb R^{2n}}|\mc B(v)(x,y)|^2d\nu +\I\Om J_0(x,v)dx\\ 
			 \geq& \frac{1}{2}\I\Om |\na u_k|^2dx+\frac12 \iint_{\mb R^{2n}}|\mc B(u_k)(x,y)|^2d\nu +\I\Om J_0(x,u_k)dx+ \I\Om(h_1(x,u_k)-\la e_1)(v-u_k)dx\\
			& +\ld w+\omega_k,v-u_k\rd,~\forall v \in H^1_0(\Om).
		\end{align*}
		This implies that for each $k$, {$u_k+u_0$ is the minimum of the energy functional $\Phi_{w+\omega_k}$ defined in \eqref{e3.11}}. Since $h(x,{u_k}+u_0)-\la e_1 +w+\omega_k \in H^{-1}(\Om)$ using Theorem \ref{t3.2}, we conclude that
		\begin{align}\label{e5.3}
			\begin{cases}	u_0 + u_k >0~\text{in}~\Om~\text{and}~(u_0 + u_k)^{-\gamma} \in L^1_{\text{loc}}(\Om),\\
				\I\Om \na u_k \na vdx+\iint_{\mb R^{2n}}\mc B(u_k)(x,y)\mc B(v)(x,y)d\nu\\
				~~~~~~~\geq \I\Om \left((u_0+u_k)^{-\gamma}-u_0^{-\gamma}\right)vdx +\I\Om(h_1(x,u_k)-\lambda\,e_1)vdx +\ld w+\omega_k,v\rd\\
				~~~~~~~~~~~~~~~~~~~~~~~~~~~~~~~~~ \forall v \in (H_0^1(\Om) {\cap} L_c^\infty(\Om))~\text{with}~v \geq -u_0-u_k~\text{in}~ \Om, \\
				u_0+u_k \leq 0~\text{on}~\partial \Om.
			\end{cases}
		\end{align}
		We first claim that $\{u_k\}_{k \in \mb N}$ is bounded in $H_0^1(\Om)$. Suppose on the contrary that
		\begin{align*}
		r_k:=\|u_k\| \ra +\infty
		\end{align*} 
		and let $z_k =u_k/r_k$. Then, up to a subsequence, $z_k \rightharpoonup \hat z$ weakly in $H_0^1(\Om)$ with $\hat z \geq 0$ in $\Om$.\\
		Using standard approximation argument, we can choose $v =-u_k$ in \eqref{e5.3} and get
		\begin{align*}
			\I\Om|\na u_k|^2dx +&\iint_{\mb R^{2n}}|\mc B(u_k)(x,y)|^2d\nu \\
			\leq &	\I\Om|\na u_k|^2dx + \iint_{\mb R^{2n}}|\mc B(u_k)(x,y)|^2d\nu+\I{\{u_k \geq 0\} } \left((u_0+u_k)^{-\gamma}-u_0^{-\gamma}\right)u_kdx\\ 
			\leq& \I\Om(h_1(x,u_k)-\lambda\,e_1)u_kdx +\ld w+\omega_k,u_k\rd,
		\end{align*}
		which implies
		\begin{align}\label{e5.4}\nonumber
			1=&
            \I\Om|\na z_k|^2dx +   \iint_{\mb R^{2n}}|\mc B(z_k)(x,y)|^2d\nu \\
			\leq& \I\Om \frac{h_1(z,r_kz_k)}{r_k}z_k-\frac{\la}{r_k}\I\Om e_1 z_k dx +\frac{1}{r_k}\ld w+\omega_k,z_k\rd.
		\end{align}
		We claim that $\hat z\not \equiv 0$ in $\Om$. To this end, letting $k \ra \infty$ in \eqref{e5.4}, we prove that
		\begin{align}\label{e5.5}
			1 \leq  \al \I \Om \hat z^2 dx.
		\end{align}
  Indeed letting $k \ra \infty$, we have
		\begin{align}\label{e5.6}
			\left|\frac{\la}{r_k}\I\Om e_1 z_k dx \right| \leq \frac{|\la|}{r_k}\|e_1\|_{L^2(\Om)}\|z_k\|_{L^2(\Om)}\leq C\frac{|\la|}{r_k}\|e_1\|_{L^2(\Om)}\|z_k\| \ra 0
		\end{align}
		and 
		\begin{align}\label{e5.7}
			\left|\frac{1}{r_k}\ld w +\omega_k, z_k\rd\right| \leq \frac{1}{r_k}\|w+z_k\|_{H^{-1}(\Om)} \ra 0.
		\end{align}
		Also by \cite[Lemma 3.3]{C}, we have that
		\begin{align}\label{e5.8}
		\lim_{k \ra \infty}	\frac{h_1(x,r_kz_k)}{r_k}=\alpha \hat z~\text{strongly in}~H^{-1}(\Om)
		\end{align}
		and so as $k \ra \infty$
		\begin{align}\label{e5.9}
			\I\Om \frac{h_1(x,r_kz_k)}{r_k} z_kdx \ra \I\Om \al \hat z^2dx.
		\end{align}
		Combining \eqref{e5.6}, \eqref{e5.7} and \eqref{e5.9} in \eqref{e5.4} we conclude that the claim \eqref{e5.5} holds. Now if we choose $v \in C_c^\infty(\Om)$ with $v \geq 0$ in $\Om$ in \eqref{e5.3} and divide by $r_k$, then using the definition of $z_k$, we get
		\begin{align*}
			&\I\Om \na z_k \na vdx+\iint_{\mb R^{2n}}\mc B(z_k)(x,y)\mc B(v)(x,y)d\nu\\
			&\geq \frac{1}{r_k}\I{\{u_k \geq 0\}} \left((u_0+u_k)^{-\gamma}-u_0^{-\gamma}\right)vdx +\frac{1}{r_k}\I\Om(h_1(x,r_kz_k)-\lambda\,e_1)vdx\\
            &\qquad+\frac{1}{r_k}\ld w+\omega_k,v\rd.
		\end{align*}
		Since $u_0 \geq C>0$ on the support of $v$, we can pass to the limit as $k \ra \infty$ and again using \eqref{e5.9}, we obtain
		\begin{align*}
			\I\Om \na \hat z \na vdx+&\iint_{\mb R^{2n}}\mc B(\hat z)(x,y)\mc B(v)(x,y)d\nu \geq \al\I\Om \hat z v dx,~\text{for every}~v \in C_c^\infty(\Om)~\text{with}~v \geq 0.
		\end{align*}
		 By density arguments, we put $v =e_1$ in above equation and using the fact that $\hat z \not \equiv 0$, we get $\la_1 \geq \al$, which is a contradiction to the our assumption that $\al >\la_1$. Thus $\{u_k\}_{k \in \mb N}$ is bounded in $H_0^1(\Om)$ and so up to a subsequence $u_k \rightharpoonup u$ weakly in $H_0^1(\Om)$. Then using the arguments in \cite[Theorems 4.36 and 4.37]{Groli}, by $(h_1^\prime)$, up to a subsequence $\{h_1(x,u_k)\}_{k\in \mb N}$ is strongly convergent to $h_1(x,u)$ in $H^{-1}(\Om)$. Now the assertion follows using Lemma \ref{l5.1}.
		\end{proof}
		The following theorem demonstrates that the functional $\Psi_\la$ indeed possesses the Mountain Pass geometry, as outlined in Theorem \ref{TMPT}.
		\begin{Lemma}\label{t5.3}
			Assume that $\al >\la_1$. Then the following facts hold:
			\begin{enumerate}
				\item [$(a)$] there exists $r, \overline{\la}, \mu >0$ such that $\Psi_\la(u) \geq \mu \la^2$ for every $\la > \overline{\la}$ and every $u \in H_0^1(\Om)$ with $\|u\|=\la r$.
				\item [$(b)$] there exists $v \in H_0^1(\Om)\cap L^\infty_c(\Om)$ such that $v \geq 0$ in $\Om$ and 
				\begin{align*}
					\lim_{t \ra +\infty} \Psi_\la(tv) =-\infty,\quad \forall~\la\in\mb R.
				\end{align*}
			\end{enumerate}
			\end{Lemma}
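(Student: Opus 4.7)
My plan addresses the two items separately, using the asymptotic condition $(h_2)$ to produce a direction of descent for $(b)$, and Sobolev-Poincar\'e estimates on a $\la$-scaled sphere for $(a)$.

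For $(b)$, I would first construct a nonnegative test function in $H_0^1(\Om)\cap L_c^\infty(\Om)$ whose Rayleigh quotient is strictly less than $\alpha$. Concretely, set $v:=e_1\cdot\eta$ where $\eta\in C_c^\infty(\Om)$ is a smooth cutoff with $\eta\equiv 1$ on $\{x\in\Om:\mathrm{dist}(x,\pa\Om)>\e\}$; since $e_1\in L^\infty(\Om)$ and $e_1>0$ in $\Om$, and since (by the $C^{1,\ba}$ boundary regularity) $e_1(x)\lesssim\mathrm{dist}(x,\pa\Om)$ near $\pa\Om$, one verifies $\|v\|\to\|e_1\|$ and $\|v\|_{L^2}\to\|e_1\|_{L^2}$ as $\e\to 0$. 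This implies $\|v\|^2/\|v\|_{L^2}^2\to\la_1<\alpha$, so fixing $\e$ small enough gives $\|v\|^2<\alpha\|v\|_{L^2}^2$. Expanding
\begin{equation*}
\Psi_\la(tv)=\tfrac{t^2}{2}\|v\|^2+\I\Om J_0(x,tv)\,dx-\I\Om H_1(x,tv)\,dx+\la t\I\Om e_1 v\,dx-t\ld w,v\rd,
\end{equation*}
and letting $t\to+\infty$: by $(h_2)$ and $u_0\in L^\infty(\Om)$, $h_1(x,tv)/(tv)\to\alpha$ a.e.\ on $\{v>0\}$, and the dominated convergence theorem (dominated via $(h_1)$) yields $t^{-2}\int H_1(x,tv)\,dx\to\tfrac{\alpha}{2}\int v^2\,dx$; the singular term $\int J_0(x,tv)\,dx$ is $O(t)$ by the explicit form of $P$ (at most sublinear or logarithmic growth) combined with $u_0^{-\gamma}\in L^\infty(\supp v)$; and the remaining terms $\la t\int e_1 v\,dx-t\ld w,v\rd$ are $O(t)$. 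Hence $t^{-2}\Psi_\la(tv)\to\tfrac12(\|v\|^2-\alpha\|v\|_{L^2}^2)<0$, and $\Psi_\la(tv)\to-\infty$ for every $\la\in\R$.

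For $(a)$, I would fix $r>0$ (large, independent of $\la$) and work on the sphere $\|u\|=\la r$. Using $J_0\geq 0$ (from the convexity of $P$) and estimating the remaining terms via $(h_1^\prime)$, H\"older with $\theta\in L^{2n/(n+2)}$ and $\kappa\in L^{n/2}$, the Sobolev embedding $H_0^1\hookrightarrow L^{2^*}$, and Cauchy-Schwarz, one obtains constants $C_1,C_2,C_3>0$ such that
\begin{equation*}
\Psi_\la(u)\geq(\tfrac12-C_2)\|u\|^2-(C_1+C_3\la+\|w\|_{H^{-1}(\Om)})\|u\|.
\end{equation*}
Substituting $\|u\|=\la r$ gives $\Psi_\la(u)\geq\la^2 r[(\tfrac12-C_2)r-C_3]-\la r(C_1+\|w\|_{H^{-1}(\Om)})$; choosing $r$ large enough that $(\tfrac12-C_2)r-C_3>0$, and then $\overline{\la}$ large enough that the $\la^2$ part dominates the linear-in-$\la$ residual, yields $\Psi_\la(u)\geq\mu\la^2$ for some $\mu>0$ and all $\la>\overline{\la}$. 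The main obstacle is securing $\tfrac12-C_2>0$: if a direct use of $(h_1^\prime)$ yields $C_2$ too large, the plan is to refine the bound on $\int|H_1(x,u)|\,dx$ by splitting according to the size of $u$, applying $(h_2)$ on $\{u\geq R\}$ to replace $\|\kappa\|_{n/2}$ by the asymptotic constant $\alpha$, and then absorbing the quadratic part via Poincar\'e with the sharp first-eigenvalue constant; the freedom in choosing the threshold $\overline{\la}$ handles any residual linear-in-$\la$ contribution.
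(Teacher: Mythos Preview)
Your treatment of $(b)$ is correct and essentially matches the paper's argument.

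For $(a)$, however, there is a genuine gap. The heart of the difficulty is that, since $\alpha>\la_1$, the limiting quadratic form $u\mapsto\tfrac12\|u\|^2-\tfrac{\alpha}{2}\int_\Om u^2\,dx$ is \emph{indefinite}: it is strictly negative along multiples of $e_1$. Consequently your coefficient $\tfrac12-C_2$ cannot be made positive by any estimate based only on $(h_1')$ and Sobolev--Poincar\'e. Your proposed refinement---replacing $\|\kappa\|_{L^{n/2}}$ by $\alpha$ via $(h_2)$ and then ``absorbing the quadratic part via Poincar\'e with the sharp first-eigenvalue constant''---leads precisely to $\tfrac12\|u\|^2-\tfrac{\alpha}{2}\int_\Om u^2\geq\tfrac12(1-\alpha/\la_1)\|u\|^2$, and $1-\alpha/\la_1<0$. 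So the $\la^2$ coefficient you obtain on the sphere $\|u\|=\la r$ is negative for every $r>0$; choosing $r$ large only makes it worse, and no choice of $\overline{\la}$ can repair a wrong-signed leading term. In fact, testing with $u=t e_1$ (with $t=\la r/\|e_1\|$) shows directly that the desired bound $\Psi_\la(u)\geq\mu\la^2$ forces $r$ to be \emph{small}, not large.

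What is missing is the \emph{sign} information in the linear term $+\la\int_\Om e_1 u\,dx$, which you discard via $|\la\int e_1 u|\leq C_3\la\|u\|$. The finiteness constraint $\Psi_\la(u)<+\infty$ forces $u\geq -u_0$, so after rescaling $u\mapsto u/\la$ one obtains, in the limit $\la\to\infty$, the constraint $u\geq 0$ and hence $\int e_1 u\geq 0$. The paper exploits this as follows: set $\tilde\Psi_\la(u)=\Psi_\la(\la u)/\la^2$ and consider its formal limit $\tilde\Psi_\infty(u)=\tfrac12\|u\|^2-\tfrac{\alpha}{2}\int u^2+\int e_1 u$ on the positive cone. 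On the ``bad'' cone $\mathcal H_\infty=\{u\geq 0:\tfrac12\|u\|^2-\tfrac{\alpha}{2}\int u^2\leq\tfrac14\|u\|^2\}$ one shows by a short compactness argument (using $e_1>0$) that $\inf\{\int e_1 v:v\in\mathcal H_\infty,\ \|v\|=1\}=c'>0$; combined with the trivial bound outside $\mathcal H_\infty$, this yields $\tilde\Psi_\infty(u)>0$ for $0<\|u\|\leq r$ with $r$ small. A contradiction argument (weak limits together with $H_1(x,\la_k u_k)/\la_k^2\to\tfrac{\alpha}{2}u^2$ in $L^1$) then transfers this positivity to $\tilde\Psi_{\la}$ for all large $\la$. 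Thus the term $\la\int e_1 u$ is the \emph{source} of the $\mu\la^2$ lower bound, not a residual to be absorbed.
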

			\begin{proof}
				$(a)$ For every $\la >0$, let $\tilde{\Psi}_\la =\Psi_\la(\la u)/\la^2$ and define $\tilde{\Psi}_\infty:H_0^1(\Om) \ra (-\infty,+\infty]$ by
				\begin{align*}
					\tilde{\Psi}_\infty(u)=\begin{cases}
						\frac12 \I\Om |\na u|^2 dx+\frac{1}{2}[u]_{s,\mb R^n}^2-\frac{\al}{2}\I\Om u^2 dx+\I\Om e_1 udx~&\text{if}~u \geq 0~\text{in}~\Om,\\
						+\infty~&\text{otherwise}.
					\end{cases}
				\end{align*}
		We claim that there exists $r>0$ such that
		 \begin{align}\label{e5.10}
		 	\tilde{\Psi}_\infty(u) >0~\text{for every}~u \in H_0^1(\Om)~\text{with}~0<\|u\| \leq r.
		 	\end{align}	
		 	For this, let $ \mc H_+ =\{u \in H_0^1(\Om): u\geq 0~\text{in}~\Om\}$ and set
		 	\begin{align*}
		 	 \mc H_\infty=\left\{u \in \mc H_+:\frac12 \I\Om |\na u|^2 dx+\frac{1}{2}[u]_{s,\mb R^n}^ 2-\frac{\al}{2}\I\Om u^2 dx \leq 	\frac14 \I\Om |\na u|^2 dx+\frac{1}{4}[u]_{s,\mb R^n}^ 2 \right\}.
		 	\end{align*}
		 	In $\mc H_+\setminus \mc H_\infty$ the above claim trivially holds. On the other hand we claim that  
		 	\begin{align}\label{e5.11}
		 		c^{\prime}:=\inf\left\{\I\Om e_1 vdx: v \in \mc H_\infty, \|v\| =1\right\}>0.
		 	\end{align}
		 	Indeed otherwise, there exists a sequence $\{v_k\}_{k \in \mb N}$ in $\mc H_\infty$, $\|v_k\|=1$ and $\I\Om e_1v_kdx \ra 0$ as $k \ra +\infty$. Then up to a subsequence $v_k \rightharpoonup v$ weakly in $H_0^1(\Om)$, $v_k \ra v$ strongly in $L^2(\Om)$ and $v_k \ra v$ pointwise in $\Om$, implying that $v \geq 0$ in $\Om$. Now since $v_k \in \mc H_\infty$, we have
		 	\begin{align*}
		 		\I{\Om}v_k^2 dx \geq \frac{1}{2\al}.
		 	\end{align*}
		 	Using this fact and the strong convergence of $v_k$ in $L^2(\Om)$ we infer that $v \ne 0$ in $\Om$. On the other hand using the weak convergence we have $\I\Om v e_1 dx =0$. Since $ve_1 \geq 0$ in $\Om$ and $e_1>0$ in $\Om$, we have $v=0$ in $\Om$, a contradiction. Hence our claim \eqref{e5.11} is true. From here it is trivial to show that \eqref{e5.10} holds.\\
		 	Now by contradiction, suppose there exist a sequence $\{u_k\}_{k \in \mb N}$ in $H_0^1(\Om)$ and a sequence $ \la_k \ra +\infty$ with $\|u_k\| =r$ and
		 	\begin{align*}
		 		0 \geq& \limsup_{k} \tilde{\Psi}_{\la_k}(u_k)\\
		 		   =& \limsup_{k}  \left(\frac{1}{2 }\I\Om|\na u_k|^2dx + \frac{1}{2}\iint_{\mb R^{2n}} |\mc B(u_k)(x,y)|^2d\nu +\frac{1}{\la_k^2}\I\Om J_0(x,u)dx\right.\\
		 		   	&\left.-\I\Om \frac{H_1(x,\la_ku_k)}{\la_k^2}dx+\I\Om e_1u_kdx -\frac{1}{\la_k}\ld w,u_k\rd\right)\\
		 		   	\geq & \limsup_{k}  \left(\frac{1}{2 }\I\Om|\na u_k|^2dx+ \frac{1}{2}\iint_{\mb R^{2n}} |\mc B(u_k)(x,y)|^2d\nu-\I\Om \frac{H_1(x,\la_ku_k)}{\la_k^2}dx\right.\\
		 		   	 &+\left.\I\Om e_1u_kdx-\frac{1}{\la_k}\ld w,u_k\rd \right).
		 	\end{align*}
		 	Since $\|u_k\|=r$, up to a subsequence, $u_k \rightharpoonup u$ weakly in $H_0^1(\Om)$ with $\|u\| \leq r$. Since by \cite[Lemma 3.3]{C}, we have
		 	\begin{align}\label{eLJ_1}
		 		\lim_k \frac{H_1(x,\la_k u_k)}{\la_k^2} =\frac{\al}{2} u^2~\text{strongly in}~L^1(\Om),
		 		\end{align}
		 		we deduce that $u \ne 0$ and
		 		\begin{align}\label{e5.12}
		 			\frac{1}{2}\I\Om|\na u|^2dx + \frac{1}{2}\iint_{\mb R^{2n}} |\mc B(u)(x,y)|^2d\nu-\frac{\al}{2}\I\Om u^2dx+\I\Om e_1udx \leq 0.
		 		\end{align}
		 	On the other hand, since $\tilde{\Psi}_{\la_k}(u_k)<+\infty$, from the definition of $J_0$ it follows that $\la_ku_k >-u_0$ in $\Om$. Therefore $u\geq 0$ in $\Om$ and \eqref{e5.12} is equivalent to $\tilde{\Psi}_\infty \leq 0$, which is a contradiction to \eqref{e5.10}.\\
		 	$(b)$ Let $v \in H_0^1(\Om)\cap L^\infty_c(\Om)$ with $v \geq 0$. Then
		 	\begin{align*}
		 		\Psi_\la(tv)=&t^2\left(\frac{1}{2}\I\Om |\na v|^2dx +\frac12\iint_{\mb R^{2n}}|\mc B(v)(x,y)|^2d\nu+\frac{1}{t^2}\I\Om J_0(x,tv)dx-\frac{1}{t^2}\I\Om H_1(x,tv)dx\right.\\
		 		&\left.+\frac{\la}{t}\I\Om e_1 vdx- \frac{1}{t}\ld w,v\rd\right).
		 	\end{align*}
		 	Since $\I\Om |\na e_1|^2dx + \iint_{\mb R^{2n}}|\mc B(e_1)(x,y)|^2 d\nu \leq \al \I\Om e_1^2 dx$, by an approximation argument, we can take $v \in H_0^1(\Om) \cap L^\infty_c(\Om)$ with $v \geq 0$ such that
		 	\begin{align*}
		 		\I\Om |\na v|^2dx + \iint_{\mb R^{2n}}|\mc B(v)(x,y)|^2 d\nu \leq \al \I\Om v^2 dx.
		 		\end{align*}
		 	Choose $v$ as above and take into account the fact that $u_0 \geq C_1>0$ in the support of $v$ and arguing as in \eqref{eLJ_1}, we get
		 	\begin{align*}
		 		\lim_{t \ra +\infty} \Psi_\la(tv)\sim \lim_{t \ra +\infty}t^2\left(	\I\Om |\na v|^2dx + \iint_{\mb R^{2n}}|\mc B(v)(x,y)|^2 d\nu - \al \I\Om v^2 dx\right)=-\infty.
		 	\end{align*}
		 	This concludes the proof.
				\end{proof}
\subsubsection{Proof of existence result for the singular variational inequality}
				\textbf{Proof of Theorem \ref{t5.4}:}
					Let $\overline{\la}, r >0$ be as in assertion $(a)$ of Lemma \ref{t5.3} and take $\la >\overline{\la}$. Since $\Psi_\la(0)=0$, using Lemmas \ref{t5.2} and \ref{t5.3} it follows that $\Psi_\la$ satisfies the assumptions of Theorem \ref{TMPT}. Then Theorem \ref{TMPT} gives a critical point for $\Psi_\la$, say $u_1$ with $\Psi_\la(u_1)>0$.\\
					On the other hand, $\Psi_\la$ is weakly lower semicontinuous. Therefore {$\Psi_\la$} admits a minimum $u_2$ on the closed convex set $\{u \in H_0^1(\Om):\|u\| \leq r\}$ with $\Psi_\la(u_2) \leq 0$. Since $\|u_2\| \leq r$, $u_2$ is a local minimum of $\Psi_\la$ and hence another critical point of $\Psi_\la$.\\
                    Finally using Proposition \ref{p3.1} and Theorem \ref{t3.2}, we conclude that $u_0+u_1$ and $u_0 +u_2$ are two distinct solutions of \eqref{e5.14} in $H^1_{\text{loc}}(\Om)\cap L^1(\Om)$. \qed
\subsection{Proof of the main results} In this subsection we complete the proofs of Theorems \ref{t1.4} and \ref{t1.5}.\\
\textbf{Proof of Theorem \ref{t1.4}.} Let $w$ be of the form $(\mc H_2)$. Since $(h_1)$ implies $(h_1^\prime)$, we can apply Theorem \ref{t5.4}, obtaining two distinct solutions {$u_0+u_1, u_0+u_2 \in H^1_{\text{loc}}(\Om)\cap L^1(\Om)$} of \eqref{e5.14}. Now we need to pass from the variational inequality {\eqref{e5.14} to the equation $(\mc P_{\gamma,w})$}. In view of Theorem \ref{t1.1}, we only need $h(x,{u_0}+u_i)-\la e_1 \in L^1_{\text{loc}}(\Om)$ for $i=1,2$ which is in fact the case because of the assumption $(h_1)$ and the fact that $u_i \in H^1_{\text{loc}}(\Om)$.\\
Next we discuss the regularity of the solutions. For this let $u\in H^1_{\text{loc}}(\Om)\cap L^1(\Om)$ be any weak solution of $(\mc P_{\gamma,w})$}. Our first step is to show that $u \in L^\infty(\Om)$. To this end, we will use the following inequality (see e.g., \cite[page 879]{BCSS} for the fractional Laplacian 
\begin{align}\label{efli}
	(-\De)^s \psi(u) \leq \psi^\prime(u)(-\De)^s u,
\end{align}
where $\psi$ is a convex piecewise $C^1$ with bounded derivative function. Now let $\Upsilon:\mb R \ra \mb [0,1]$ be a $C^\infty(\mb R)$ convex increasing function such that $\Upsilon^\prime(t) \leq 1$ for all $t \in [0,1]$ and $\Upsilon^\prime(t)=1$ when $t \geq 1$. Define $\Upsilon_\epsilon(t)= \e \Upsilon(\frac{t}{\e})$. Then using the fact that $\Upsilon_\e$ is smooth, we obtain $\Upsilon_\e \ra (t-1)^+$ uniformly as $\e \ra 0$. This fact along with \eqref{efli} implies for any $\varphi \in C_c^\infty(\Om)$ with $\varphi \geq 0$ that
\begin{align*}
	\I\Om&\na \Upsilon_\e(u)\na \varphi dx +\iint_{\mb R^{2n}}\mc B(\Upsilon_\e(u))(x,y)\mc B(\varphi)(x,y)d\nu \\
	\leq& \Upsilon_\e^\prime(u)\left(	\I\Om\na u\na \varphi dx +\iint_{\mb R^{2n}}\mc B(u)(x,y)\mc B(\varphi)(x,y)d\nu\right)\\
	=&  \Upsilon_\e^\prime(u)\left( \I\Om u^{-\gamma}\varphi dx + \I\Om (h(x,u)-\la e_1)\varphi dx\right)	\\
    \leq & \chi_{\{u>1\}}\left( \I\Om u^{-\gamma}\varphi dx + \I\Om (|h(x,u)|+\la e_1)\varphi dx\right).
\end{align*}
Hence, as $\e \ra 0$ using $(h_1)$ we deduce that
\begin{align*}
	&\I\Om\na(u-1)^+\na \varphi dx +\iint_{\mb R^{2n}}\mc B((u-1)^+)(x,y)\mc B(\varphi)(x,y)d\nu\\
	&\leq \chi_{\{u>1\}}\left( \I\Om u^{-\gamma}\varphi dx + \I\Om (|h(x,u)|+\la e_1)\varphi dx\right)\leq C\int_{\Omega}(1+|(u-1)^+|^{q-1})\varphi\,dx,
\end{align*}
for any $q \in [2,2^\ast]$. Now using \cite[Theorem 1.1]{SVWZ} we have that $u \in L^\infty(\Om)$ (the only difference in the proof of \cite[Lemma 3.2]{SVWZ} is $``\leq"$ instead of $``="$ in the equation (3.3) there). This implies using $(h_1)$, that $w =h(x,u)-\la e_1 \in L^\infty(\Om)$ and so there exists $M_w, m_w>0$ such that $m_w u_0$ is a weak subsolution and $M_w u_0$ is a weak supersolution of $(\mc P_{\gamma,w})$. Hence by Lemma \ref{tcp} we have $m_w u_0 \leq u \leq M_w u_0$ which further implies that $u \in C(\overline{\Om})$ as $u_0 \in C(\overline{\Om})$ with $u_0=0$ in $\mb R^n \setminus \Om$ (see Proposition \ref{p3.1}) and $u^{-\gamma} \in L^\infty_{\text{loc}}(\Om)$. Finally as in the proof of \cite[Theorem 1.5, page 16]{AGS} we conclude that $u$ is in $C(\overline{\Om})\cap\left(\cap_{1\leq p<\infty}W^{2,p}_{\text{loc}}(\Om)\right)$ if $s \in (1,1/2]$ and in $C(\overline{\Om})\cap\left(\cap_{1\leq p<n/(2s-1)}W^{2,p}_{\text{loc}}(\Om)\right)$ if $s \in (1/2,1)$. \qed\\
\textbf{Proof of theorem \ref{t1.5}:} Let $w$ be of the form $(\mc H_2)$. Suppose on the contrary that there exists a sequence $\{\la_k\}_{k \in \mb N}$ and $\{u_k\}_{k\in\mb N}$ such that $\la_k \ra -\infty$ and $u_k\in H^1_{\text{loc}}(\Om)\cap L^1(\Om)$ is a weak solution of $(\mc P_{\gamma,w})$ with $\la=\la_k$. Without loss of generality, we may assume that $\la_k <0$. Also from Theorems \ref{t3.2} and \ref{t1.1}, we have $u_k-u_0 \in H_0^1(\Om)$. \\
\textbf{Case 1:} First suppose that $z_k:=(u_0-u_k)/\la_k$ is bounded in $H_0^1(\Om)$, and hence up to a subsequence $z_k \rightharpoonup \hat z$ weakly in $H_0^1(\Om)$. Moreover, we remark that $\hat z \geq 0$ in $\Om$. Indeed, taking into account the assumptions $(h_1),\,(h_2)$, it can be shown that up to a subsequence each $u_k$ is a weak supersolution of $(\mc P_{\gamma,0})$ and applying Lemma \ref{tcp}, it follows that $u_k\geq u_0$ in $\Om$, which further gives $\hat z\geq 0$ in $\Om$. Then we have for every $v \in H_0^1(\Om)\cap L^\infty_c(\Om)$ with $v \geq 0$ in $\Om$
\begin{align}\label{e5.15} \nonumber
	\I\Om \na z_k \na vdx+&\iint_{\mb R^{2n}}\mc B(z_k)(x,y)\mc B(v)(x,y)d\nu\\ =&-\frac{1}{\la_k}\I\Om\left((u_0-\la_kz_k)^{-\gamma}-u_0^{-\gamma}\right)v dx + \I\Om\left(\frac{h_1(x,-\la_kz_k)}{-\la_k}+e_1\right)vdx.
\end{align}
		Since $u_0\geq C>0$ on the support of $v$, we can pass to the limit as $k \ra \infty$  in \eqref{e5.15} and taking in to account \eqref{e5.8}, we obtain
		\begin{align*}
			\I\Om \na \hat z \na vdx+&\iint_{\mb R^{2n}}\mc B(z)(x,y)\mc B(v)(x,y)d\nu = \I\Om(\al \hat z +e_1)v dx,
		\end{align*}
		for every $v \in H_0^1(\Om)\cap L^\infty_c(\Om)$ with $v \geq 0$ in $\Om$. Using density arguments, we can choose $v =e_1$ above and obtain
		\begin{align}\label{5.16}
			\la_1\I\Om \hat ze_1dx = 	\I\Om \na \hat z \na e_1dx+\iint_{\mb R^{2n}}\mc B(\hat z)(x,y)\mc B(e_1)(x,y)d\nu = \al\I\Om \hat ze_1dx +\I\Om e_1^2 dx.
		\end{align}
		Now if $\hat z \equiv 0$, then \eqref{5.16} contradicts the fact that $e_1 \not \equiv 0$ in $\Om$. Further if $\hat z \not \equiv 0$, then using $\hat z \geq 0$, we get a contradiction to the assumption that $\al >\la_1$.\\
		\textbf{Case 2:} Now suppose that $\la_k/\|u_k-u_0\|$ is convergent to $0$. If we set $r_k =\|u_k-u_0\|$ and $z_k =(u_k-u_0)/r_k$, then $z_k \geq 0$ in $\Om$ and up to a subsequence $z_k \rightharpoonup \hat z$ weakly in $H_0^1(\Om)$ with $\hat z \geq 0$ in $\Om$ (which follows similarly as in Case 1). Now we claim that $\hat z \not \equiv 0$ in $\Om$. Indeed, for every $v \in H_0^1(\Om)\cap L^\infty_c(\Om)$ with $v \geq 0$ in $\Om$, we have
			\begin{align}\nonumber\label{e5.16}
			\I\Om \na z_k \na vdx+&\iint_{\mb R^{2n}}\mc B(z_k)(x,y)\mc B(v)(x,y)d\nu\\ =&\frac{1}{r_k}\I\Om\left((u_0+r_kz_k)^{-\gamma}-u_0^{-\gamma}\right)v dx + \I\Om\left(\frac{h_1(x,r_kz_k)}{r_k}-\frac{\la_k}{r_k}e_1\right)v\,dx.
		\end{align}
		By density choosing $v =z_k$ in \eqref{e5.16}, we obtain
		\begin{align*}
			1=&
            \frac{1}{r_k}\I\Om\left((u_0+r_kz_k)^{-\gamma}-u_0^{-\gamma}\right)z_k dx + \I\Om\left(\frac{h_1(x,r_kz_k)}{r_k}-\frac{\la_k}{r_k}e_1\right)z_kdx\\
			\leq& \I\Om\left(\frac{h_1(x,r_kz_k)}{r_k}-\frac{\la_k}{r_k}e_1\right)z_k dx.
		\end{align*}
		Taking again in to account \eqref{e5.9} and the fact that $\la_k/r_k \ra 0$ as $k \ra \infty$, it follows that
	\begin{align*}
	1 \leq {\al} \I\Om \hat z^2 dx,
	\end{align*}
	which implies $\hat z \not \equiv 0$ in $\Om$. Furthermore arguing as Lemma \ref{t5.2}, we get
		\begin{align*}
			\I\Om \na \hat z \na vdx+\iint_{\mb R^{2n}}\mc B(\hat z)(x,y)\mc B(v)(x,y)d\nu \geq \al\I\Om \hat z v dx,
		\end{align*}
		for every $v \in H_0^1(\Om)\cap L^\infty_c(\Om)$ with $v \geq 0$ in $\Om$. Now, using density, we take $v =e_1$ in above inequality, and using the facts that $\hat z \geq 0$ and $\hat z \not \equiv 0$, we obtain a contradiction to the assumption $\al > \la_1$. This completes the proof. \qed

		\section{Appendix}
		\subsection{Variational characterization}
			In this subsection, we present two essential results related to variational characterization (Theorems \ref{t3.2} and \ref{t1.1}), which played a key role in proving our main results. The following result establishes a connection between the solutions of the variational inequality and the minimizer of an appropriate functional.
		\begin{Theorem}\label{t3.2}
			Let $w \in H^{-1}(\Om)$ and $u \in H^1_{\text{loc}}(\Om) \cap L^1(\Om)$. Suppose $\Phi_w$ is as defined in \eqref{e3.11}. Then the following are equivalent:
			\begin{enumerate}
				\item [(a)] $u$ is the minimum of $\Phi_w$.
				\item [(b)] $u$ satisfies the following
				\begin{align}\label{e3.12}
					\begin{cases}
						u>0~\text{in}~\Om~\text{and}~u^{-\gamma} \in L^1_{\text{loc}}(\Om),\\
						\I\Om \na u\na(v-u)dx +\I{\mb R^n}\I{ \mb R^n}\mathcal{B}(u)(x,y)\mathcal{B}(v-u)(x,y)\,d\nu	-\I\Om u^{-\gamma}(v-u)dx\\
						~~~~~~~~~~~~~~~\geq \ld w,v-u\rd, \quad \forall v \in u+(H_0^1(\Om)\cap L^\infty_c(\Om))~\text{with}~v \geq 0~{in}~\Om,\\
						u \leq 0~\text{on}~\partial \Om.
					\end{cases}
				\end{align}
			\end{enumerate}
			In particular, for every $w \in H_0^1(\Om)$, problem \eqref{e3.12} has one and only one solution $u \in  H^1_{\text{loc}}(\Om) \cap L^1(\Om)$.
		\end{Theorem}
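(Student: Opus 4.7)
The strategy is to exploit the convex-analytic structure of $\Phi_w$ and to match its subgradient inequality at the minimum with the variational inequality in (b), absorbing the $u_0$-contributions via the weak formulation of $(\mc P_{\gamma,0})$. Existence and uniqueness of a minimum come first: on the affine space $u_0+H_0^1(\Om)$ the quadratic part of $\Phi_w$ is strictly convex, continuous and coercive (by Remark \ref{rm2.4}), the integrand $J_0(x,\cdot)$ is convex, nonnegative and lower semicontinuous with $J_0(x,0)=0$, and $-\ld w,\cdot\rd$ is linear and continuous. Fatou's lemma propagates weak lower semicontinuity to the $J_0$-integral along subsequences extracted to converge a.e., so the direct method yields a unique minimizer.

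For the implication (a)$\Rightarrow$(b), write $u=u_0+z$ with $z\in H_0^1(\Om)$ and note that $J_0(x,z)\in L^1(\Om)$ already forces $u\geq 0$ a.e. Given any $v\in u+(H_0^1(\Om)\cap L_c^\infty(\Om))$ with $v\geq 0$, the convex combination $(1-t)u+tv$ lies in the effective domain of $\Phi_w$ for $t\in[0,1]$, so $t\mapsto\Phi_w((1-t)u+tv)$ is convex and minimized at $t=0$, hence has nonnegative right derivative there. Computing this derivative requires first establishing $u>0$ and $u^{-\gamma}\in L^1_{\loc}(\Om)$: if $u$ vanished on a positive-measure subset of some $K\Subset\Om$, a direct computation using $J_0(x,t)=P(u_0+t)-P(u_0)+tu_0^{-\gamma}$ shows that for $\xi=v-u\geq 0$ concentrated on that subset the incremental quotient of the $J_0$-term tends to $-\infty$ as $t\to 0^+$; similarly, if $u^{-\gamma}$ were not locally integrable, the right derivative would be $-\infty$ by monotone convergence since $D_tJ_0(x,z)=u_0^{-\gamma}-u^{-\gamma}$, while $u_0^{-\gamma}$ is locally bounded by Proposition \ref{p3.1}. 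Minimality excludes both possibilities. Once $u>0$ and $u^{-\gamma}\in L^1_{\loc}(\Om)$ are in hand, monotone convergence gives
\begin{equation*}
\I\Om\na(u-u_0)\na\xi\,dx+\iint_{\mb R^{2n}}\mc B(u-u_0)(x,y)\mc B(\xi)(x,y)\,d\nu+\I\Om(u_0^{-\gamma}-u^{-\gamma})\xi\,dx\geq\ld w,\xi\rd,
\end{equation*}
for every $\xi=v-u\in H_0^1(\Om)\cap L_c^\infty(\Om)$ with $v\geq 0$. Absorbing the $u_0$-contributions through the weak equation $\mc Mu_0=u_0^{-\gamma}$ (extended from $C_c^\infty(\Om)$ to $H_0^1(\Om)\cap L_c^\infty(\Om)$ test functions by a density argument, valid because $u_0\geq C_K>0$ on every $K\Subset\Om$) yields the VI in (b).

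For the reverse direction (b)$\Rightarrow$(a), I would use the chord--subgradient inequality from convexity of the quadratic part and of $J_0(x,\cdot)$:
\begin{equation*}
\Phi_w(\tilde v)-\Phi_w(u)\geq\I\Om\na(u-u_0)\na(\tilde v-u)\,dx+\iint_{\mb R^{2n}}\mc B(u-u_0)\mc B(\tilde v-u)\,d\nu+\I\Om(u_0^{-\gamma}-u^{-\gamma})(\tilde v-u)\,dx-\ld w,\tilde v-u\rd,
\end{equation*}
valid for every $\tilde v\in u_0+H_0^1(\Om)$ in the effective domain of $\Phi_w$. Adding the weak equation for $u_0$ rewrites the right-hand side as the VI quantity of (b) tested against $\tilde v-u$, which is $\geq 0$ whenever $\tilde v-u\in H_0^1(\Om)\cap L_c^\infty(\Om)$. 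The main obstacle is then the extension of the VI, stated for compactly supported bounded perturbations, to arbitrary admissible $\tilde v$: I would truncate $\tilde v$ at heights $\min\{\tilde v,k\}$ and multiply by cutoff functions $\eta_\e\to 1$ supported away from $\pa\Om$, keeping the approximating perturbations in $H_0^1(\Om)\cap L_c^\infty(\Om)$ and $\geq 0$, then pass to the limit in each integral using dominated/monotone convergence and the bound $J_0\geq 0$ together with the summability $J_0(\cdot,\tilde v-u_0)\in L^1(\Om)$ to control the singular contributions near $\pa\Om$. Uniqueness of the VI solution then follows at once from strict convexity of $\Phi_w$ combined with the equivalence (a)$\Leftrightarrow$(b).
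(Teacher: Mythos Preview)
Your proposal has two genuine gaps.

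\textbf{In (a)$\Rightarrow$(b) you never prove $u\leq 0$ on $\partial\Om$.} Writing $u=u_0+z$ with $z\in H_0^1(\Om)$ does \emph{not} automatically give $(u-\e)^+\in H_0^1(\Om)$ for every $\e>0$ in the sense of Definition~\ref{d2.5}: for $\gamma>1$ one has $u_0\in H^1_{\loc}(\Om)$ only, so on the set $\{u_0+z>\e\}$ the gradient $\na u_0$ need not lie in $L^2$ up to the boundary, and that set need not be compactly contained in $\Om$ since $z\in H_0^1(\Om)$ can be positive arbitrarily close to $\partial\Om$. The paper devotes a substantial block to this: it tests \eqref{e3.17} with $v=Z+u_0$ for the auxiliary function $Z=\min\{u-u_0,\,\e-(u_0-\mu)^+\}$, combines this with a local and a nonlocal Kato inequality for $u_0$, and only then obtains a uniform $H_0^1$ bound on $((u_0-\mu)^++u-u_0-\e)^+$ that survives the limit $\mu\to 0^+$. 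Your plan contains nothing corresponding to this step.

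\textbf{In (b)$\Rightarrow$(a) your chord--subgradient argument presupposes $\Phi_w(u)<+\infty$.} To write $\Phi_w(\tilde v)-\Phi_w(u)\geq\ldots$ you need $u\in u_0+H_0^1(\Om)$ and $J_0(\cdot,u-u_0)\in L^1(\Om)$, neither of which is part of the hypotheses in (b): there $u$ is only assumed to lie in $H^1_{\loc}(\Om)\cap L^1(\Om)$ with $u\leq 0$ on $\partial\Om$. Your truncation/cutoff scheme operates on $\tilde v$, not on $u$, so it cannot manufacture the missing regularity of $u$. The paper sidesteps this entirely: it lets $\tilde u$ be the (already established) minimizer of $\Phi_w$, observes that by the first implication $\tilde u$ also satisfies \eqref{e3.12}, and then notes that any solution of \eqref{e3.12} is simultaneously a weak sub- and supersolution of $(\mc P_{\gamma,w})$, so Lemma~\ref{tcp} forces $u=\tilde u$. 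This comparison-principle route is both shorter and avoids the regularity issue altogether; I recommend replacing your (b)$\Rightarrow$(a) argument with it.
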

		\begin{proof}
			We assume that $(a)$ holds, that is $u$ is the minimum of $\Phi_w$. Since $\Phi_w$ is strictly convex, lower semicontinuous, and coercive, standard minimization techniques guarantee that $u$ is unique and $u \in u_0 + H_0^1(\Om)$ (and by Proposition \ref{p3.1}, $u \in L^1(\Om)$). Clearly $u$ lies in domain of $\Phi_w$, which implies $J _0(x,u-u_0) \in L^1(\Om)$ and using \eqref{edP} and \eqref{edj_0}, we conclude that
			\begin{align}\label{e3.13}
				u \geq 0~\text{in}~\Om.
			\end{align}
			Now let $v \in u_0 + H_0^1(\Om)$ be such that $J_0(x,v-u_0) \in L^1(\Om)$. Then $v \geq 0$ in $\Om$, and additionally, $v-u \in H_0^1(\Om)$. Since $D_t J_0(\cdot,t)$ is increasing (see \eqref{e3.10}), for $\hat z \in (\min\{(v-u_0),(u-u_0)\}, \max\{(v-u_0),(u-u_0)\})$, we deduce that
			\begin{align*}
				J_0(x,v-u_0) -J_0(x,u-u_0) =(u_0^{-\gamma}-(u_0+\hat z)^{-\gamma})(v-u) \geq (u_0^{-\gamma}-u^{-\gamma})(v-u),
			\end{align*}
			which in combination with $J_0(x,u-u_0) \in L^1(\Om)$ and $J_0(x,v-u_0) \in L^1(\Om)$ implies that
			\begin{align}\label{e3.14}
				\left({u_0^{-\gamma}} -{u^{-\gamma}}\right)(v-u) \in L^1(\Om).
			\end{align}	
			In particular we have
			\begin{align*}
				\left({u_0^{-\gamma}} -{u^{-\gamma}}\right) v \in L^1(\Om),~\forall v \in C_c^\infty(\Om)~\text{with}~v \geq 0
			\end{align*}
			and so $u^{-\gamma} \in L^1_{\text{loc}}(\Om)$ and $u>0$ in $\Om$.
			Now using convexity of $J_0(x,\cdot)$ we see that
			\begin{align*}
				J_0(x,u-u_0+t(v-u))=J_0(x,t(v-u_0)+(1-t)(u-u_0)) \in L^1(\Om),~ \forall t \in [0,1].
			\end{align*}
			Since $u$ is the point of minimum of $\Phi_w$, for $t\in [0,1]$ we get
			\begin{align}\nonumber \label{e3.16}
				0 \leq& \frac{\Phi_w(u+t(v-u))-\Phi_w(u)}{t}\\ \nonumber
				=&\I{\Om} \na(u-u_0)\na(v-u)dx +\iint_{\mb R^{2n}}\mc B (u-u_0)(x,y)\mc B(v-u)(x,y)d\nu -\ld w,v-u\rd\\ \nonumber
				&+ \frac{t}{2}\left(\| \na(v-u)\|_{L^2(\Om)}^2 +[v-u]^{2}_{s,\mb R^n}\right) \\
                &\qquad+ \frac{1}{t}\left(\I\Om J_0(x,u-u_0+t(v-u))dx-\I\Om J_0(x,u-u_0)dx\right)\\ \nonumber
				=&\I{\Om} \na(u-u_0)\na(v-u)dx +\iint_{\mb R^{2n}}\mc B (u-u_0)(x,y)\mc B(v-u)(x,y)d\nu-\ld w,v-u\rd\\ 
				&+ \frac{t}{2}\left(\| \na(v-u)\|_{L^2(\Om)}^2 +[v-u]^{2}_{s,\mb R^n}\right) + \I\Om\left({u_0^{-\gamma}}-{(u_0+z_t)^{-\gamma}}\right)(v-u)dx,
			\end{align}
			where $z_t \in (\min\{u-u_0+t(v-u),u-u_0\},\max\{u-u_0+t(v-u),u-u_0\})$. Recalling \eqref{e3.14} and that $v-u \in H_0^1(\Om)$, passing to the limit as $t \ra 0^+$ in \eqref{e3.16} we obtain
			\begin{align}\nonumber\label{e3.17}
				\I{\Om} \na(u-u_0)\na(v-u)dx +& \iint_{\mb R^{2n}}\mc B (u-u_0)(x,y)\mc B(v-u)(x,y)d\nu\\
				\geq& \I\Om\left({u^{-\gamma}}-{u_0^{-\gamma}}\right)(v-u)dx + \ld w,v-u\rd,
			\end{align}
			for every $v \in u_0 +H_0^1(\Om)$ such that $J_0(x,v-u_0) \in L^1(\Om)$. For $\e ,\mu >0$, let us define
			\begin{align*}
				Z= \min\{u-u_0,\e-(u_0-\mu)^+\}.
			\end{align*}
			Since $u_0 \in C(\overline{\Om})$, we have $Z \in H_0^1(\Om)$. Also either $Z =u-u_0$ or $\e =Z \leq u-u_0$ or $Z =\e+\mu-u_0$ and $u_0 \geq \mu$. In all three cases we have that $J_0(x,Z) \in L^1(\Om)$ and that
			\begin{align*}
				((u_0-\mu)^+ +u-u_0-\e)^+=u-u_0-Z \in H_0^1(\Om),
			\end{align*} 
			using \eqref{e3.14}
			\begin{align}\label{e3.18}
				\left({u_0^{-\gamma}} -{u^{-\gamma}}\right)(Z+u_0-u) \in L^1(\Om)
			\end{align}
			and using \eqref{e3.17}
			\begin{align}\label{e3.19}\nonumber 
				\I{\Om} \na(u-u_0)\na(Z+u_0-u)dx +&\iint_{\mb R^{2n}}\mc B (u-u_0)(x,y)\mc B(Z+u_0-u)(x,y)d\nu\\
				\geq& \I\Om\left({u^{-\gamma}}-{u_0^{-\gamma}}\right)(Z+u_0-u)dx +\ld w,Z+u_0-u\rd.
			\end{align}
			In particular, since $u\ne u_0 +Z$ implies $ u >\e$, from \eqref{e3.18} we have that both
			\begin{align} \label{e3.20} 
				{u_0^{-\gamma}} (Z+u_0-u) \in L^1(\Om)~\text{and}~{u^{-\gamma}}(Z+u_0-u) \in L^1(\Om).
			\end{align}
			Now using Proposition \ref{p3.1}, we have
			\begin{align}\label{e3.21}
				\I\Om \na u_0 \na \varphi +\iint_{\mb R^{2n}}\mc B(u_0)(x,y)\mc B(\varphi)(x,y)d\nu = \I\Om {\varphi}{u_0^{-\gamma}}dx,~ \forall \varphi \in C^\infty_c(\Om).
			\end{align}
			Using the local and nonlocal Kato inequalities (see \cite[Theorem 2.4]{V}and \cite{CV} respectively) we have
			\begin{align}\label {e3.22}
				\I\Om \na (u_0-\mu)^+ \na \varphi dx+\iint_{\mb R^{2n}}\mc B((u_0-\mu)^+)(x,y)\mc B(\varphi)(x,y)d\nu \leq \I\Om {\varphi}{u_0^{-\gamma}}dx,
			\end{align}
			for all $\varphi \in C^\infty_c(\Om), \varphi \geq 0$. Using standard arguments, we see that the inequality \eqref{e3.22} holds true for non-negative $\varphi \in H_0^1(\Om)$ with compact support contained in $\Om$. By density, let $ \{\varphi_k\}_{k\in\mathbb{N}} \in C^\infty_c(\Om)$ such that $\varphi_k^+ \ra u-u_0-Z $ in $H_0^1(\Om)$. Let us define
			\begin{align}\label{e3.23}
				\hat{\varphi}_k:= \min\{u-u_0-Z, \varphi_k^+\}.
			\end{align}
			Again since $u_0 \in C(\overline{\Om})$, we have $(u_0-\mu)^+ \in H_0^1(\Om)$. Now testing \eqref{e3.22} with $\hat{\varphi}_k$ defined in \eqref{e3.23} and passing to the limit using \eqref{e3.20} and dominated convergence theorem, we obtain
			\begin{align}\label {e3.24}\nonumber
				\I\Om \na (u_0-\mu)^+ \na (u-u_0-Z) dx+&\iint_{\mb R^{2n}}\mc B((u_0-\mu)^+)(x,y)\mc B(u-u_0-Z)(x,y)d\nu\\ \leq& \I\Om{(u-u_0-Z)}{u_0^{-\gamma}}dx.
			\end{align}
			Note that for any function $v$ since (see \cite[page 4046]{CMST} after equation (3.34) there)
			\begin{align*}
			\iint_{\mb R^{2n}} \mc B(v-v^+)(x,y)\mc B(v^+)(x,y) d\nu \geq 0,
			\end{align*}
			we have 
			\begin{align}\label{e3.25}
				\iint_{\mb R^{2n}} \mc B(v)(x,y)\mc B(v^+)(x,y) d\nu \geq [v^+]^2_{s,\mb R^n}.
			\end{align}
			Combining \eqref{e3.24} with \eqref{e3.19} and using \eqref{e3.25} with $v=(u_0-\mu)^++u-u_0-\e$ and recalling $v^+ = u-u_0-Z$, we obtain
			\begin{align*}
				\I\Om& |\na(u-u_0-Z)|^2dx\\
				\leq&  \I\Om |\na(u-u_0-Z)|^2dx +[u-u_0-Z]^2_{s,\mb R^n} \leq  \I\Om \na((u_0-\mu)^++u-u_0-\e)\na(u-u_0-Z)dx\\
				& +	\iint_{\mb R^{2n}} \mc B((u_0-\mu)^++u-u_0-\e)(x,y)\mc B(u-u_0-Z)(x,y) d\nu\\
				\leq& \I\Om {u^{-\gamma}}(u-u_0-Z)dx +\ld w,(u-u_0-Z)\rd \leq \e^{-\gamma}\I\Om(u-u_0-Z)dx+\ld w,(u-u_0-Z)\rd.
			\end{align*}
			Hence for any $\e >0$, $((u_0-\mu)^++u-u_0-\e)^+$ is uniformly bounded with respect to $\mu$ in $H_0^1(\Om)$. Using Fatou's lemma for $\mu \ra 0^+$, we have that $(u-\e)^+ \in H_0^1(\Om)$. This proves $u \leq 0$ on $\partial \Om$.\\
			Now let $v \in u+(H_0^1(\Om) \cap L_c^\infty(\Om))$ with $v \geq 0$ in $\Om$ and $\psi \in C_c^\infty(\Om)$, $\psi \geq 0$ in $\Om$ such that $\psi \equiv 1$ where $v\ne u$. Then for any $\e >0$, $J_0(x,v+\e \psi-u_0) \in L^1(\Om)$ and therefore by using \eqref{e3.17} we have
			\begin{align}\nonumber\label{e3.26}
				\I{\Om} \na(u-u_0)\na(v+\e\psi-u)dx +&\iint_{\mb R^{2n}}\mc B (u-u_0)(x,y)\mc B(v+\e\psi-u)(x,y)d\nu\\
				\geq& \I\Om\left({u^{-\gamma}}-{u_0^{-\gamma}}\right)(v+\e\psi-u)dx + \ld w,v+\e \psi-u\rd.
			\end{align}
			Taking $\e \ra 0$ in \eqref{e3.26} we obtain
			\begin{align}\nonumber\label{e3.27}
				\I{\Om} \na(u-u_0)\na(v-u)dx +&\iint_{\mb R^{2n}}\mc B (u-u_0)(x,y)\mc B(v-u)(x,y)d\nu\\
				\geq& \I\Om\left({u^{-\gamma}}-{u_0^{-\gamma}}\right)(v-u)dx + \ld w,v-u\rd.
			\end{align}
			By \eqref{e3.21} we also have that
			\begin{align*}
				\I\Om \na u_0 \na (v-u) +\iint_{\mb R^{2n}}\mc B(u_0)(x,y)\mc B(v-u)(x,y)d\nu = \I\Om {(v-u)}{u_0^{-\gamma}}dx
			\end{align*}
			and together with \eqref{e3.27}, this completes the proof of  \eqref{e3.12}.\\
			Conversely, let $(b)$ holds, that means $u$ is a solution to \eqref{e3.12} and let $\tilde{u} \in H_{\mathrm{loc}}^1(\Om)\cap L^1(\Om)$ be the minimum of the functional $\Phi_w$. Then, as we just proved above, $\tilde{u}$ satisfies \eqref{e3.12}. Thus both $u$ and $\tilde{u}$ are weak sub-supersolution to the problem $(\mc P_{\gamma,w})$. Hence by Lemma \ref{tcp}, we have $u =\tilde{u}$ i.e., $u$ is the minimum of $\Phi_w$.
		\end{proof}

The following result offers a variational characterization of weak solutions to the mixed local-nonlocal singular problem $(\mc P_{\gamma,w})$ for any $\gamma>0$ and $w\in H^{-1}(\Om)$. 
	\begin{Theorem}\label{t1.1}
		Let $\gamma >0$ and $u \in H^1_{\text{loc}}(\Om)\cap L^1(\Om)$. Consider the following two problems $(\mc G)$ and $(\mc H)$:
		\begin{align*}
			(\mc G)	\begin{cases}
				u>0~\text{in}~\Om~\text{and}~u^{-\gamma} \in L^1_{\text{loc}}(\Om),\\
				\I\Om \na u \na \varphi + \iint_{\mb R^{2n}} \mathcal{B}(u)(x,y)\mathcal{B}(\varphi)(x,y)\,d\nu-\I\Om u^{-\gamma}\varphi dx =\ld w,\varphi\rd, \forall \varphi \in C_c^\infty(\Om),\\
				u \leq 0~\text{on}~\partial \Om
			\end{cases}
		\end{align*}
		and
		\begin{align*}
			(\mc H)	\begin{cases}
				u>0~\text{in}~\Om~\text{and}~u^{-\gamma} \in L^1_{\text{loc}}(\Om),\\
				\I\Om \na u \na (v-u)dx + \iint_{\mb R^{2n}} \mc Bu(x,y) \mc B (v-u)(x,y)d\nu-\I\Om u^{-\gamma}(v-u) dx \geq \ld w,v-u\rd,\\
				~~~~~~~~~~~~~~~~~~~~~~~~~\forall v \in u+ (H_0^1(\Om) \cap L_c^\infty(\Om))~\text{with}~v\geq 0~\text{in}~\Om,\\
				u \leq 0~\text{on}~\partial \Om,
			\end{cases}
		\end{align*}
        If $w \in H^{-1}(\Om)$, then $(\mc G)$ implies $(\mc H)$. Moreover, if $w \in L^{1}_{\mathrm{loc}}(\Om)$, then $(\mc H)$ implies $(\mc G)$.
	\end{Theorem}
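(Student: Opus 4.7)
\textbf{Proof plan for Theorem \ref{t1.1}.}

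For the direction $(\mc G)\Rightarrow(\mc H)$ under $w\in H^{-1}(\Om)$, the plan is to enlarge the class of admissible test functions in $(\mc G)$ from $C_c^\infty(\Om)$ to $H_0^1(\Om)\cap L_c^\infty(\Om)$. Given any $\psi$ in the latter space, I approximate it by a standard mollification $\{\psi_k\}\subset C_c^\infty(\Om)$ with $\psi_k\to\psi$ in $H_0^1(\Om)$, $\|\psi_k\|_{L^\infty(\Om)}\leq\|\psi\|_{L^\infty(\Om)}$ and $\supp(\psi_k)\subset K$ for a fixed $K\Subset\Om$. Testing $(\mc G)$ with $\psi_k$ and letting $k\to\infty$, the local gradient and nonlocal bilinear forms pass to the limit by $H_0^1(\Om)$-convergence (combined with Lemma \ref{l2.2} for the nonlocal one), the duality $\ld w,\psi_k\rd$ by $w\in H^{-1}(\Om)$, and the singular term by dominated convergence, since $|u^{-\gamma}\psi_k|\leq\|\psi\|_{L^\infty(\Om)}u^{-\gamma}\chi_K\in L^1(\Om)$ (as $u^{-\gamma}\in L^1_\loc(\Om)$). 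The resulting identity applied to $\psi=v-u$ yields $(\mc H)$ at once (in fact with equality).

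For the direction $(\mc H)\Rightarrow(\mc G)$ under $w\in L^1_\loc(\Om)$, the plan is to probe the variational inequality with two perturbations. Fix $\varphi\in C_c^\infty(\Om)$ with $\varphi\geq 0$ and $\e>0$. Testing $(\mc H)$ with the admissible $v=u+\e\varphi$ (so $v\geq u\geq 0$) and dividing by $\e$ yields
\begin{align*}
\I{\Om}\na u\cdot\na\varphi\,dx+\iint_{\mb R^{2n}}\mc B(u)(x,y)\,\mc B(\varphi)(x,y)\,d\nu-\I{\Om}u^{-\gamma}\varphi\,dx\geq\I{\Om}w\varphi\,dx.
\end{align*}
For the reverse inequality, the non-negativity constraint $v\geq 0$ rules out $v=u-\e\varphi$, so I instead test with the truncation $v_\e:=(u-\e\varphi)^+\in u+(H_0^1(\Om)\cap L_c^\infty(\Om))$. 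Then $v_\e-u=-\psi_\e$ with $\psi_\e:=\min\{u,\e\varphi\}$; inserting this in $(\mc H)$ and dividing by $\e$, I intend to pass to $\e\to 0^+$ using $\psi_\e/\e=\min\{u/\e,\varphi\}\to\varphi$ almost everywhere (since $u>0$ a.e.) with the domination $\psi_\e/\e\leq\varphi$. Dominated convergence handles the singular term and the $w$-term (the latter crucially using $w\in L^1_\loc(\Om)$); for the local gradient one uses
\begin{align*}
\I{\Om}\na u\cdot\na\psi_\e\,dx=\I{\{u\leq\e\varphi\}}|\na u|^2\,dx+\e\I{\{u>\e\varphi\}}\na u\cdot\na\varphi\,dx
\end{align*}
and discards the non-negative first piece to preserve the direction of the inequality, while the second term, after division by $\e$, tends to $\int_\Om\na u\cdot\na\varphi\,dx$.

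The main technical obstacle is the nonlocal term. Writing $\psi_\e/\e=\varphi-\xi_\e$ with $\xi_\e:=(\varphi-u/\e)^+$, the task reduces to showing
\begin{align*}
\iint_{\mb R^{2n}}\mc B(u)(x,y)\,\mc B(\xi_\e)(x,y)\,d\nu\longrightarrow 0\quad\text{as }\e\to 0^+.
\end{align*}
Although $\xi_\e\to 0$ pointwise a.e., $0\leq\xi_\e\leq\varphi$ and $\supp(\xi_\e)\subset\supp(\varphi)\cap\{u<\e\|\varphi\|_{L^\infty}\}$, global $H^s$-convergence of $\xi_\e$ is not immediate. The plan is to split the double integral into the region $\supp(\varphi)\times\supp(\varphi)$ (on which Lemma \ref{l2.2} provides $[u]_{s,\supp(\varphi)}<\infty$, and one controls the contribution via a Cauchy--Schwarz estimate combined with absolute continuity of $\iint|\mc B(u)|^2\,d\nu$ on the shrinking set $\{u<\e\|\varphi\|_{L^\infty}\}$) and its complement (on which $\xi_\e$ vanishes in one of the variables), applying dominated convergence on each piece using the pointwise bounds on $\xi_\e$ together with the integrability guaranteed by Proposition \ref{p2.6}. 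Once both opposite inequalities are established for $\varphi\in C_c^\infty(\Om)$ with $\varphi\geq 0$, the identity in $(\mc G)$ holds for such $\varphi$; the extension to arbitrary $\varphi\in C_c^\infty(\Om)$ follows by linearity, since any such $\varphi$ decomposes as a difference of two non-negative $C_c^\infty(\Om)$-functions (e.g., $\varphi=(M+\varphi)\chi-M\chi$ for $M=\|\varphi\|_{L^\infty(\Om)}$ and $\chi\in C_c^\infty(\Om)$ with $\chi\equiv 1$ on $\supp(\varphi)$).
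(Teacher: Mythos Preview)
Your overall strategy coincides with the paper's: density for $(\mc G)\Rightarrow(\mc H)$, and for $(\mc H)\Rightarrow(\mc G)$ the two-sided test with $v=u+\e\varphi$ and $v=(u-\e\varphi)^+$ (equivalently the paper's $(u+t\varphi)^+$ with $\varphi\le 0$). Your treatment of the gradient, singular, and $w$-terms is correct and matches the paper.

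The genuine gap is in the nonlocal term. You aim to prove the \emph{exact} convergence
\[
\iint_{\mb R^{2n}}\mc B(u)(x,y)\,\mc B(\xi_\e)(x,y)\,d\nu\longrightarrow 0,\qquad \xi_\e=(\varphi-u/\e)^+,
\]
via Cauchy--Schwarz on the region $\supp(\varphi)\times\supp(\varphi)$ together with the absolute continuity of $\iint|\mc B(u)|^2\,d\nu$ on the shrinking set $E_\e=\{u<\e\|\varphi\|_{L^\infty}\}$. This does not close: the Cauchy--Schwarz bound produces a factor $\big(\iint_{F_\e}|\mc B(\xi_\e)|^2\,d\nu\big)^{1/2}$, and since $\xi_\e=(\varphi-u/\e)^+$ one only has $[\xi_\e]_{s,K}\le [\varphi]_{s,K}+\e^{-1}[u]_{s,K}$, which may blow up like $\e^{-1}$; no rate on the first factor is available to compensate. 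Likewise, on the cross piece $E_\e\times(\supp(\varphi)\setminus E_\e)$ one has $\mc B(\xi_\e)(x,y)=\xi_\e(x)$, and the naive dominant $|\mc B(u)(x,y)|\,\varphi(x)\,|x-y|^{-n-2s}$ is \emph{not} integrable near the diagonal, so dominated convergence is not justified there either.

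What actually works (and what the paper does) is to abandon exact convergence and prove only the one-sided estimate
\[
\limsup_{\e\to 0^+}\iint_{\mb R^{2n}}\mc B(u)\,\mc B(\xi_\e)\,d\nu\le 0,
\]
which is all you need in order to conclude $G+N-S\le W$. This is obtained by splitting according to the sets $\{u>\e\varphi\}$, $\{u\le\e\varphi\}$ and, on the cross pieces, further according to the sign of $u(x)-u(y)$: on $E_\e\times E_\e$ one writes $\mc B(\xi_\e)=\mc B(\varphi)-\e^{-1}\mc B(u)$ and discards $-\e^{-1}\iint_{E_\e^2}|\mc B(u)|^2\le 0$; on $E_\e\times(\supp(\varphi)\setminus E_\e)\cap\{u(x)<u(y)\}$ one has $\mc B(u)\xi_\e(x)\le 0$ and discards it; on $E_\e\times(\supp(\varphi)\setminus E_\e)\cap\{u(x)\ge u(y)\}$ one checks (using $u(y)>\e\varphi(y)$) that $\xi_\e(x)\le \varphi(x)-\varphi(y)=\mc B(\varphi)(x,y)$, which furnishes the integrable dominant $|\mc B(u)\,\mc B(\varphi)|$ required for dominated convergence. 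This sign-based decomposition is precisely the content of the paper's estimates \eqref{e3.31}--\eqref{e3.36}.
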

\begin{proof}
Let $w\in H^{-1}(\Om)$. If $u$ satisfies $(\mc G)$, using a density argument it follows that
		\begin{align*}
			\I\Om \na u \na \varphi + \iint_{\mb R^{2n}} \mc Bu(x,y) \mc B \varphi(x,y)d\nu-\I\Om u^{-\gamma}\varphi dx =\ld w,\varphi\rd, \forall \varphi \in H_0^1(\Om)\cap C_c^\infty(\Om),
		\end{align*}
		whence $u$ satisfies $(\mc H)$.\\
		Now suppose that $w \in L^1_{\text{loc}}(\Om)$ and that $u$ satisfies $(\mc H)$. It is readily seen that, for every $\varphi \in C_c^\infty(\Om)$ with $\varphi \geq 0$,
		\begin{align}\label{e3.28}
			\I\Om \na u \na \varphi + \iint_{\mb R^{2n}} \mc Bu(x,y) \mc B \varphi(x,y)d\nu-\I\Om u^{-\gamma}\varphi dx \geq\int_{\Om}w\varphi\,dx.
		\end{align} 
		Now suppose $ \varphi \in C_c^\infty(\Om)$ with $\varphi \leq 0$. For $t>0$, let us define $\varphi_t =(u+t\varphi)^+$. Let us denote $K_{\varphi_t}=\text{supp}(\varphi_t)$ and $K_{\varphi_t}^c=\mb R^n \setminus K_{\varphi_t}$. Setting 
		\begin{align}\label{e3.29}
			v_t =\frac{(\varphi_t-u)}{t},
		\end{align}
		we have
		\begin{align}\label{e3.30}
			\I{K_{\varphi_t}} \na u \na \varphi dx \geq -\frac{1}{t}\I{K_{\varphi_t}^c}|\na u|^2dx +\I{K_{\varphi_t}} \na u \na \varphi dx \geq \I\Om \na u \na v_tdx.
		\end{align}
		Also 
		\begin{align}\nonumber\label{e3.31}
			&\iint_{\mb R^{2n}}\mathcal{B}(u)(x,y)\mathcal{B}(v_t)(x,y)\,d\nu\\ \nonumber
			=& \iint_{\mb R^{2n}\setminus(K_{\varphi_t}^c\times K_{\varphi_t}^c)}\mathcal{B}(u)(x,y)\mathcal{B}(v_t)(x,y)\,d\nu -\frac{1}{t}\iint_{K_{\varphi_t}^c\times K_{\varphi_t}^c} {|u(x)-u(y)|^2}\,d\nu\\ \nonumber
			\leq & \iint_{\mb R^{2n}\setminus(K_{\varphi_t}^c\times K_{\varphi_t}^c)} \mathcal{B}(u)(x,y)\mathcal{B}(v_t)(x,y)\,d\nu\\ \nonumber
			=& \iint_{K_{\varphi_t}\times K_{\varphi_t}} \mathcal{B}(u)(x,y)\mathcal{B}(\varphi)(x,y)\,d\nu
			+2 \iint_{(K_{\varphi_t}\times K_{\varphi_t}^c)\cap\{u(x)\geq u(y)\}} \mathcal{B}(u)(x,y)\mathcal{B}(v_t)(x,y)\,d\nu\\ 
			+& 2 \iint_{(K_{\varphi_t}\times K_{\varphi_t}^c)\cap\{u(x)<u(y)\}} \mathcal{B}(u)(x,y)\mathcal{B}(v_t)(x,y)\,d\nu:= I_1 +2I_2+2I_3. 
		\end{align}
		Now we estimate $I_1,~I_2$ and $I_3$. For this, first note that since $(\mc H)$ holds, by Theorem \ref{t3.2} we have $u \in H^1_{\text{loc}}(\Om){\cap L^1(\Om)}$ is the minimum of $\Phi_w$. This implies that $u\in u_0 +H^1_0(\Om)$.
        Thus in view of Proposition \ref{p2.6} and since $\varphi \in C^\infty_c(\Om)$, we obtain 
		\begin{equation}\label{e3.32}
			\begin{split}
				I_1 = \iint_{K_{\varphi_t}\times K_{\varphi_t}} \mathcal{B}(u)(x,y)\mathcal{B}(\varphi)(x,y)\,d\nu
				\leq \iint_{\Om\times\Om} |\mathcal{B}(u)(x,y)|\,|\mathcal{B}(\varphi)(x,y)|\,d\nu <+\infty. 
			\end{split}
		\end{equation}
		This means that
		\begin{equation}\label{3.33}
			\begin{split}
				\frac{\mathcal{B}(u)(x,y)\mathcal{B}(\varphi)(x,y)}{|x-y|^{n+2s}}\cdot \chi_{K_{\varphi_t}\times K_{\varphi_t}}(x,y)
				&\leq \frac{|\mathcal{B}(u)(x,y)\mathcal{B}(\varphi)(x,y)|}{|x-y|^{n+2s}} \in L^1(\Om \times \Om),
			\end{split}
		\end{equation}
		where by $\chi_V$ we denote the characteristic function of a set $V$. Using the definition of $v_t$ (see \eqref{e3.29}), we obtain
		\begin{align}\label{e3.34} \nonumber
			I_2 =& \iint_{(K_{\varphi_t}\times K_{\varphi_t}^c)\cap\{u(x)\geq u(y)\}} {\mathcal{B}(u)(x,y))(\varphi(x)-u(y)/t)}\,d\nu\\ \nonumber
			\leq &\iint_{(K_{\varphi_t}\times K_{\varphi_t}^c)\cap\{u(x)\geq u(y)\}} \mathcal{B}(u)(x,y)\mathcal{B}(\varphi)(x,y)\,d\nu\\ 
			\leq & \iint_{\Om \times \Om} |\mathcal{B}(u)(x,y)\mathcal{B}(\varphi)(x,y)|\,d\nu+\iint_{\Om \times (\mb R^n \setminus \Om)}|\mathcal{B}(u)(x,y)\mathcal{B}(\varphi)(x,y)|\,d\nu.
		\end{align}
		Now the first integral on R.H.S. of \eqref{e3.34} is finite, see \eqref{e3.32}. Also noting that $u(x)=\varphi(x) =0$ on $\mb R^n \setminus \Om$, $\text{dist}(\partial K_{\varphi},\partial \Om)= \hat{r}$ (say) and $u \in L^1(\Om)$, we conclude that
		\begin{align}\nonumber
			\iint_{\Om \times (\mb R^n \setminus \Om)}|\mathcal{B}(u)(x,y)\mathcal{B}(\varphi)(x,y)|\,d\nu\nonumber &\leq C(s,n,\Om)\I\Om |u(x)||\varphi(x)|dx\I{|y|\geq \hat{r}} \frac{1}{|y|^{n+2s}}dy < +\infty.
		\end{align}
		Hence, from \eqref{e3.34} we deduce that
		\begin{equation}\label{e3.35}
			\begin{split}
				\frac{\mathcal{B}(u)(x,y)\mathcal{B}(v_t)(x,y)}{|x-y|^{n+2s}}\cdot \chi_{(K_{\varphi_t}\times K_{\varphi_t}^c)\cap\{u(x)\geq u(y)\}}\leq \frac{\mathcal{B}(u)(x,y)\mathcal{B}(\varphi)(x,y)}{|x-y|^{n+2s}} \in L^1(\Om\times (\mb R^n \setminus \Om)).
			\end{split}
		\end{equation}
		Again using the definition of $v_t$, we see that
		\begin{align}\nonumber\label{e3.36}
			I_3 =\iint_{(K_{\varphi_t}\times K_{\varphi_t}^c)\cap\{u(x)<u(y)\}}& \mathcal{B}(u)(x,y)(\varphi(x)+u(y)/t)\,d\nu\\
			\leq&  -\frac{1}{t} \iint_{(K_{\varphi_t}\times K_{\varphi_t}^c)\cap\{u(x)<u(y)\}} {|u(x)-u(y)|^2}\,d\nu \leq 0.
		\end{align}
		Using \eqref{e3.30}, \eqref{e3.31} and \eqref{e3.36}, we obtain
		\begin{align}\nonumber\label{3.35}
			\I\Om \na u \na v_tdx +& \iint_{\mb R^{2n}}\mathcal{B}(u)(x,y)\mathcal{B}(v_t)(x,y)\,d\nu \leq 	\iint_{K_{\varphi_t}\times K_{\varphi_t}} \mathcal{B}(u)(x,y)\mathcal{B}(\varphi)(x,y)\,d\nu \\
			&+\I{K_{\varphi_t}} \na u\na\varphi dx+2 \iint_{(K_{\varphi_t}\times K_{\varphi_t}^c)\cap\{u(x)\geq u(y)\}} \mathcal{B}(u)(x,y)\mathcal{B}(v_t)(x,y)\,d\nu.
		\end{align}
		Observe that $|v_t| \leq |\varphi|$. Since $(\mc H)$ holds, we conclude from \eqref{3.35} that
		\begin{align}\nonumber\label{e3.37}
			&\I{K_{\varphi_t}} \na u \na \varphi dx+ \iint_{K_{\varphi_t}\times K_{\varphi_t}} \mathcal{B}(u)(x,y)\mathcal{B}(\varphi)(x,y)\,d\nu\\ 
			+&2 \iint_{(K_{\varphi_t}\times K_{\varphi_t}^c)\cap\{u(x)\geq u(y)\}} \mathcal{B}(u)(x,y)\mathcal{B}(v_t)(x,y)\,d\nu
			\geq \I\Om u^{-\gamma}v_t dx + \I\Om w v_tdx.	\end{align}
		Using \eqref{3.33}, \eqref{e3.35} and recalling that $u>0$ in $\Om$, by the dominated convergence theorem from \eqref{e3.37}, we finally get
		\begin{align}\nonumber\label{e3.38}
			\I{\Om} \na u \na \varphi dx+& \iint_{{\Om}\times{\Om}} \mathcal{B}(u)(x,y)\mathcal{B}(\varphi)(x,y)\,d\nu \\
			+2& \iint_{\Om\times (\mb R^n \setminus \Om) } \mathcal{B}(u)(x,y)\mathcal{B}(\varphi)(x,y)\,d\nu 
			\geq  \I\Om u^{-\gamma}\varphi dx + \I\Om w \varphi dx.	\end{align}
		Up to change of variable in the third integral on L.H.S. of \eqref{e3.38} we deduce
		\begin{align} \label{e3.39}
			\I{\Om} \na u \na \varphi dx+ \iint_{{\mb R^{2n}n}} \mathcal{B}(u)(x,y)\mathcal{B}(\varphi)(x,y)\,d\nu  
			\geq  \I\Om u^{-\gamma}\varphi dx + \I\Om w \varphi dx,
		\end{align}
		for all $\varphi \in C_c^\infty(\Om)$ with $\varphi \leq 0$. Combining \eqref{e3.28} and \eqref{e3.39} we infer that $u$ satisfies $(\mc G)$. This completes the proof.
        \end{proof}

	\subsection{Decomposition result} The final main result is a decomposition theorem, which is crucial to prove symmetry result. This result will be a consequence of the variational characterization Theorem \ref{t1.1} stated above.
    To this end, let $f: \Om \times \mb R \ra \mb R$ be a Carath\'eodory function that satisfies the following growth assumption:
	\begin{align*}
		(F): |f(x,t)| \leq |r(x)| + a |t|^{\frac{n+2}{n-2}}~\text{for }~x \in \Om~\text{every}~t \in \mb R,~\text{where}~r\in L^{\frac{2n}{n-2}}(\Om)\text{ and }a \in \mb R,~a>0.
	\end{align*}
	Further, let $u_0$ be the unique weak solution of purely singular problem $(\mc P_{\gamma,0})$ given by Proposition \ref{p3.1}. Next we define $f_1(x,t) = f(x,u_0(x)+t)$, $F_1(x,t)=\I0^t{f_1 }(x,s)ds$ and the $C^1$ functional $ J: H_0^1(\Om) \ra \mb R$ by
	\begin{equation}\label{J}
		J(u) =- \I\Om F_1(x,u)dx.
	\end{equation}
	Finally, we define $\Psi: H_0^1(\Om) \ra (-\infty,+\infty]$ by
	\begin{align}\label{edpsi}
		\Psi(v) =J(v) + K(v),
	\end{align}
    where $K$ is defined in \eqref{eDK}. We have the following decomposition result:
	\begin{Theorem}\label{t1.2}
		For every $\gamma >0$, the following assertions are equivalent:
		\begin{itemize}
			\item [(a)] $u \in H^1_{\text{loc}}(\Om) \cap L^{\frac{2n}{n-2}}(\Om)$ satisfies weakly
			\begin{align} \label{emdp}
				\begin{cases}
					u>0~\text{in}~\Om~\text{and}~u^{-\gamma} \in L^1_{\text{loc}}(\Om),\\
					\mc M u =u^{-\gamma} + f(x,u) ~\text{in}~ \mc D^\prime(\Om),\\
					u \leq 0~\text{on}~\partial \Om,
				\end{cases}
			\end{align}
			where $f$ satisfies the hypothesis \textbf{$(F)$}.
			\item[(b)] $u \in u_0 +H_0^1(\Om)$ and $u-u_0$ is a critical point of $\Psi$ in the sense of Definition \ref{dcp}.
		\end{itemize}
	\end{Theorem}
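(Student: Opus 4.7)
\textbf{Proof proposal for Theorem \ref{t1.2}.} The plan is to reduce the equivalence $(a)\Leftrightarrow(b)$ to the variational characterizations of Theorems \ref{t3.2} and \ref{t1.1}, with the bridge being the algebraic identity
\begin{equation*}
\Phi_w(u_0+\eta)=K(\eta)-\ld w,\eta\rd,\qquad \forall\,\eta\in H_0^1(\Om),
\end{equation*}
which is immediate from the definitions of $\Phi_w$ in \eqref{e3.11} and $K$ in \eqref{eDK}. Coupled with the computation $\ld J'(z),\phi\rd=-\I\Om f_1(x,z)\phi\,dx$ (from \eqref{J}), this identity shows that, for $z\in H_0^1(\Om)$, the statement ``$u_0+z$ minimizes $\Phi_w$ with $w=f_1(\cdot,z)$'' is word-for-word the Szulkin critical-point condition
$K(\eta)\geq K(z)-\ld J'(z),\eta-z\rd$ for all $\eta\in H_0^1(\Om)$, i.e.\ that $z$ is a critical point of $\Psi=K+J$ in the sense of Definition \ref{dcp}.

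The key preliminary observation is that, under $(F)$ and the integrability assumption $u\in L^{2n/(n-2)}(\Om)$ in $(a)$, the ``right-hand side data'' $w:=f(x,u)$ lies simultaneously in $H^{-1}(\Om)$ and in $L^1_{\loc}(\Om)$. Indeed, the critical bound $|f(x,u)|\leq|r|+a|u|^{(n+2)/(n-2)}$ together with $r\in L^{2n/(n-2)}(\Om)\subset L^{2n/(n+2)}(\Om)$ (as $\Om$ is bounded) yields $f(x,u)\in L^{2n/(n+2)}(\Om)$, and the dual Sobolev embedding $L^{2n/(n+2)}(\Om)\hookrightarrow H^{-1}(\Om)$ does the rest; local $L^1$-integrability is a fortiori.

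For $(a)\Rightarrow(b)$: $(a)$ gives that $u$ satisfies $(\mc G)$ with $w=f(x,u)\in H^{-1}(\Om)$, so Theorem \ref{t1.1} yields $(\mc H)$, and Theorem \ref{t3.2} then shows that $u$ is the (unique) minimizer of $\Phi_w$; in particular $u\in u_0+H_0^1(\Om)$. Setting $z=u-u_0\in H_0^1(\Om)$ and invoking the bridging identity, the $\Phi_w$-minimality of $u$ translates at once into the Szulkin criticality of $z$ for $\Psi$. For $(b)\Rightarrow(a)$: since $u_0\in L^\infty(\Om)$ by Proposition \ref{p3.1} and $z\in H_0^1(\Om)\hookrightarrow L^{2n/(n-2)}(\Om)$, the candidate $u:=u_0+z$ lies in $L^{2n/(n-2)}(\Om)$ and so the data $w:=f_1(x,z)=f(x,u)$ belongs to $H^{-1}(\Om)\cap L^1_{\loc}(\Om)$ by the preceding step. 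The Szulkin condition for $z$ rewrites, via the identity, as the $\Phi_w$-minimality of $u$; Theorem \ref{t3.2} then delivers $(\mc H)$ (together with $u>0$ in $\Om$, $u^{-\gamma}\in L^1_{\loc}(\Om)$ and $u\leq 0$ on $\partial\Om$), and the $L^1_{\loc}$ part of Theorem \ref{t1.1} upgrades $(\mc H)$ to $(\mc G)$, i.e.\ to the distributional equation $\mc Mu=u^{-\gamma}+f(x,u)$ in $\mc D'(\Om)$.

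The main technical delicacy will be the simultaneous verification that $w=f(x,u)$ lies in both $H^{-1}(\Om)$ and $L^1_{\loc}(\Om)$: the former is required to apply Theorem \ref{t3.2} and the $(\mc G)\Rightarrow(\mc H)$ direction of Theorem \ref{t1.1}, while the latter is what allows the reverse passage $(\mc H)\Rightarrow(\mc G)$ in Theorem \ref{t1.1}. This is precisely why the critical growth $(F)$ and the extra hypothesis $u\in L^{2n/(n-2)}(\Om)$ in $(a)$ cannot be dropped: for large $\gamma>0$ one generally only has $u\in H^1_{\loc}(\Om)\cap L^1(\Om)$, and without $u\in L^{2n/(n-2)}(\Om)$ one cannot pair $f(x,u)$ against test functions in $H_0^1(\Om)$, invalidating the whole reduction.
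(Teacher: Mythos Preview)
Your proposal is correct and follows essentially the same route as the paper's own proof: both directions are reduced, via Theorems \ref{t3.2} and \ref{t1.1}, to the equivalence between the Szulkin critical-point condition for $\Psi=K+J$ at $z=u-u_0$ and the $\Phi_w$-minimality of $u$ with $w=f(x,u)$. Your write-up is in fact more explicit than the paper's in two places---the bridging identity $\Phi_w(u_0+\eta)=K(\eta)-\ld w,\eta\rd$ and the verification that $f(x,u)\in L^{2n/(n+2)}(\Om)\hookrightarrow H^{-1}(\Om)$ from the growth $(F)$ and $u\in L^{2n/(n-2)}(\Om)$---but the underlying argument is the same.
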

		\begin{proof}
        Suppose $u$ satisfies $(a)$.
        Let $w=f(x,u)=f_1(x,u-u_0)$. Then $w \in H^{-1}(\Om)$. By Theorems \ref{t1.1} and \ref{t3.2} we have that $u =u_0 + H_0^1(\Om)$ and $u$ minimizes $\Phi_w$ defined in \eqref{e3.11}, i.e. for all $v \in H_0^1(\Om)$ we have
		\begin{align}\nonumber\label{e3.40}
			&\frac{1}{2 }\I\Om|\na v|^2dx + \frac{1}{2}\iint_{\mb R^{2n}}|\mc B(v)(x,y)|^2\,d\nu +\I\Om J_0(x,v)dx\\ \nonumber
			\geq& \frac{1}{2 }\I\Om|\na (u-u_0)|^2dx + \frac{1}{2}\iint_{\mb R^{2n}}|\mathcal{B}(u-u_0)(x,y)|^2\,d\nu +\I\Om J_0(x,u-u_0)dx\\
			&-\ld J^\prime(u-u_0),v-(u-u_0)\rd,
		\end{align}
		that is 
		\begin{align*}
			\ld J^\prime(u-u_0),v-(u-u_0))\rd + K(v)-K(u-u_0) \geq 0,
		\end{align*}
		where $K$ is defined by \eqref{eDK}.
		Recalling \eqref{edpsi}, $u-u_0$ is a critical point of $\Psi$ (see \eqref{edpsi}) in the sense of Definition \ref{dcp}. This proves that $u$ satisfies $(b)$.\\
		Conversely, assume that $u$ satisfies $(b)$. Then $u$ satisfies \eqref{e3.40} and using Proposition \ref{p3.1} we deduce that $u \in H^1_{\text{loc}}(\Om) \cap L^{\frac{2n}{n-2}}(\Om)$. Therefore $w =f(x,u)=f_1(x,u-u_0) \in H^{-1}(\Om)\cap L^1_{\text{loc}}(\Om)$. By Theorems \ref{t1.1} and \ref{t3.2} we conclude that $u$ is a weak solution to \eqref{emdp}. This concludes the proof.
        \end{proof}

	\section*{Acknowledgement} P. Garain gratefully acknowledges IISER Berhampur for the seed grant: IISERBPR/RD/OO/2024/15, Date: February 08, 2024. G. C. Anthal also appreciates the financial support provided by IISER Berhampur.

\end{document}